\newcommand*{\const}{\mathrm{const}}
\newcommand*{\id}{\mathrm{id}}
\newcommand*{\vol}{\mathrm{vol}}
\newcommand*{\CP}{\mathbb{CP}}
\newcommand*{\C}{\mathbb{C}}
\newcommand*{\Quat}{\mathbb{H}}
\newcommand*{\N}{\mathbb{N}}
\newcommand*{\R}{\mathbb{R}}
\renewcommand*{\S}{\mathbb{S}}
\newcommand*{\T}{\mathbb{T}}
\newcommand*{\Z}{\mathbb{Z}}
\newcommand*{\Mass}{\mathbf{M}}
\newcommand*{\SO}{\mathsf{SO}}
\newcommand*{\Cont}{\mathcal{C}}
\newcommand*{\Haus}{\mathcal{H}}
\renewcommand*{\Re}{\operatorname{Re}}
\newcommand*{\dd}{\mathrm{d}}
\newcommand*{\Dd}{\mathrm{D}}
\newcommand*{\del}{\partial}
\newcommand*{\eps}{\varepsilon}
\newcommand*{\into}{\hookrightarrow}
\newcommand*{\weakto}{\rightharpoonup}
\newcommand*{\Grass}{\Gr_2^+(\R^4)}
\newcommand*{\TAC}{\mathcal{T}}
\newcommand*{\AG}{\mathcal{AG}}
\newcommand{\extpow}{\textstyle{\bigwedge}}
\DeclareMathOperator{\Area}{Area}
\DeclareMathOperator{\Gr}{Gr}
\DeclareMathOperator{\Jac}{Jac}
\DeclareMathOperator{\dist}{dist}
\DeclareMathOperator{\dvol}{dvol}
\DeclareMathOperator{\graph}{graph}
\DeclareMathOperator{\res}{\mathbin{\mbox{\Large$\llcorner$}}}
\DeclareMathOperator{\spt}{spt}
\DeclareMathOperator{\Span}{span}
\DeclarePairedDelimiter\DBrack{\llbracket}{\rrbracket}
\newtheorem{thm}{Theorem}[section]
\newtheorem{cor}[thm]{Corollary}
\newtheorem{lemma}[thm]{Lemma}
\newtheorem{prop}[thm]{Proposition}
\theoremstyle{definition}
\newtheorem{defn}[thm]{Definition}
\newtheorem{rmk}[thm]{Remark}
\renewcommand\th@plain{\slshape}
\numberwithin{equation}{section}
\newlist{steps}{enumerate}{1}
\setlist[steps, 1]{label={\underline{{Step} \arabic*.}}, leftmargin=*, wide, labelindent=0pt, itemsep=5pt}
\author{Gerard Orriols}
\address{ETH, Rämistrasse 101, 8092 Zürich, Switzerland}
\email{gerard.orriols@math.ethz.ch}
\author{Tristan Rivi\`ere}
\address{ETH, Rämistrasse 101, 8092 Zürich, Switzerland}
\email{tristan.riviere@math.ethz.ch}
\date{June 5, 2025}
\title{Minimizing the Gauss map area of surfaces in $\S^3$}
\begin{document}
\maketitle
\begin{abstract}
  We establish the lower bound of $4\pi(1+g)$ for the area of the Gauss map of any immersion of a closed oriented surface of genus $g$ into $\S^3$,
  taking values in the Grassmannian of $2$-planes in $\R^4$.
  This lower bound is proved to be optimal for any genus $g \in {\N}$ but attained only when $g=0$. For $g \neq 0$ we describe the behavior of any minimizing sequence of embeddings: we prove that, modulo extraction of a subsequence, the surfaces converge in the Hausdorff distance to a round sphere $S$, and the integral cycles carried by the Gauss maps split into $g+1$ spheres, each of area $4\pi$; one of them corresponds to the cycle carried by the Gauss map of $S$, while the other $g$ arise from the concentration of negative Gauss curvature at $g$ points of $S$.
  
The results of this paper are used by the second author to define a nontrivial homological 4-dimensional min-max scheme for the area of Gauss maps of immersions into ${\mathbb S}^3$ in relation to the Willmore conjecture.
\end{abstract}

\section{Introduction}

Let $\Sigma$ be a closed oriented surface of genus $g \geq 0$ and $\phi : \Sigma \to \R^3$ be an immersion with Gauss (or unit normal) map $\nu : \Sigma \to \S^2$. The area covered by $\nu$ is given by the integral of the total curvature of this immersion:
\[
  \Area(\nu) = \int_{\Sigma} |K_\phi| \dvol_\phi,
\]
where $K_\phi$ is the Gauss curvature defined by
\[
  \nu^* \vol_{\S^2} = K_\phi \vol_{\phi},
\]
and $\vol_{\S^2}$ is the standard volume form on $\S^2$. A well known inequality going back at least to Chern and Lashof \cite{chern-lashof2} asserts that for any such immersion,
\begin{equation}
  \label{int-1}
  \int_{\Sigma}|K_\phi| \, \dvol_\phi \ge 4\pi\,(1+g),
\end{equation}
and moreover, for any genus $g$, equality in \eqref{int-1} holds for a large class of embeddings, which later became known as \emph{tight} (see \cite{cecil-ryan} for an introduction on the subject). A classical proof of \eqref{int-1} goes through the introduction of the function $h_e : x \in \Sigma \mapsto \phi(x) \cdot e \in \R$ for a fixed choice of $e \in {\mathbb S}^2$, called ``height function''. Such a function $h_e$ happens to be Morse for almost every $e\in \S^2$, and thus by the classical Morse inequalities (see for instance Section~5 in \cite{milnor-morse-theory}), it has at least $2g+2$ critical points, which are the preimages of $\pm e$ by $\nu$. One then concludes by the coarea formula.

Another (related) approach to this inequality is due to Willmore \cite{willmore-conjecture} and consists in first showing that
\begin{equation}
  \label{int-2}
  \int_{\Sigma} K_\phi^+ \, \dvol_\phi \geq 4\pi,
\end{equation}
where $K_\phi^+$ is the positive part of the Gauss curvature. Combining \eqref{int-2} with the Gauss--Bonnet identity one immediatly obtains the Chern--Lashof inequality \eqref{int-1}. The reason why \eqref{int-2} is true is quite geometric: by translating parallelly any plane coming from infinity one must hit for the first time the immersion at a point where $K_\phi = \nu_\phi^\ast \vol_{\S^2}\ge 0$. Hence for any $e\in \S^2$ there is at least one point $x\in \Sigma$ at which $\nu(x)=e$ and $K_\phi(x) \ge 0$, and one again concludes thanks to the coarea formula.

This argument shows easily that equality holds in \eqref{int-2}, and hence in \eqref{int-1}, if and only if the set where $K_\phi > 0$ is contained in the boundary of the convex hull of $\phi(\Sigma)$. The map $\nu$ restricted to these points covers $\S^2$ positively and exactly once, and on the rest of $\Sigma$ it covers $\S^2$ negatively and exactly $g$ times.

The goal of the present paper is to extend this result to immersions into $\S^3$ and to prove a \emph{rigidity theorem} for surfaces that (almost) saturate the inequality, which extends the decomposition into ``positive'' and ``negative spheres'' mentioned above but in a geometrically more involved setting.

\medskip

For surface immersions into ${\R}^n$ where $n$ is arbitrary (i.e.~in codimension $n-2$), Chern and Lashof gave a counterpart to the inequality \eqref{int-1}. More precisely, they considered the unit sphere bundle $U\Sigma$ inside the normal bundle of $\phi(\Sigma)$ in ${\R}^n$,
\[
  U\Sigma:=\left\{ (x,\xi)\in \Sigma\times {\mathbb S}^{n-1} \, : \, \xi\in (\phi_\ast T_x\Sigma)^\perp \right\}.
\]
This defines an $\S^{n-3}$-bundle over $\Sigma$ and thus $U\Sigma$ is a closed $(n-1)$-dimensional manifold, equipped with a tautological map
\begin{equation}
  \label{eq:def-F}
  F : (x, \xi) \in U\Sigma \longmapsto  \xi \in {\mathbb S}^{n-1}.
\end{equation}
A computation using the Gauss--Bonnet theorem shows that
\begin{equation}
  \label{int-2-b}
  \int_{U\Sigma} F^\ast \vol_{\S^{n-1}}=|{\mathbb S}^{n-1}|\, (2-2g),
\end{equation}
or in other words, the topological degree of $F$ equals $2-2g$. Chern and Lashof consider the absolute value of the area of this map $F$ given by
\[
  \TAC(\phi):=\int_{U\Sigma} |F^\ast \vol_{{\mathbb S}^{n-1}}| \, \dvol_{U\Sigma}=\int_{U\Sigma} \Jac(F) \, \dvol_{U\Sigma}\ ,
\]
where $ \Jac(F)$ is the Jacobian of $F$, and they prove the following lower bound\footnote{Observe that for $n=3$, for an oriented closed surface $\Sigma$ of genus $g$, $U\Sigma\simeq \Sigma\times \S^0$ is a double cover of $\Sigma$ locally isometric to $\Sigma$ and therefore
\[
\int_{U\Sigma} |F^\ast \vol_{\S^{n-1}}|
= 2\, \int_\Sigma |\nu^\ast \vol_{\S^2}|
.
\]  } for any immersion $\phi$ of a closed oriented surface $\Sigma$ of genus $g$:
\begin{equation}
\label{int-3}
\TAC(\phi)\ge |{\mathbb S}^{n-1}|\, (2 g+2)
\end{equation}
A proof of this inequality generalizing Willmore's approach to arbitrary codimension is given in \cite[Theorem X.2]{riviere-conformally-invariant-problems}. It amounts to showing that here again there must be a double positive covering by $F$ of the unit sphere ${\mathbb S}^{n-1}$. The result then follows from \eqref{int-2-b}.

\medskip

In the present paper we shall focus on the case where $\phi$ is an immersion into the 3-sphere ${\mathbb S}^3$ and denote by $\nu$ the unit normal vector map $\nu(x) \in T_{\phi(x)}{\mathbb S}^3\cap (\phi_\ast T_x\Sigma)^\perp$ compatible with the orientation of $\Sigma$. The Gauss map $G_\phi$ assigns to any $x\in \Sigma$ the oriented $2$-plane in $\R^4$ \emph{normal}\footnote{This is essentially equivalent to considering the tangent planes, but notationally more convenient.} to the immersed surface at $\phi(x)$. Explicitly, denoting the space of oriented $2$-planes in ${\R}^4$ by $\Gr_2^+({\R}^4)$, we have
\[
  G_\phi : x\in\Sigma \longmapsto G_\phi(x) := \phi(x) \wedge \nu(x) \in \Grass.
\]
The space $\Gr_2^+({\R}^4)$ of oriented $2$-planes in ${\R}^4$ (i.e.~the space of unit simple $2$-vectors) can be identified with $ \tfrac{1}{\sqrt{2}}\,({\mathbb S}^2\times{\mathbb S}^2)\subset {\mathbb S}^5$ through the following isometry:
\begin{equation}
  \label{eq:quaternion-coordinates}
  I : a\wedge b\in \Gr_2^+({\R}^4)\ \longmapsto \ 
  \frac{1}{\sqrt{2}}\,\left(
    \begin{pmatrix}
      \langle \bm{i} \bm{a}, \bm{b} \rangle \\
      \langle \bm{j} \bm{a}, \bm{b} \rangle \\
      \langle \bm{k} \bm{a}, \bm{b} \rangle
    \end{pmatrix}
    ,
    \begin{pmatrix}
      \langle \bm{a} \bm{i}, \bm{b} \rangle \\
      \langle \bm{a} \bm{j}, \bm{b} \rangle \\
      \langle \bm{a} \bm{k}, \bm{b} \rangle
    \end{pmatrix}
  \right)
  \in \S^2 \left(\tfrac{1}{\sqrt{2}} \right) \times \S^2 \left(\tfrac{1}{\sqrt{2}} \right).
\end{equation}
where $\bm{a}$ and $\bm{b}$ are respectively the quaternionic representatives of $a$ and $b$ (see \cref{sec:prelim}). This identification gives a convenient way of writing the image of the Gauss map of a round sphere (see \cref{prop:graph-rotation-plane}): if $S = \del B_r(p) \subset \S^3$ for some $p \in \S^3$ and $0 < r < \pi$, then
\[
  \Gamma_p := G_S(S) = \{ p \wedge v : v \in \S^3 \cap p^\perp \} \subset \Grass
\]
is identified via $I$ with
\begin{equation}
  \label{eq:gauss-map-round-sphere}
  I(\Gamma_p) = \graph(R_p) = \left\{ (z, R_p z) : z \in \S^2 \left(\tfrac{1}{\sqrt{2}} \right) \right\}
  \subset \S^2 \left(\tfrac{1}{\sqrt{2}} \right) \times \S^2 \left(\tfrac{1}{\sqrt{2}} \right),
\end{equation}
where $R_p \in \SO(3)$ is given by the double cover $p \in \S^3 \to R_p \in \SO(3)$ made explicit in \eqref{eq:rotation-quaternions}.

The area of $G_\phi$ as a map from $\Sigma$ into $\Grass$ is then given by
\[
  \AG(\phi):=\Area(G_\phi)=\int_\Sigma\sqrt{(1+\kappa_1^2)(1+\kappa_2^2)} \, \dvol_\phi,
\]
where $\kappa_1$ and $\kappa_2$ are the principal curvatures of $\phi(\Sigma)$, that is, the eigenvalues of the second fundamental form of the immersion $\phi$ viewed as a codimension one immersion into ${\mathbb S}^3$.

\subsection{Main results}
Our first result is an optimal lower bound for the area of $G_\phi$ for any genus:
\begin{thm}
\label{thm:area-gauss-bound}
Let $\phi$ be an immersion of a closed oriented surface $\Sigma$ of genus $g$ into $\S^3$. Then the following inequality holds:
\begin{equation}
  \label{eq:area-tac-genus-bound}
  \AG(\phi) = \Area(G_\phi) \ge \frac{\TAC(\phi)}{\pi},
\end{equation}
and in particular
\begin{equation}
  \label{eq:area-gauss-bound}
  \Area(G_\phi) \ge 4\pi (1+g).
\end{equation}
Equality in \eqref{eq:area-tac-genus-bound} (and thus also in \eqref{eq:area-gauss-bound}) is true if and only if $g=0$ (i.e.~$\Sigma \simeq {\mathbb S}^2$) and $\phi$ parametrizes a round sphere, that is, a set of the form $\del B_r(p)$. In this case, there exists a diffeomorphism $\iota : \S^2 \to \Sigma$ such that
\[
  I \circ G_\phi \circ \iota(z) = \frac{1}{\sqrt{2}} (z, R_p(z)) \qquad \forall z \in \S^2.
\]
\end{thm}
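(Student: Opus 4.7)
The plan is to establish the pointwise estimate
\[
\int_0^{2\pi}\Jac(F)(x,\theta)\,d\theta \le \pi\sqrt{(1+\kappa_1(x)^2)(1+\kappa_2(x)^2)},
\]
integrate it over $\Sigma$ to obtain \eqref{eq:area-tac-genus-bound}, and then invoke the Chern--Lashof bound \eqref{int-3} (specialized to $n=4$, so $|\S^3|=2\pi^2$) to deduce \eqref{eq:area-gauss-bound}.

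For the Jacobian, I fix $x\in\Sigma$ and a local orthonormal principal frame $\{e_1,e_2\}$ of $T_x\Sigma$. Since $\phi$ takes values in $\S^3\subset\R^4$, the normal 2-plane $(\phi_\ast T_x\Sigma)^\perp$ in $\R^4$ is spanned by $\phi(x)$ and $\nu(x)$, so the $\S^1$-fiber of $U\Sigma$ above $x$ is parametrized by $\xi(\theta)=\cos\theta\,\phi(x)+\sin\theta\,\nu(x)$. Using the $\R^4$-identities $D_{e_i}\phi=e_i$ and $D_{e_i}\nu=-\kappa_i\,e_i$ (the $\phi$-component of $D_{e_i}\nu$ vanishes because $\nu\perp\phi$), one computes
\[
F_\ast e_i = (\cos\theta-\kappa_i\sin\theta)\,e_i,\qquad F_\ast\partial_\theta = -\sin\theta\,\phi(x) + \cos\theta\,\nu(x),
\]
three mutually orthogonal vectors in $T_{\xi(\theta)}\S^3\subset\R^4$. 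Hence $\Jac(F)(x,\theta)=|(\cos\theta-\kappa_1\sin\theta)(\cos\theta-\kappa_2\sin\theta)|$, a formula symmetric in $\kappa_1,\kappa_2$ and smooth even at umbilic points. Since $\int_0^{2\pi}(\cos\theta-\kappa_i\sin\theta)^2\,d\theta = \pi(1+\kappa_i^2)$, Cauchy--Schwarz in $\theta$ yields the advertised pointwise bound.

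For the equality case, the Cauchy--Schwarz identity $|\cos\theta-\kappa_1\sin\theta|=c\,|\cos\theta-\kappa_2\sin\theta|$ for a.e.\ $\theta$ is seen, by expanding the squares in the basis $\{1,\cos 2\theta,\sin 2\theta\}$, to force $c=1$ and $\kappa_1(x)=\kappa_2(x)$. Thus equality in \eqref{eq:area-tac-genus-bound} forces $\phi(\Sigma)$ to be totally umbilic in $\S^3$; by Codazzi the common principal curvature is locally, hence globally, constant, so $\phi(\Sigma)$ is a round sphere $\partial B_r(p)$ for some $r\in(0,\pi)$. As $\partial B_r(p)\simeq\S^2$ is simply connected, the immersion $\phi$ is a covering map onto $\partial B_r(p)$, hence a diffeomorphism, forcing $g=0$. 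Conversely, round spheres saturate both \eqref{int-3} and \eqref{eq:area-tac-genus-bound}: writing $q=\cos r\,p+\sin r\,w$ with $w\in\S^3\cap p^\perp$ one finds $\nu(q)=(\cos r\,q-p)/\sin r$ and thus $G_\phi(q)=p\wedge w\in\Gamma_p$, so $G_\phi(\Sigma)=\Gamma_p$; the explicit form of $\iota$ then drops out of the identification \eqref{eq:gauss-map-round-sphere}. The main subtleties I foresee are the Cauchy--Schwarz equality analysis and the invocation of the classical Codazzi-based fact that a closed totally umbilic surface in $\S^3$ is a round sphere; the Jacobian calculation and the application of Chern--Lashof are routine.
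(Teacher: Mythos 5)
Your proposal is correct and takes essentially the same route as the paper: compute $\Jac(F)$ in a principal orthonormal frame, bound $\int_{\S^1}\Jac(F)\,\dd\theta$ by $\pi\sqrt{(1+\kappa_1^2)(1+\kappa_2^2)}$ via Cauchy--Schwarz, invoke Chern--Lashof, and characterize equality via umbilicity. The only cosmetic difference is that the paper substitutes angles $\tilde\theta_i$ with $\cot\tilde\theta_i=\kappa_i$ before applying Cauchy--Schwarz, whereas you apply it directly to $\cos\theta-\kappa_i\sin\theta$ and then do the Fourier-coefficient analysis for equality; the underlying computation is identical.
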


Even though equality is only attained for $g = 0$, the bound is optimal for every genus. This can be easily seen by transplanting tight surfaces from $\R^3$ into a neighborhood of a point in $\S^3$ (see \cref{prop:concentrating-sequence}), but we can also have a more interesting behavior:
\begin{thm}
  \label{thm:concentration-handles}
  Given an integer $g > 0$ and $g$ points $p_1, \ldots, p_{g}$ that lie on a round $2$-sphere $S = \del B_r(p) \subset \S^3$, there exists a sequence of smooth embedded surfaces $\Sigma_j \subset \S^3$ of genus $g$ for which the Gauss map area measure $\gamma_j := \sqrt{1 + \kappa_1^2}\sqrt{1 + \kappa_2^2} \, \Haus^2 \res \Sigma_j$ satisfies
  \begin{equation}
    \label{eq:concentration-gauss-measure}
    \gamma_j
    \weakto \gamma_S + \sum_{k=1}^{g} 4\pi \delta_{p_k}
    = 4 \pi \frac{\Haus^2 \res S}{\Haus^2(S)} + \sum_{k=1}^{g} 4\pi \delta_{p_k}.
  \end{equation}
  as $j \to \infty$ in the sense of measures. In particular $\AG(\Sigma_j) = \gamma_j(\S^3) \searrow 4\pi(g + 1)$, so $\{\Sigma_j\}$ is a minimizing sequence for $\AG$. Moreover the pushforward integral currents satisfy
  \begin{equation}
    \label{eq:convergence-currents-example}
    (I \circ G)_*\DBrack{\Sigma_j} \weakto \DBrack{\graph(R_{p_0})} - \sum_{k=1}^{g} \, \DBrack{\graph(R_{p_k})}.
  \end{equation}
\end{thm}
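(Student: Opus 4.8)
The statement is a construction, so the "proof'' is a surgery on the round sphere together with a verification of the three convergences. The plan is to build $\Sigma_j$ by excising $g$ tiny geodesic disks of $S=\del B_r(p)$, one around each $p_k$, and replacing each by a small, highly curved \emph{handle patch} of genus one with one boundary circle, scaled to size $\rho_j\to 0$. The guiding principle is that a surface patch contained in a ball $B_{\rho}(p_k)$ contributes, in the limit $\rho\to 0$, a Gauss map area equal to $\int|K|\,\dvol$ of a fixed model (for a rescaling $\Sigma\mapsto\rho\Sigma$ one has $\int_{\rho\Sigma}\sqrt{(1+\kappa_1^2)(1+\kappa_2^2)}\,\dvol\to\int_{\Sigma}|\kappa_1\kappa_2|\,\dvol$ as $\rho\to 0$, the ambient $\S^3$ being Euclidean to order $O(\rho^2)$ in geodesic normal coordinates at $p_k$), and a Gauss current concentrated on $\Gamma_{p_k}=\graph(R_{p_k})$ with multiplicity the degree of its normal map. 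A handle of genus one glued smoothly along a small geodesic circle of $S$ (which bounds a disk of total geodesic curvature $\to 2\pi$ with nearly zero enclosed curvature) has, by Gauss--Bonnet, total curvature $\to -4\pi$; hence $\int|K|\,\dvol\ge 4\pi$, with equality precisely when the patch is asymptotically non-positively curved. It is this surgery that we want to realize.

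\medskip

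\emph{Step 1: the model handle.} I would first establish a lemma: there is a smooth compact embedded surface $\Sigma_0\subset\R^3$ of genus one with a single boundary circle such that (i) $\Sigma_0$ coincides with the plane $\{x_3=0\}$ in a neighborhood of $\partial\Sigma_0$, which is a round circle; (ii) $\Sigma_0$ is contained in the solid cylinder over the disk bounded by $\partial\Sigma_0$ and meets the plane only in an annulus near $\partial\Sigma_0$; (iii) the Gauss curvature satisfies $K\le 0$ everywhere, so that (since $\int_{\Sigma_0}K=2\pi\chi-\int_{\partial}k_g=-2\pi-2\pi=-4\pi$ is forced) $\int_{\Sigma_0}|K|\,\dvol=4\pi$. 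Such a $\Sigma_0$ can be assembled by hand from pieces of planes, right circular cylinders (flat, $K=0$), and the non-positively curved ``inner'' quarter-pieces of tori of revolution used as concave fillets: starting from the plane one digs a cylindrical well, adds a handle across the well by joining two holes in its floor with a concave catenoid-like neck, and closes everything up with concave fillets, checking at each junction that one stays in the regime $\kappa_1\kappa_2\le 0$ and that embeddedness and the cylinder confinement (ii) are preserved.

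\medskip

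\emph{Step 2: assembly and convergence.} Working in geodesic normal coordinates at each $p_k$, define $\Sigma_j$ by removing from $S$ the geodesic disks of radius $\sim\rho_j$ about the $p_k$ and inserting the rescaled, transplanted copies of $\Sigma_0$; for $\rho_j$ small the result is smooth, embedded, and of Euler characteristic $2-2g$, hence of genus $g$. Since each inserted patch lies in $B_{C\rho_j}(p_k)$, one gets $\Sigma_j\to S$ in Hausdorff distance. For the Gauss map area, split $\gamma_j$ into the contribution of $S$ minus the excised disks, which converges to $\gamma_S$ (total mass $\to\AG(S)=4\pi$ by the equality case of \cref{thm:area-gauss-bound}), and the $g$ handle contributions, each of which by the rescaling principle and dominated convergence has total mass $\to\int_{\Sigma_0}|K|=4\pi$ and support shrinking to $\{p_k\}$; thus $\gamma_j\weakto\gamma_S+\sum_k 4\pi\delta_{p_k}$ and $\AG(\Sigma_j)\to 4\pi(g+1)$, which combined with the lower bound \eqref{eq:area-gauss-bound} gives $\AG(\Sigma_j)\searrow 4\pi(g+1)$. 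For the currents, $(I\circ G)_*\DBrack{S\setminus\bigcup_k D_k}\to(I\circ G)_*\DBrack{S}=\DBrack{\graph(R_{p_0})}$ by \eqref{eq:gauss-map-round-sphere} and \cref{prop:graph-rotation-plane}, while each handle's pushforward current is supported in a shrinking neighborhood of $\graph(R_{p_k})$, has mass $\to 4\pi$ and boundary $\to 0$; by the constancy theorem its limit is $m_k\DBrack{\graph(R_{p_k})}$ with $|m_k|\le 1$, and since the degree of the first-factor component of $I\circ G$ restricted to a handle equals $\tfrac{1}{4\pi}\int K\,\dvol\to-1$, we get $m_k=-1$, which proves \eqref{eq:convergence-currents-example}.

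\medskip

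\emph{Main obstacle.} The technical heart is Step~1: producing an embedded genus-one handle with non-positive Gauss curvature that closes up onto a round circle in a flat plane and stays inside the cylinder over that circle, so that no ``extra'' Gauss map area is spent at the surgery annulus or in bending the handle back to the plane. The subtlety is precisely that turning a surface normal through a fixed solid angle already costs a definite amount of Gauss map area, so the $4\pi$ budget per handle is tight and the model must be engineered to spend it entirely on genuine (non-positive) curvature. Secondary nuisances are keeping track of the $O(\rho_j^2)$ discrepancy between $\S^3$ and its tangent space throughout the estimates, and, if some of the $p_k$ coincide, placing the corresponding handles side by side so that their measures and currents simply add.
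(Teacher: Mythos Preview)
Your overall strategy is the paper's: glue one handle patch near each $p_k$ onto the round sphere $S$, arrange that each handle contributes $4\pi+o(1)$ of Gauss-map area, and verify the three convergences. Step~2 is correct and in fact more explicit than the paper's verification (your use of the constancy theorem and the degree computation via $\tfrac{1}{4\pi}\int K$ is a clean way to identify the limiting current on each handle).

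The gap is in Step~1. In your well-with-a-floor model, the fillet between the vertical wall $\{r=1\}$ and the horizontal floor $\{z=-d\}$ is a quarter of a torus of revolution whose center circle sits at $r=1-\epsilon$; the piece you use lies at $r>1-\epsilon$, i.e.\ on the \emph{outer} half of that torus, where $K>0$. (By contrast, the rim fillet plane--to--wall sits on the inner half of its torus and is fine.) So your $\Sigma_0$ does not have $K\le 0$ everywhere, hence $\int_{\Sigma_0}|K|>4\pi$, and rescaling a fixed such model produces handles whose Gauss-map area tends to that strictly larger number: you do not get a minimizing sequence. Dropping the floor and trying instead to connect two holes in the plane by a bent tube hits the same obstruction, since any $U$-bend of a round tube is half a torus with $K>0$ on the outside of the bend. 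Whether a single compact genus-one handle with $K\le 0$ and a flat planar collar exists at all is more delicate than your sketch suggests; you correctly flag this as the main obstacle, but the proposed assembly does not resolve it.

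The paper avoids the issue by using a one-parameter \emph{family} $H_h$ of handles (\cref{lem:low-curvature-handle}) rather than a single model: $H_h$ is built from the graph $G_h$ of a height-$h$ dumbbell bump---which does carry some positive curvature, but with $\int_{G_h}|K|\to 0$ as $h\to 0$---together with a $K\le 0$ annulus $C_h$ (\cref{lem:embedded-cylinder}) inserted between two slanted planar pieces of $G_h$, so that $\int_{H_h}|K|=4\pi+o_h(1)$. One then takes $h=h_j\to 0$ while simultaneously shrinking. Your Step~2 adapts verbatim to this setting: replace the fixed $\Sigma_0$ by $H_{h_j}$ and the rest of your argument goes through unchanged.
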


Observe that \eqref{eq:convergence-currents-example} could be interpreted as the $\S^3$ counterpart of the convergence to a \emph{tight} immersion for minimizing sequences of the Gauss map area of immersions into ${\R}^3$. In that case, exactly one positive bubble and $g$ negative ones are formed at the limit. 

The second main result of the present work shows that the behavior described in \cref{thm:concentration-handles} holds for general minimizing sequences. This gives a positive answer to the question formulated in \cite[Open Problem IV.1]{riviere-minmax-hierarchies}, assuming that the minimizing sequence consists of embeddings. More precisely:

\begin{thm}
  \label{thm:gauss-main}
  Let $\phi_j : \Sigma \to \S^3$ be a sequence of smooth immersions of a closed oriented surface $\Sigma$ of genus $g$, with normal maps $\nu_j : \Sigma \to \S^3$ and Gauss maps $G_j : \Sigma \to \Grass$. Suppose that $\AG(\phi_j) = \Area(G_j) \to 4\pi(1+g)$, which is the minimum in this class.

  Then, after passing to a subsequence, there exists a (possibly degenerate) round sphere $S \subset \S^3$ such that $\dist(\phi_j(\cdot), S) \to 0$ uniformly. If in addition the maps $\phi_j$ are embeddings, then $\phi_j(\Sigma) \to S$ in the Hausdorff distance
  and there exist $g$ points $p_1, \ldots, p_g \in S$ such that
  \begin{equation}
    \label{eq:convergence-cycles-1}
    (G_j)_\# \DBrack{\Sigma} \xrightharpoonup{\; j \to \infty \;} \DBrack{\Gamma_{p_0}} - \DBrack{\Gamma_{p_g}} - \cdots - \DBrack{\Gamma_{p_g}} 
  \end{equation}
  or equivalently
  \begin{equation}
    \label{eq:convergence-cycles-2}
    (I \circ G_j)_\# \DBrack{\Sigma} \xrightharpoonup{\; j \to \infty \;} \DBrack{\graph(R_{p_0})} - \DBrack{\graph(R_{p_1})} - \cdots - \DBrack{\graph(R_{p_g})}
  \end{equation}
  as integral cycles in $\S^2_+(1/\sqrt{2}) \times \S^2_-(1/\sqrt{2})$, where $p_0$ is the center of $S$.
  Moreover we have convergence of the corresponding integral varifolds in $\Grass$:
  \begin{equation}
    \label{eq:convergence-varifolds}
    \mathbf{v}(G_j(\Sigma))
    \xrightharpoonup{\; j \to \infty \;}
    \mathbf{v}(\Gamma_{p_0}) + \mathbf{v}(\Gamma_{p_1}) + \cdots + \mathbf{v}(\Gamma_{p_g})
  \end{equation}
  and measures on $\S^3$:
  \begin{equation}
    \label{eq:convergence-measures-s3}
    \gamma_j := \sqrt{1 + \kappa_1^2}\sqrt{1 + \kappa_2^2} \, \Haus^2 \res \phi_j(\Sigma)
    \xrightharpoonup{\; j \to \infty \;}
    \gamma_S + 4\pi \delta_{p_1} + \cdots + 4\pi \delta_{p_g}.
  \end{equation}
\end{thm}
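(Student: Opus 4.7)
The strategy has three stages: compactness to extract subsequential limits; rigidity to identify those limits as finite unions of round-sphere Gauss images; and a combination of topological bookkeeping and mass-budget arguments to fix signs and positions, and to upgrade to Hausdorff convergence when the immersions are embeddings.

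\textbf{Stage 1 (compactness).} The bound $\AG(\phi_j) \to 4\pi(1+g)$ uniformly controls the masses of $(G_j)_\#\DBrack{\Sigma}$, of the varifolds $\mathbf{v}(G_j(\Sigma))$, and of the measures $\gamma_j$. Federer--Fleming compactness for integral currents and Allard compactness for integer rectifiable varifolds then yield, up to subsequence, an integral cycle $T_\infty$ and a varifold $V_\infty$ in $\Grass$ together with a Radon measure $\gamma_\infty$ on $\S^3$, with $\|T_\infty\| \le \|V_\infty\|$ and total masses at most $4\pi(1+g)$. Combining \cref{thm:area-gauss-bound} with the Chern--Lashof inequality gives $\TAC(\phi_j) \to 4\pi(1+g)$ and forces the defect in \eqref{eq:area-tac-genus-bound} to vanish, so in fact $\Mass(V_\infty) = 4\pi(1+g)$ (no area is lost in the limit).

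\textbf{Stage 2 (structure of $V_\infty$).} The integrand $\sqrt{(1+\kappa_1^2)(1+\kappa_2^2)}$ provides pointwise control on the second fundamental form, which should enable an $\eps$-regularity statement: at every point $q \in \spt V_\infty$ whose $V_\infty$-mass on small balls is below a universal threshold, the $G_j$ converge smoothly in a neighborhood of $q$. At such points, a blow-up analysis produces a limit saturating the equality in \cref{thm:area-gauss-bound}, so by the rigidity portion of that theorem each smooth piece is locally a round-sphere Gauss graph $\graph(R_p)$. Outside a finite ``bubbling'' set where the density threshold is exceeded, one patches these local pieces into global round-sphere Gauss images $\Gamma_{p_k} = \graph(R_{p_k})$. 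Since each such component contributes exactly $4\pi$ to the total mass, and the total is $4\pi(1+g)$, there are precisely $1+g$ of them, with centers $p_0, p_1, \ldots, p_g \in \S^3$.

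\textbf{Stage 3 (signs, bubble positioning, Hausdorff convergence).} The homology class of $(G_j)_\#\DBrack{\Sigma}$ in $H_2(\Grass; \Z) \cong \Z^2$ is a topological invariant depending only on $g$, which one can compute on the concentrating sequences constructed in \cref{thm:concentration-handles}; this fixes the sign pattern in the decomposition as exactly one positive graph and $g$ negative ones, yielding \eqref{eq:convergence-cycles-1}--\eqref{eq:convergence-cycles-2}. Combined with $\|T_\infty\| = \|V_\infty\|$ this also forces \eqref{eq:convergence-varifolds}. Pushing forward to $\S^3$ gives \eqref{eq:convergence-measures-s3}, and reading the support of $\gamma_\infty$ yields $\dist(\phi_j(\cdot), S) \to 0$ uniformly for $S = \partial B_r(p_0)$; each bubble center $p_k$ must lie on $S$, since a bubble at a point off $S$ would leave a persistent gap in $\dist(\phi_j, S)$ incompatible with the saturated mass budget. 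In the embedded case, the same budget argument applied on small balls away from $S \cup \{p_1, \ldots, p_g\}$ rules out detached components and long thin tubes, upgrading uniform convergence to Hausdorff convergence $\phi_j(\Sigma) \to S$.

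\textbf{Main obstacle.} The crux is Stage 2: turning weak varifold convergence into a clean decomposition into round-sphere Gauss graphs requires both an $\eps$-regularity / no-neck analysis at the concentration points and a quantitative form of the rigidity in \cref{thm:area-gauss-bound} — so that ``almost-equality'' on small patches implies quantitative closeness to some $\graph(R_p)$. Neither ingredient follows immediately from the results stated so far, and I expect most of the technical work to be concentrated there.
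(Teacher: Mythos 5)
Your Stage~2 is where the argument breaks down, and it breaks in two ways.

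First, the paper has no $\eps$-regularity theorem for Gauss maps and deliberately avoids needing one. The integrand $\sqrt{(1+\kappa_1^2)(1+\kappa_2^2)}$ controls the second fundamental form of $\phi_j$ pointwise, but it does not give you an $\eps$-regularity statement for $G_j$ (or $\Lambda_j$) as a map; minimizing sequences constructed in \cref{thm:concentration-handles} can concentrate in arbitrarily degenerate ways (the paper remarks explicitly that ``the problem is very degenerate and convergence only in a very weak sense is expected''). More seriously, even if one had smooth subconvergence away from a finite set, a blow-up limit at a low-density point would only be a closed Lagrangian map into $\Grass$, not a priori a Gauss map of an immersion into $\S^3$. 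As the paper notes in \cref{sec:gauss-maps-lagrangian}, not every (even Legendrian) Lagrangian map is a Gauss map, and such maps need not obey \cref{thm:area-gauss-bound}. So the ``rigidity portion of \cref{thm:area-gauss-bound}'' cannot be invoked on blow-up limits: that rigidity applies only to immersions, not to the weak limits you produce.

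Second, the Federer--Fleming limit $T_\infty$ and varifold limit $V_\infty$ by themselves do not give the decomposition you need. For $g=1$ the current $(G_j)_\#\DBrack{\Sigma}$ is homologically trivial, so cancellation can destroy all current mass; and $V_\infty$, while mass-preserving, does not remember orientations, so it cannot distinguish the one positive bubble from the $g$ negative ones. Knowing the global homology class $(1-g,1-g)$ tells you the \emph{net} degree but not that the splitting is exactly $1$ positive and $g$ negative sheets (e.g.~$2$ positive and $g+1$ negative is not excluded by topology or by the mass total alone). What the paper actually does is quite different: after establishing uniform closeness of $\phi_j(\Sigma)$ to a sphere (via quantitative estimates on $\TAC$ vs.~$\AG$, \cref{lem:estimates-Sigma-pi} and \cref{cor:convergence-round-sphere}), it constructs an \emph{explicit} decomposition $(\Lambda_j)_\#\DBrack{\Sigma} = N_j^+ - N_j^-$ of the normal cycle, where $N_j^+$ is the normal cycle of a geodesically convex set $K_j$ (the set of points of the ``positive'' complementary region far from $\phi_j(\Sigma)$), and then shows $\Mass(N_j^\pm)$ converge to $4\pi$ and $4\pi g$. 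Federer--Fleming applied to $N_j^\pm$ separately gives limits in the known homology classes $(1,1)$ and $(g,g)$ saturating the mass bound, so they are calibrated by the K\"ahler form and hence holomorphic, and the Lagrangian constraint then forces them to be graphs of rotations. Your outline has no analogue of this convex-set decomposition, which is the core new mechanism of the proof, and the ``topological bookkeeping'' and ``mass budget'' in your Stage~3 presuppose a decomposition that you have not produced.

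Finally, the ``persistent gap / detached components'' heuristic in Stage~3 does not substitute for the embeddedness argument: the Hausdorff convergence $\phi_j(\Sigma)\to S$ in \cref{lem:hausdorff-convergence} uses the two complementary regions $\Omega_j^\pm$ and the balls inscribed in them, not a mass budget near $S$. The uniform convergence $\dist(\phi_j(\cdot),S)\to 0$ (which holds for immersions) also comes from those inscribed-ball estimates, not from reading off $\spt\gamma_\infty$.
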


\cref{thm:gauss-main} can be interpreted as a qualitative stability result accompanying \cref{thm:area-gauss-bound} (and indeed can be rewritten as a quantitative $\eps$-$\delta$ statement for the flat and $\mathbf{F}$ distance by using a standard compactness argument). 
The study of the qualitative and quantitative stability of optimal geometric inequalities in which the equality cases are well understood is a common and modern theme in Geometric Analysis---see for example \cite{fusco-maggi-pratelli, figalli-maggi-pratelli, de-lellis-muller1, lamm-schatzle-stability, inauen-marchese-stable-quantitative, de-philippis-maggi-stability-plateau, bernandmantel-muratov-simon, rupflin-quant-rigidity-degree, luckhaus-zemas} for a few recent instances.

In many of the cases mentioned above there are two ingredients that makes a qualitative stability result trivial and a sharp quantitative analysis possible: the existence of minimizers, and an \emph{a priori} compactness theorem that ensures that minimizing sequences converge to a minimizer. Instead, in our case (for $g > 0$) there are no minimizers at all, and general compactness theorems for the Gauss maps $G_j$ under just a control on the mass, such as the Federer--Fleming compactness theorem for the integral cycles $(G_j)_\# \DBrack{\Sigma}$, provide weak limits which escape from the class of objects for which we can prove the inequality (e.g.~for $g=1$ they could converge to zero as integral currents). Thus the main obstacle to overcome is obtaining some compactness for the minimizing sequence, which must come from a good understanding of the geometry of the problem. An added difficulty comes from the fact that this is a vectorial problem and the Gauss maps take values in a space of codimension two, so one has to work with weaker geometric objects and norms in comparison to other settings.

Regarding the optimality of our result, first observe that very little can be said about the immersion $\phi$ itself apart from the Hausdorff convergence to a sphere or a point. The convergence at the level of the Gauss map is more interesting because of the behavior stated in \cref{thm:gauss-main}, and more natural because it recovers all the mass at the limit, as with more classical bubbling phenomena. If one asks about improving the rate of convergence instead of the chosen norm, we do not know about the optimal one, but it is a priori not clear, since the limiting object is degenerate and singular.

\begin{rmk}
  The embeddedness assumption is necessary to obtain such a precise characterization of the limiting object: for example, it is easy to construct immersions $\phi : \T^2 \to \R^3$ of tori with $\TAC(\phi)$ arbitrarily close to $8\pi$, for which the normal map $\nu$ takes values in an arbitrarily small neighborhood of a hemisphere of $\S^2$. These look like the torus of revolution of a very thin figure-8 curve. Composing such maps $\phi$ with a dilation by a small factor and then the inverse stereographic projection $\R^3 \to \S^3$, one obtains minimizing sequences for $\AG$ consisting of immersions of tori such that the right hand side of \eqref{eq:convergence-varifolds} has, instead of $\mathbf{v}(\Gamma_{p_k})$, the varifold supported in a hemisphere of $\Gamma_{p_k}$ with multiplicity two.
\end{rmk}

To get an intuition of the contents of the theorems it may be useful to think about the case of curves in $\S^2$, where the analysis is much easier if one works with arc-length parametrizations. In particular, the lower bound for the length of the (tangent) Gauss map $G = \phi \times \nu$ of an immersion $\phi : \S^1 \to \S^2$ with unit normal $\nu$ and curvature $\kappa$ is given by $\int_{\S^1} \sqrt{1+\kappa^2} \dd \Haus^1 \ge 2\pi$, thanks to a classical inequality of Fenchel \cite{fenchel}, and equality holds if and only if $\phi(\S^1)$ is plane and convex, therefore a round circle. One can also see with some work that minimizing sequences of embedded curves must either converge to a round circle or concentrate towards a point, and $G_j(\S^1)$ converges in a suitable sense to an equatorial $\S^1 \subset \S^2$.

Finally we would like to remark that the analysis developed here serves as one of the first steps (to our knowledge) into a rigidity and stability analysis in the theory of \emph{tight} and \emph{taut} surfaces, which are minimizers of the total absolute curvature $\TAC$ among surfaces in $\R^3$ and $\S^3$, respectively. These are very rich classes of submanifolds and have been studied by many authors in the second half of the last century (see for example \cite{chern-lashof2, banchoff, pinkall-taut-dupin, kuiper-tight} and the book \cite{cecil-ryan}). Most of the literature of the subject is restricted to topological and geometric properties of \emph{minimizers}, and here we study \emph{almost} minimizers of the functional $\AG$, which is closely related to $\TAC$. It would be very interesting to understand when and in which sense almost minimizers of $\TAC$ are close to Dupin surfaces or spheres.

\subsection{Main ideas of the proof}
The first step towards the proof of \cref{thm:gauss-main} is a quantitative stability estimate that leads to the subsequential uniform convergence of the immersions to a round sphere.
For any $x\in \Sigma$, let $l(x)$ [resp.~$\tilde l(x)$] denote the length of the longest geodesic that intersects $\phi(\Sigma)$ orthogonally at $\phi(x)$ and such that the segments before and after $\phi(x)$ are minimizing [resp.~stable] as geodesics with one end in $\phi(\Sigma)$. It is easy to see that $0 < l(x) \leq \tilde l(x) \leq \pi$ for every $x$, and moreover, if $l(x) = \pi$ for some $x$, then $\Sigma$ is a round sphere.

In order to prove \cref{thm:gauss-main} our strategy consists in finding a sequence of points $x_j \in \Sigma$ such that  
\begin{equation}
  \label{int-7b}
  l(x_j) \rightarrow \pi. 
\end{equation}
The existence of this sequence comes from the following quantitative stability inequalities (see \cref{lem:estimates-Sigma-pi} and \cref{cor:convergence-round-sphere}): for each $g>0$ there exists $C = C(g)>0$ such that for any $0<\delta<1$, if
\[
  \Area(G_\phi) \le 4\pi (1+g)+\delta,
\]
then
\begin{equation}
  \label{int-8}
  \int_{\left\{ 0 < \tilde{l} < \pi/2 \right\}} \sqrt{(1+\kappa_1^2)(1+\kappa_2^2)} \, \dvol_\phi \ge 4\pi - C \delta
\end{equation}
and
\begin{equation}
  \label{int-9}
  \int_{\left\{ 0 < \tilde{l} < \pi/2 \right\}} (\pi - l)\sqrt{(1+\kappa_1^2)(1+\kappa_2^2)} \, \dvol_\phi \le C \delta^{1/3}
\end{equation}
(we do not know whether the exponent in \eqref{int-9} is optimal). By combining \eqref{int-8} and \eqref{int-9} we obtain a sequence of points such that \eqref{int-7b} holds, and therefore $\phi_j(\Sigma)$ converges to a possibly degenerate round 2-sphere in ${\mathbb S}^3$.

Then the challenge is to separate the Gauss map of the surface into a macroscopic positive curvature region, where the Gauss map covers one bubble of area approximately $4\pi$, and highly concentrated negative curvature regions, whose Gauss map form $g$ bubbles, each of area nearly $4\pi$ as well. In order to be able to apply the Federer--Fleming compactness theorem, we need to produce \emph{closed} integral currents that capture the image of the Gauss map restricted to these regions.

Recall that, for almost tight surfaces $\Sigma \subset \R^3$, we detected most of the positive curvature region as the contact set with planes coming from infinity in each direction; the set of the normal vectors to these planes forms a ``positive'' $\S^2$ that corresponds to the image by $\nu$ of the convex hull of the surface. The restriction of $\nu$ to the rest of the surface covers an area of $\int_\Sigma |K| - 4\pi$, which approaches $4\pi g$, and at the limit yields a current $g \DBrack{\S^2}$ with the negative orientation.

Unfortunately we cannot generalize this construction to surfaces in $\S^3$, because there is no satisfactory analog of the method of moving planes or of the convex hull. A related fact is that, in general, for a surface of positive genus in $\S^3$ the integral of the positive part of the Gauss curvature may be arbitrarily small (and in fact there are tori, like the Clifford torus, with zero Gauss curvature everywhere).

Nevertheless, there is a similar argument for $\Sigma \subset \R^3$ that we can mimic in $\S^3$. Consider the fattened set $\Sigma_{(d)} := \{p : \dist(p, \Sigma) = d \} \subset \R^3$ for $d > 0$. When $d$ is very large, $\Sigma_{(d)}$ is topologically a sphere, its total absolute curvature approaches $4\pi$, and the closest projection map $\pi$ from $\Sigma_{(d)}$ to $\Sigma$ preserves the normal map; in particular, $\nu$ restricted to $\pi(\Sigma_{(d)})$ defines a cycle with area close to $4\pi$ that captures most of the positive curvature of $\Sigma$.
The set $\pi(\Sigma_{(d)}) \subset \Sigma$ can also be characterized as the set of points $x \in \Sigma$ for which every point $x + t \nu(x)$ for $0 \leq t \leq d$ is at distance $t$ from $\Sigma$, and ``regularizes'' $\Sigma$ in the sense that it gets rid of the regions with very negative Gauss curvature.

Thanks to the geometric estimates sketched above, this construction can be adapted to surfaces in $\S^3$ and as a result we get the separate convergence of the ``positive'' and ``negative'' parts of the integral cycle carried by the Gauss map. The characterization of the limiting current as the sum of graphs of rotations comes from a short calibration argument.

\subsection{Application to min-max for Gauss maps and the Willmore problem}
In \cite{riviere-minmax-hierarchies} the second author conjectured that, modulo extraction of a subsequence, \eqref{eq:convergence-cycles-2} always hold true for \emph{any} minimizing sequence of $\Area(G_\phi)$ and fixed $\Sigma$. This question has been posed in relation to a new proof of the Willmore conjecture (after \cite{marques-neves-willmore}) via a min-max operation on the area of the Gauss map. More precisely, assuming  \eqref{eq:convergence-cycles-2} always holds true for {any} minimizing sequence of $\Area(G_\phi)$ one defines for any immersion of $\Sigma$ satisfying
\[
  \Area(G_{\phi}) \le 4\pi (1+g) + \eps
\]
and one can define the map $\mathcal P$, called \emph{polarization map}, which assigns the nearest positive bubble $R_{p_0} \in \SO(3)$ to $G_\phi$ appearing in \eqref{eq:convergence-cycles-2} (see \cite{riviere-minmax-hierarchies}). One then introduces the admissible family given by
\[
{\mathcal A}:=\left\{
\begin{array}{c}
 \Phi\in C^0_{\bf F}(\mbox{int}(X^4),\mbox{Imm}(\Sigma, {\mathbb S}^3))\ ; \ \exists \, G\in C^0_{\bf F}(X^4, \mbox{Gr}_2^+({\R}^4))\\[3mm]
 X^4\ \mbox{ is a  compact 4-dimensional polyhedron with boundary }\\[3mm]
 G=G_\Phi\quad\mbox{ in }\mbox{int}(X^4)\quad,\quad \mbox{Area }(G)\equiv 4\pi\, (1+g)\mbox{ on }\del X^4\\[3mm]
 ({\mathcal P}\circ G)_\ast\del X^4\ne 0\quad\mbox{ in }H_3(\SO(3),{\Z})
 \end{array}
 \right\}
\]
where $\bf F$ is the varifold distance. It is proved in \cite{riviere-minmax-hierarchies} that for any $g>0$ $\mathcal A\ne \varnothing$. Assuming then that \eqref{eq:convergence-cycles-2} always hold true for {any} minimizing sequence of $\Area(G_\phi)$  one can deduce that
\[
  \mbox{Width}_g:=\inf_{\Phi\in {\mathcal A}} \max \Area(G_\Phi)>4\pi\, (1+g)
\]
which makes the min-max operation well defined.

\subsection{Gauss maps in symplectic geometry}
\label{sec:gauss-maps-lagrangian}
An important motivation for us to study Gauss maps of genus $g$ surfaces in $\S^3$ is that, for $\phi$ in a regular homotopy class of immersions, the maps $G_\phi$ form an open subset (in the smooth topology) of an exact regular homotopy class of Lagrangian surfaces in $\S^2 \times \S^2$. Alternatively, one can see Lagrangian surfaces in this class as generalizations of Gauss maps of surfaces in $\S^3$, for which the immersion $\phi$ may become singular at some points but the normal map $\nu$ does not\footnote{These maps are sometimes called \emph{Lie immersions} in the literature, e.g.~in \cite{palmer-buckling}.}. We hope that this point of view, at least going back to \cite{palmer-buckling, palmer-hamiltonian-gauss-maps}, provides a concrete framework, more accessible from the point of view of classical differential geometry, to the study of these classes of surfaces, in particular with regard to metric questions like the area. See for example \cite{hind-spheres-s2-s2, torus-minimizer-japan, drgi-isotopy-s2-s2} for related questions.

Arbitrary Lagrangian maps $G : \Sigma \to \Grass$ are in general very far from being Gauss maps. A first necessary condition is that they admit a Legendrian lift into $V_2(\R^4)$, the Stiefel manifold of orthonormal $2$-frames in $\R^4$, which is a contact $5$-dimensional manifold forming a principal $\mathsf{U}(1)$-bundle $\pi : V_2(\R^4) \to \Grass$ with curvature $\omega$. The existence of this lift can be characterized by the fact that all the periods $\int_D \omega$ are multiples of $2\pi$, where $D$ is any immersed disk in $\Grass$ with boundary in $G(\Sigma)$.

Notice that this is a global property: a Lagrangian map $G$ is always locally exact and thus always admits a local Legendrian lift $\Lambda$. Furthermore, the area measures of $G$ and of $\Lambda$ agree, since the projection $\pi : V_2(\R^4) \to \Grass$ induces a linear isometry on the horizontal planes.

Now if $(a, b) : U \to V_2(\R^4)$ is a local lift of $G$ and $a$ happens to be an immersion into $\S^3$ (or equivalently the $2$-form $\star (\dd a \wedge \dd a \wedge a \wedge b)$ does not vanish), then $G$ is the Gauss map of $a$ with unit normal $b$. Therefore, provided that $G$ does admit a global Legendrian lift $(a, b)$, $G$ will be a Gauss map if and only if $a \cos \theta + b \sin \theta$ is an immersion for some constant $\theta$. This is not always the case, and in fact it is easy to construct Legendrian maps which do not obey \cref{thm:area-gauss-bound} and hence do not come from immersions---see \cite{orriols-phd} for more details.

We finish this subsection by noting that we do not know what is the lower bound for the area among Lagrangian immersions $\Sigma \to \Grass$ which are exact regular homotopic to the Gauss map $G_\phi$ of an immersion $\phi : \Sigma \to \S^3$.

\subsection{Structure of the paper}

In \cref{sec:prelim} we define the Gauss map and describe the relevant geometric properties of the Grassmannian of $2$-planes in $\R^4$, where it takes values. Next, in \cref{sec:area-gauss-tac}, we relate the area $\AG(\phi)$ of the Gauss map of an immersion $\phi$ to its total absolute curvature $\TAC(\phi)$ and prove \cref{thm:area-gauss-bound}. In \cref{sec:min-sequences} we discuss the optimality of the bound for $\AG$ and prove \cref{thm:concentration-handles}. Finally, in \cref{sec:compactness-gauss} we prove (a slightly stronger version of) \cref{thm:gauss-main}.

\section{Preliminaries}
\label{sec:prelim}

\subsection{The Grassmannian and Gauss maps}

Let $\Sigma$ be a closed orientable surface of genus $g \geq 0$, and let $\phi : \Sigma \to \S^3$ be a smooth immersion. We choose a global unit normal $\nu : \Sigma \to \S^3$ compatible with the orientation of $\Sigma$ and the standard orientation of $\S^3 \subset \R^4$, meaning that if $e_1, e_2$ is an oriented basis of $T_x \Sigma$, then $\{ \nu(x), \phi_\# e_1, \phi_\# e_2 \}$ is an oriented basis of $T_{\phi(x)} \S^3$ and thus $\{ \phi(x), \nu(x), \phi_\# e_1, \phi_\# e_2 \}$ is an oriented basis of $\R^4$.

Consider now $\phi$ as an immersion of $\Sigma$ into $\R^4$. We define its Gauss map as follows:
\begin{defn}
  \label{def:gauss-map}
  The \emph{(normal) Gauss map} of an oriented immersion $\phi : \Sigma \to \S^3$ is the map
  \begin{equation}
    \label{eq:def-gauss-map}
    x \in \Sigma \longmapsto G(x) = \phi(x) \wedge \nu(x) \in \Grass \subset \textstyle{\bigwedge^2} \left( \R^4 \right).
  \end{equation}
  Here we are identifying oriented planes in $\Grass$ with their orienting unit $2$-vectors in $\bigwedge^2 \R^4$, and under this definition, $G(x) = \star G_\text{tan}(x) = G_\text{tan}(x)^\perp$, where $G_\text{tan} : \Sigma \to \Grass$ is the tangent map (in local coordinates, $G_\text{tan}(x) = \partial_{x_1} \phi \wedge \partial_{x_2} \phi / |\partial_{x_1} \phi \wedge \partial_{x_2} \phi|$) and $\star$ is the Hodge star.
\end{defn}

Since $\star$ is an isometry, this is equivalent to the usual Gauss map, but this definition is much more convenient for computations. Moreover, this definition generalizes for surfaces without an orientation to a Gauss map on the unit normal bundle of $\Sigma$ inside $\S^3$, which is an oriented double cover of $\Sigma$; in fact, with the same definition, one also gets a Gauss map $G : \S(N \Sigma) \to \Gr_2^+(\R^{n+1})$ for immersions of surfaces $\phi : \Sigma \to \S^n \subset \R^{n+1}$, this time defined on the unit sphere bundle of the normal bundle $N \Sigma$ of $\Sigma$ in $\S^n$.

To proceed we will need some facts about the geometry of the Grassmannian.

\begin{prop}
  \label{prop:tangent-grass}
  The tangent space to $\Gr_2^+(\R^{n+1})$ at a point $\pi = a \wedge b$, where $|a| = |b| = 1$ and $a \cdot b = 0$, is
  \[
    T_{a \wedge b}\Gr_2^+(\R^{n+1}) = \left\{ a \wedge w + v \wedge b : v, w \in \pi^\perp \right\} \subset \extpow^2 \R^{n+1},
  \]
    and hence is isomorphic to $(\pi^\perp)^2 = \Span\{a, b\}^2$.
  \begin{proof}
  Given a curve $\pi(t) = a(t) \wedge b(t)$ such that $a(0) = a$, $b(0) = b$, and such that $a(t)$ and $b(t)$ are orthonormal for every $t$, we have that $ \left. \tfrac{\dd}{\dd t} \right|_{t=0} a(t) \wedge b(t) = a \wedge b'(0) + a'(0) \wedge b$. Let $\kappa := b'(0) \cdot a = -a'(0) \cdot b \in \R$ and define
    \[
      \tilde a(t) := a(t) \cos (\kappa t) + b(t) \sin (\kappa t), \qquad
      \tilde b(t) := -a(t) \sin (\kappa t) + b(t) \cos (\kappa t)
    \]
    so that $\tilde a(t)$ and $\tilde b(t)$ still form an orthonormal basis of $\pi(t)$ for all $t$. Then
    \[
      \tilde a'(0) = a'(0) + \kappa b
      \qquad \text{and} \qquad
      \tilde b'(0) = -\kappa a + b'(0),
    \]
    so it is clear that $\tilde a'(0) \cdot a = 0$, $\tilde b'(0) \cdot b = 0$, and in addition
    \[
      \tilde a'(0) \cdot b = a'(0) \cdot b + \kappa = 0
      \quad \text{and} \quad
      \tilde b'(0) \cdot a = -\kappa + b'(0) \cdot a = 0.
    \]
    This shows that $\pi'(0)$ has the claimed form, and it is easy to see that this representation is unique. Conversely, given $v, w \in \pi^\perp$, the curve of planes $\pi(t) := (a + v(t)) \wedge (b + w(t)) / \left|(a + v(t)) \wedge (b + w(t)) \right|$ satisfies $\pi'(0) = a \wedge w + v \wedge b$.
  \end{proof}
\end{prop}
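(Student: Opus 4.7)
The plan is to parametrize moving planes through orthonormal frames and exploit the $\SO(2)$-gauge freedom in the choice of such a frame. Any smooth curve $t \mapsto \pi(t) \in \Gr_2^+(\R^{n+1})$ with $\pi(0) = \pi = a \wedge b$ admits, on a neighborhood of $0$, a smooth positively oriented orthonormal frame $(a(t), b(t))$ with $a(0)=a$ and $b(0)=b$ (for instance by Gram--Schmidt applied to any smooth local basis). Its image in $\extpow^2 \R^{n+1}$ is $a(t) \wedge b(t)$, and differentiating at $t=0$ gives the candidate tangent vector $a'(0) \wedge b + a \wedge b'(0)$, so the whole question reduces to identifying which such sums actually arise.

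Differentiating the constraints $|a|^2 = |b|^2 = 1$ and $a \cdot b = 0$ at $t=0$ yields $a'(0) \perp a$, $b'(0) \perp b$, and $a'(0) \cdot b + a \cdot b'(0) = 0$. At this stage $a'(0)$ and $b'(0)$ may still have non-zero components along $b$ and $a$ respectively. To remove these I would replace $(a(t), b(t))$ by the rotated frame
\[
  \tilde a(t) = \cos(\kappa t)\, a(t) + \sin(\kappa t)\, b(t), \qquad \tilde b(t) = -\sin(\kappa t)\, a(t) + \cos(\kappa t)\, b(t),
\]
which still parametrizes the same curve of planes $\pi(t)$ and hence has the same image in $\extpow^2 \R^{n+1}$. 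Choosing $\kappa = -\,a'(0) \cdot b$ and using the constraint equations, one checks in one line that $\tilde a'(0), \tilde b'(0) \in \pi^\perp$, which gives the decomposition $a \wedge w + v \wedge b$ with $v = \tilde a'(0)$ and $w = \tilde b'(0)$.

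For the converse, I would take $v, w \in \pi^\perp$ and consider the curve $\pi(t) := \mathrm{span}(a + tv,\, b + tw)$, oriented by the $2$-vector $(a+tv) \wedge (b+tw)$; since this $2$-vector has norm $1 + O(t^2)$, the normalized curve in $\Gr_2^+(\R^{n+1})$ has tangent vector exactly $a \wedge w + v \wedge b$ at $t=0$. As a consistency check the linear map $(v,w) \mapsto a \wedge w + v \wedge b$ is clearly injective on $(\pi^\perp)^2$, and its image has dimension $2(n-1) = \dim \Gr_2^+(\R^{n+1})$, so the description is exhaustive. I do not anticipate any real obstacle: the only slightly delicate ingredient is the $\SO(2)$ gauge fixing that moves $a'(0), b'(0)$ into $\pi^\perp$, and this is a direct computation.
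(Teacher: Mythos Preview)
Your argument is correct and follows essentially the same route as the paper: an $\SO(2)$ rotation of the moving orthonormal frame by angle $\kappa t$ (with $\kappa = -a'(0)\cdot b = b'(0)\cdot a$) to push $a'(0), b'(0)$ into $\pi^\perp$, together with the explicit curve $(a+tv)\wedge(b+tw)$ for the converse. Your added dimension count makes explicit the uniqueness that the paper leaves implicit.
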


The manifold $\Gr_2^+(\R^{n+1})$, with the Euclidean metric induced from $\extpow^2 \R^{n+1}$, is naturally a K\"ahler manifold, and in fact it embeds in $\CP^n$ as a standard complex quadric (see \cite{chern-grassmannian} or \cite{orriols-phd} for more details). With the description of tangent spaces from \cref{prop:tangent-grass}, its symplectic form is simply
\begin{equation}
  \label{eq:symplectic-form-gr}
  \omega(a \wedge w + v \wedge b, a \wedge w' + v' \wedge b)
    = \langle v, w' \rangle - \langle w, v' \rangle.
\end{equation}

\begin{prop}
  \label{prop:gauss-lagrangian}
  The generalized Gauss map
  \begin{align*}
    G : \S(N\Sigma) & \longrightarrow \Gr_2^+(\R^{n+1}) \\
      (x, \nu) & \longmapsto \phi(x) \wedge \nu
  \end{align*}
  of an immersion $\phi : \Sigma \to \S^n$ is Lagrangian with respect to the symplectic form \eqref{eq:symplectic-form-gr}.
  \begin{proof}
    Fix some point $(x, \nu) \in \S(N\Sigma)$ and let $V, V' \in T_x \Sigma$ and $\xi, \xi' \in T_\nu(\S(N_x\Sigma))$.
    We have, from the definition of the second fundamental form $A^\nu = \nu \cdot A(\cdot, \cdot)$,
    \[
      \Dd_{(V, \xi)} G = \Dd_V \phi \wedge \nu + \phi \wedge (\xi - A^\nu(\Dd_V \phi)),
    \]
    with $\Dd_V \phi$ and $A^\nu(\Dd_V \phi)$ tangent to $\phi(\Sigma)$ and $\xi$ orthogonal to both $\nu$ and $\phi(x)$, since $\xi \in N_x \Sigma \subset T_{\phi(x)}\S^n = \phi(x)^\perp$. With a similar expression for $(V', \xi')$, we have:
    \begin{align*}
      \pushQED{\qed}
      G^*\omega((V, \xi), (V', \xi'))
      &= \langle \Dd_V \phi, \xi' - A^\nu(\Dd_{V'} \phi)\rangle - \langle \Dd_{V'} \phi, \xi  - A^\nu(\Dd_{V} \phi) \rangle \\
      &= A^\nu(\Dd_{V} \phi, \Dd_{V'} \phi) - A^\nu(\Dd_{V'} \phi, \Dd_V \phi)
      = 0.
      \qedhere
    \end{align*}
  \end{proof}
\end{prop}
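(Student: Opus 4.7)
The plan is to verify directly that $G^{\ast}\omega$ vanishes pointwise. Since $\dim \S(N\Sigma) = 2 + (n-3) = n-1$ equals half of $\dim \Gr_2^+(\R^{n+1}) = 2(n-1)$, isotropy is exactly what is needed for the Lagrangian condition. I would start by fixing a point $(x, \nu) \in \S(N\Sigma)$ and decomposing an arbitrary tangent vector there as a pair $(V, \xi)$, where $V \in T_x \Sigma$ is the horizontal component governing the motion of the basepoint and $\xi \in N_x \Sigma \cap \nu^{\perp}$ is the vertical component, which lies in the tangent space to the fiber $\S(N_x\Sigma)$.

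The main computation is $\Dd_{(V, \xi)} G$. Taking a curve $(x(t), \nu(t))$ realizing the tangent vector and applying the Leibniz rule to $G = \phi \wedge \nu$, one is reduced to identifying $\dot\nu$. Since $\nu$ stays a unit vector orthogonal to $\phi$, we have $\dot\nu \in T_{\phi(x)}\S^n$; differentiating the relation $\nu \cdot \Dd\phi(W) = 0$ for a tangential section $W$ then shows that the component of $\dot \nu$ tangent to $\phi(\Sigma)$ is exactly $-A^{\nu}(\Dd_V\phi)$, where $A^{\nu} = \nu \cdot A(\cdot, \cdot)$ is the scalar second fundamental form. The remaining component of $\dot\nu$ in $N_x\Sigma \cap \nu^\perp$ is precisely $\xi$. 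This yields
\[
  \Dd_{(V, \xi)} G = \Dd_V \phi \wedge \nu + \phi \wedge \bigl(\xi - A^{\nu}(\Dd_V \phi)\bigr),
\]
already in the form $v \wedge b + a \wedge w$ described in \cref{prop:tangent-grass}, with $a = \phi(x)$ and $b = \nu$, and with $v = \Dd_V \phi$ and $w = \xi - A^\nu(\Dd_V\phi)$ both orthogonal to $\Span\{\phi, \nu\}$ as required.

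Finally, I would substitute into \eqref{eq:symplectic-form-gr} with a second tangent vector $(V', \xi')$. The cross pairings $\langle \Dd_V\phi, \xi' \rangle$ and $\langle \xi, \Dd_{V'}\phi \rangle$ vanish since $\xi, \xi'$ are normal to $\Sigma$ while $\Dd_V \phi, \Dd_{V'}\phi$ are tangential, so what remains is
\[
  -\langle \Dd_V \phi, A^{\nu}(\Dd_{V'} \phi) \rangle + \langle A^{\nu}(\Dd_V \phi), \Dd_{V'}\phi \rangle = -A^{\nu}(\Dd_V\phi, \Dd_{V'}\phi) + A^{\nu}(\Dd_{V'}\phi, \Dd_V\phi),
\]
which is zero by the symmetry of the second fundamental form. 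The only nontrivial input is identifying the tangential part of $\dot\nu$ with $-A^{\nu}(\Dd_V\phi)$; once that is in place, the Lagrangian property of $G$ is exactly the classical symmetry of $A^{\nu}$, which is the essential geometric content of the statement.
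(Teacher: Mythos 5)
Your proof is correct and follows essentially the same route as the paper: compute $\Dd_{(V,\xi)}G = \Dd_V\phi \wedge \nu + \phi \wedge (\xi - A^\nu(\Dd_V\phi))$, substitute into \eqref{eq:symplectic-form-gr}, note the cross terms vanish because $\xi,\xi'$ are normal to $\Sigma$, and conclude by the symmetry of the second fundamental form. The only difference is that you spell out more explicitly why the cross pairings $\langle \Dd_V\phi,\xi'\rangle$ vanish, which the paper leaves implicit in the step where $G^*\omega$ is reduced to $A^\nu(\Dd_V\phi,\Dd_{V'}\phi) - A^\nu(\Dd_{V'}\phi,\Dd_V\phi)$.
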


For the rest of this paper we will restrict to an ambient sphere of dimension $n = 3$ and stick to \cref{def:gauss-map}. In this case we have a more precise characterization of the Grassmannian. Recall that the Hodge star $\star : \extpow^2 \R^4 \to \extpow^2 \R^4$ is an involution, and denote by $\extpow^2_\pm$ its $(\pm 1)$-eigenspaces.

\begin{prop}
  \label{prop:grass-spheres}
  The map $I : \extpow^2 \R^4 \to \extpow^2_+ \R^4 \times \extpow^2_- \R^4$, 
  \begin{equation}
    \label{eq:isometry-I}
    I(\xi) := \left( \frac{\xi + \star \xi}{2}, \frac{\xi - \star \xi}{2} \right),
  \end{equation}
  is a linear isometry and identifies the Grassmannian $\Grass$ with the product $\S^2_+(1 / \sqrt{2}) \times \S^2_-(1 / \sqrt{2})$ of the spheres of radius $1 / \sqrt{2}$ in $\extpow^2_\pm \R^4$.
  \begin{proof}
    Clearly $I$ is just the decomposition into the orthogonal sum $\extpow^2 \R^4 = \extpow^2_+ \R^4 \oplus \extpow^2_- \R^4$, so it is an isometry. Recall that a $2$-vector $\xi$ is simple if and only if $\xi \wedge \xi = 0$. Then we have, writing $\xi = \xi_+ + \xi_-$,
    \[
      |\xi_+|^2
      = \frac{1}{4} \left( |\xi|^2 + |\star \xi|^2 + \langle \xi, \star \xi \rangle + \langle \star \xi, \xi \rangle\right)
      = \frac{1}{2} \left( |\xi|^2  + \star (\xi \wedge \xi) \rangle\right)
    \]
    and similarly
    \[
      |\xi_-|^2
      = \frac{1}{4} \left( |\xi|^2 + |\star \xi|^2 - \langle \xi, \star \xi \rangle - \langle \star \xi, \xi \rangle\right)
      = \frac{1}{2} \left( |\xi|^2  - \star (\xi \wedge \xi) \rangle\right).
    \]
    Therefore $\xi$ corresponds to a $2$-plane if and only if $|\xi_+| = |\xi_-|$, in which case both are equal to $1 / \sqrt{2}$.
  \end{proof}
\end{prop}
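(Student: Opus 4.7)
The plan is to prove the two assertions separately: first that $I$ is a linear isometry, and then that it restricts to a bijection of $\Grass$ onto the product $\S^2_+(1/\sqrt{2}) \times \S^2_-(1/\sqrt{2})$.

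For the isometry claim, I would rely on two standard properties of the Hodge star acting on $\extpow^2 \R^4$ equipped with the Euclidean inner product: it is involutive (in dimension four on 2-forms one has $\star^2 = \id$) and self-adjoint. From these two facts the $\pm 1$-eigenspaces $\extpow^2_\pm \R^4$ are mutually orthogonal, and the maps $\xi \mapsto \xi_\pm := (\xi \pm \star\xi)/2$ are precisely the orthogonal projections onto them. Hence $I$ realizes the orthogonal decomposition $\xi = \xi_+ + \xi_-$, so $|I(\xi)|^2 = |\xi_+|^2 + |\xi_-|^2 = |\xi|^2$, proving that $I$ is a linear isometry.

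For the identification with the product of spheres, I would invoke the Plücker criterion: a nonzero element $\xi \in \extpow^2 \R^4$ is simple (i.e.\ of the form $a \wedge b$) if and only if $\xi \wedge \xi = 0$, and it represents a point of $\Grass$ precisely when it is simple with $|\xi| = 1$. Using the defining relation $\xi \wedge \star \eta = \langle \xi, \eta \rangle \, \omega$, where $\omega$ is the standard volume form on $\R^4$, together with $\star \xi = \xi_+ - \xi_-$, I would compute
\[
  \xi \wedge \xi = \langle \xi, \star \xi \rangle \, \omega = \bigl( |\xi_+|^2 - |\xi_-|^2 \bigr)\, \omega.
\]
So $\xi$ is simple if and only if $|\xi_+| = |\xi_-|$, and combined with the unit-length condition $|\xi_+|^2 + |\xi_-|^2 = 1$ this forces $|\xi_\pm| = 1/\sqrt{2}$. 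The reverse inclusion, that every pair $(\xi_+, \xi_-) \in \S^2_+(1/\sqrt{2}) \times \S^2_-(1/\sqrt{2})$ corresponds via $I^{-1}$ to a unit simple 2-vector, follows from the same computation read backwards.

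I do not expect a substantial obstacle: the argument is really a matter of carefully bookkeeping Hodge-star identities and the Plücker relation in the lowest-dimensional case where $\star$ squares to the identity on middle forms. The only point to be pedantic about is the identity $\xi \wedge \xi = \langle \xi, \star\xi \rangle \,\omega$, which is just the defining property of $\star$ applied with $\eta = \xi$.
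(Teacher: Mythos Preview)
Your proposal is correct and follows essentially the same approach as the paper: both use that $I$ realizes the orthogonal eigenspace decomposition for $\star$ (hence is an isometry) and then combine the Pl\"ucker criterion $\xi\wedge\xi=0$ with the identity $\langle\xi,\star\xi\rangle = |\xi_+|^2-|\xi_-|^2$ to characterize unit simple $2$-vectors as those with $|\xi_\pm|=1/\sqrt{2}$. The only cosmetic difference is that the paper computes $|\xi_\pm|^2$ separately while you compute $\xi\wedge\xi$ directly, and you make the reverse inclusion explicit.
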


\subsection{Computations using quaternions}
It turns out that by identifying $\R^4$ with the space of quaternions $\Quat$, the isomorphism $I$ and many of the computations associated with it can be carried out more directly. Recall that the real basis $\{ \bm{1}, \bm{i}, \bm{j}, \bm{k} \}$ satisfies
\begin{align*}
  \star (\bm{1} \wedge \bm{i}) = \bm{j} \wedge \bm{k}, \qquad
  \star (\bm{1} \wedge \bm{j}) = \bm{k} \wedge \bm{i}, \qquad
  \star (\bm{1} \wedge \bm{k}) = \bm{i} \wedge \bm{j},
\end{align*}
in such a way that
\[
  e_{\bm{i}}^\pm := \frac{\bm{1} \wedge \bm{i} \pm \bm{j} \wedge \bm{k}}{\sqrt{2}}, \qquad
  e_{\bm{j}}^\pm := \frac{\bm{1} \wedge \bm{j} \pm \bm{k} \wedge \bm{i}}{\sqrt{2}}, \qquad
  e_{\bm{k}}^\pm := \frac{\bm{1} \wedge \bm{k} \pm \bm{i} \wedge \bm{j}}{\sqrt{2}}
\]
constitute orthonormal bases of $\bigwedge^2_\pm \R^4$; we will fix the orientation of $\extpow^2_\pm \R^4$ so that this basis is positive. Now let $\bm{a} = a_0 + a_1 \bm{i} + a_2 \bm{j} + a_3 \bm{k}$ and $\bm{b} = b_0 + b_1 \bm{i} + b_2 \bm{j} + b_3 \bm{k}$ be two unit quaternions orthogonal to each other. Then
\begin{align*}
  \bm{b} \overline{\bm{a}}
  & = (b_0 \bm{1} + b_1 \bm{i} + b_2 \bm{j} + b_3 \bm{k}) (a_0 \bm{1} - a_1 \bm{i} - a_2 \bm{j} - a_3 \bm{k})  \\
  &= (b_1 a_0 - b_0 a_1 + b_3 a_2 - b_2 a_3) \bm{i} \\
  & \quad + (b_2 a_0 - b_0 a_2 + b_1 a_3 - b_3 a_1) \bm{j} \\
  & \quad + (b_3 a_0 - b_0 a_3 + b_2 a_1 - b_1 a_2) \bm{k}
\end{align*}
and
\begin{align*}
  \overline{\bm{a}} \bm{b}
  & = (a_0 \bm{1} - a_1 \bm{i} - a_2 \bm{j} - a_3 \bm{k}) (b_0 \bm{1} + b_1 \bm{i} + b_2 \bm{j} + b_3 \bm{k}) \\
  &= (a_0 b_1 - a_1 b_0 + a_3 b_2 - a_2 b_3) \bm{i} \\
  & \quad + (a_0 b_2 - a_2 b_0 + a_1 b_3 - a_3 b_1) \bm{j} \\
  & \quad + (a_0 b_3 - a_3 b_0 + a_2 b_1 - a_1 b_2) \bm{k}.
\end{align*}
Notice that the real parts vanish by orthogonality. On the other hand we have
\begin{align*}
  \bm{a} \wedge \bm{b}
  & = (a_0 \bm{1} + a_1 \bm{i} + a_2 \bm{j} + a_3 \bm{k}) \wedge (b_0 \bm{1} + b_1 \bm{i} + b_2 \bm{j} + b_3 \bm{k}) \\
  & =
  (b_1 a_0 - b_0 a_1) \bm{1} \wedge \bm{i} 
  + (b_2 a_0 - b_0 a_2) \bm{1} \wedge \bm{j} 
  + (b_3 a_0 - b_0 a_3) \bm{1} \wedge \bm{k} \\
  &\quad + (b_2 a_1 - b_1 a_2) \bm{i} \wedge \bm{j}
  + (b_3 a_2 - b_2 a_3) \bm{j} \wedge \bm{k}
  + (b_1 a_3 - b_3 a_1) \bm{k} \wedge \bm{i} \\
  & =
    \frac{1}{\sqrt{2}} ((b_1 a_0 - b_0 a_1) + (b_3 a_2 - b_2 a_3)) e_{\bm{i}}^+
  + \frac{1}{\sqrt{2}} ((b_1 a_0 - b_0 a_1) - (b_3 a_2 - b_2 a_3)) e_{\bm{i}}^- \\
  &\quad
  + \frac{1}{\sqrt{2}} ((b_2 a_0 - b_0 a_2) + (b_1 a_3 - b_3 a_1)) e_{\bm{j}}^+
  + \frac{1}{\sqrt{2}} ((b_2 a_0 - b_0 a_2) - (b_1 a_3 - b_3 a_1)) e_{\bm{j}}^- \\
  &\quad
  + \frac{1}{\sqrt{2}} ((b_3 a_0 - b_0 a_3) + (b_2 a_1 - b_1 a_2)) e_{\bm{k}}^+
  + \frac{1}{\sqrt{2}} ((b_3 a_0 - b_0 a_3) - (b_2 a_1 - b_1 a_2)) e_{\bm{k}}^-.
\end{align*}
Comparing the expressions for $\bm{b} \overline{\bm{a}}$ and $\overline{\bm{a}} \bm{b}$ above with this, we see that under the identifications $\bm{i} \leftrightarrow e_{\bm{i}}^\pm$, $\bm{j} \leftrightarrow e_{\bm{j}}^\pm$, $\bm{k} \leftrightarrow e_{\bm{k}}^\pm$ which yield isometries between the space of purely imaginary quaternions $\R^3 \subset \Quat$ and the spaces $\extpow^2_\pm \R^4$, the isomorphism $I$ can be described as
\begin{equation}
  I(\bm{a} \wedge \bm{b}) = \frac{1}{\sqrt{2}} (\bm{b} \overline{\bm{a}}, \overline{\bm{a}} \bm{b}).
\end{equation}
In coordinates, we have
\[
  \bm{b} \overline{\bm{a}}
  = \langle \bm{i}, \bm{b} \overline{\bm{a}} \rangle \bm{i}
   + \langle \bm{j}, \bm{b} \overline{\bm{a}} \rangle \bm{j}
   + \langle \bm{k}, \bm{b} \overline{\bm{a}} \rangle \bm{k}
  = \langle \bm{i} \bm{a}, \bm{b} \rangle \bm{i}
   + \langle \bm{j} \bm{a}, \bm{b} \rangle \bm{j}
   + \langle \bm{k} \bm{a}, \bm{b} \rangle \bm{k}
\]
and
\[
  \overline{\bm{a}} \bm{b}
  = \langle \bm{i}, \overline{\bm{a}} \bm{b} \rangle \bm{i}
   + \langle \bm{j}, \overline{\bm{a}} \bm{b} \rangle \bm{j}
   + \langle \bm{k}, \overline{\bm{a}} \bm{b} \rangle \bm{k}
   = \langle \bm{a} \bm{i}, \bm{b} \rangle \bm{i}
   + \langle \bm{a} \bm{j}, \bm{b} \rangle \bm{j}
   + \langle \bm{a} \bm{k}, \bm{b} \rangle \bm{k},
\]
which corresponds to \eqref{eq:quaternion-coordinates}.

It will be useful later to characterize the sets of planes that contain a given vector $\bm{a} \in \Quat$.  For that, recall that every unit quaternion $\bm{a}$ induces a rotation $R_{\bm{a}}$ that acts on the space $\R^3 \subset \Quat$ of purely imaginary quaternions as
\begin{equation}
  R_{\bm{a}} (\bm{b}) = \bm{a}^{-1} \bm{ba} = \overline{\bm{a}} \bm{ba}.
  \label{eq:rotation-quaternions}
\end{equation}

\begin{prop}
  \label{prop:graph-rotation-plane}
  Let $\bm{a}$ be a unit quaternion. Then the image by $I$ of the set of $2$-planes in $\Grass$ containing $\bm{a}$ is the graph of the rotation $R_{\bm{a}}$ acting on $\S^2\left(1 / \sqrt{2}\right)$.
  \begin{proof}
    This is just a computation:
    \begin{align*}
      I(\{ \bm{a} \wedge \bm{b} \,:\, |\bm{b}| = 1, \bm{b} \perp \bm{a} \})
      &= \left\{ \frac{1}{\sqrt{2}} (\bm{b} \overline{\bm{a}}, \overline{\bm{a}} \bm{b}) \, : \, |\bm{b}| = 1, \bm{b} \perp \bm{a} \right\} \\
      &= \left\{ \frac{1}{\sqrt{2}} (\bm{c}, \overline{\bm{a}} \bm{ca}) \, : \, |\bm{c}| = 1, \Re \bm{c} = 0 \right\} \\
      &= \left\{ \frac{1}{\sqrt{2}} (\bm{c}, R_{\bm{a}}(\bm{c})) \, : \, \bm{c} \in \S^2 \subset \R^3 \subset \Quat \right\}.
    \end{align*}
    Here we have used that $\Re \bm{b} \overline{\bm{a}} = 0$ if and only if $\bm{b}$ is orthogonal to $\bm{a}$.
  \end{proof}
\end{prop}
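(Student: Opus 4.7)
The plan is to apply the formula $I(\bm{a} \wedge \bm{b}) = \tfrac{1}{\sqrt{2}}(\bm{b}\overline{\bm{a}}, \overline{\bm{a}}\bm{b})$ derived in the preceding computation and then reparametrize the planes through $\bm{a}$ by the purely imaginary quaternion $\bm{c} := \bm{b}\overline{\bm{a}}$. The set of oriented $2$-planes containing $\bm{a}$ is parametrized as $\bm{a} \wedge \bm{b}$ where $\bm{b}$ ranges over the unit sphere in $\bm{a}^\perp \subset \Quat$, so it suffices to track how $I$ acts on this one-parameter family of simple $2$-vectors.

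First I would verify that the correspondence $\bm{b} \leftrightarrow \bm{c}$ is a bijection between the unit sphere of $\bm{a}^\perp$ and $\S^2 \subset \R^3 \subset \Quat$. The two relevant facts are that $\Re(\bm{b}\overline{\bm{a}}) = \langle \bm{b}, \bm{a}\rangle$ vanishes precisely when $\bm{b} \perp \bm{a}$, and that quaternion multiplication is an isometry, so $|\bm{c}| = |\bm{b}|\,|\overline{\bm{a}}| = 1$. The inverse map is $\bm{b} = \bm{c}\bm{a}$, and the same two properties are immediate for this expression.

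Then I would substitute $\bm{b} = \bm{c}\bm{a}$ into the second component of $I(\bm{a} \wedge \bm{b})$: using $\overline{\bm{a}}\bm{a} = 1$ one gets $\overline{\bm{a}}\bm{b} = \overline{\bm{a}}\bm{c}\bm{a}$, which by \eqref{eq:rotation-quaternions} is exactly $R_{\bm{a}}(\bm{c})$. Thus $I(\bm{a} \wedge \bm{b}) = \tfrac{1}{\sqrt{2}}(\bm{c}, R_{\bm{a}}(\bm{c}))$, and letting $\bm{c}$ range over $\S^2$ sweeps out precisely the graph of $R_{\bm{a}}$ inside $\S^2(1/\sqrt{2}) \times \S^2(1/\sqrt{2})$. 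There is no substantive obstacle since everything reduces to the identity $\overline{\bm{a}}\bm{a} = 1$; the only small subtlety worth noting is that the identification of $\Im \Quat$ with $\R^3$ underlying the definition of $R_{\bm{a}}$ must match the one identifying $\Im \Quat$ with $\extpow^2_\pm \R^4$ via $\bm{i} \mapsto e_{\bm{i}}^\pm$, etc., but this consistency is already built into the derivation of the formula for $I$ carried out above.
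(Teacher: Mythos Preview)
Your proposal is correct and follows essentially the same route as the paper: apply the formula $I(\bm{a}\wedge\bm{b})=\tfrac{1}{\sqrt{2}}(\bm{b}\overline{\bm{a}},\overline{\bm{a}}\bm{b})$, substitute $\bm{c}=\bm{b}\overline{\bm{a}}$, and recognize $\overline{\bm{a}}\bm{c}\bm{a}=R_{\bm{a}}(\bm{c})$. You spell out the bijection $\bm{b}\leftrightarrow\bm{c}$ and the identification of $\Im\Quat$ with $\extpow^2_\pm\R^4$ a bit more explicitly than the paper does, but the argument is the same.
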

Next we study the symplectic structure in this representation. Define the following symplectic forms on $\S^2_\pm(1/\sqrt{2})$: for a point $\bm{p} \in \S^2_\pm$ we set
\[
  \omega_\pm|_{\bm{p} / \sqrt{2}}(\bm{x}, \bm{y}) := \langle \bm{x}, \bm{y} \bm{p} \rangle.
\]
Clearly $\left( \S^2_\pm(1/\sqrt{2}), \omega_\pm \right)$ are both symplectomorphic to a standard $\S^2$ of total symplectic area $4 \pi (1 / \sqrt{2})^2 = 2\pi$ (with the orientation induced by $\omega_\pm$).

\begin{prop}
  The map $I$ is a symplectomorphism between $(\Grass, \omega)$ and $(\S^2_+, \omega_+) \times (\S^2_-, -\omega_-)$.
  \begin{proof}
    Notice that
    \[
      \Dd I |_{\bm{a} \wedge \bm{b}}(\bm{a} \wedge \bm{w} + \bm{v} \wedge \bm{b})
      = \frac{1}{\sqrt{2}} (\bm{b} \overline{\bm{v}} + \bm{w} \overline{\bm{a}}, \overline{\bm{a}} \bm{w} + \overline{\bm{v}} \bm{b}).
    \]
    Therefore
    \begin{align*}
      I^*(\omega_+ - \omega_-)|_{\bm{a} \wedge \bm{b}}((\bm{v}, \bm{w}), (\bm{v}', \bm{w}'))
      = \frac{1}{2} \bigl( \langle \bm{b} \overline{\bm{v}} &+ \bm{w} \overline{\bm{a}}, (\bm{b} \overline{\bm{v}'} + \bm{w}' \overline{\bm{a}}) \bm{b} \overline{\bm{a}} \rangle \\
      &- \langle \overline{\bm{a}} \bm{w} + \overline{\bm{v}} \bm{b}, (\overline{\bm{a}} \bm{w}' + \overline{\bm{v}'} \bm{b}) \overline{\bm{a}} \bm{b} \rangle \bigr).
    \end{align*}
    We compute term by term. First look at the term with $\bm{v}, \bm{v}'$:
    \[
      \langle \bm{b} \overline{\bm{v}} , \bm{b} \overline{\bm{v}'} \bm{b} \overline{\bm{a}} \rangle
      - \langle \overline{\bm{v}} \bm{b}, \overline{\bm{v}'} \bm{b} \overline{\bm{a}} \bm{b} \rangle
      = \langle \overline{\bm{v}}, \overline{\bm{v}'} \bm{b} \overline{\bm{a}} \rangle
      - \langle \overline{\bm{v}}, \overline{\bm{v}'} \bm{b} \overline{\bm{a}} \rangle
      = 0.
    \]
    Similarly the term with $\bm{w}, \bm{w}'$ is
    \[
      \langle  \bm{w} \overline{\bm{a}}, \bm{w}' \overline{\bm{a}} \bm{b} \overline{\bm{a}} \rangle
      - \langle \overline{\bm{a}} \bm{w}, \overline{\bm{a}} \bm{w}' \overline{\bm{a}} \bm{b} \rangle
      = \langle  \bm{w}, \bm{w}' \overline{\bm{a}} \bm{b} \rangle
      - \langle \bm{w}, \bm{w}' \overline{\bm{a}} \bm{b} \rangle
      = 0.
    \]
    The term with $\bm{v}, \bm{w}'$ is
    \begin{align*}
      \langle \bm{b} \overline{\bm{v}} , \bm{w}' \overline{\bm{a}} \bm{b} \overline{\bm{a}} \rangle
      - \langle \overline{\bm{v}} \bm{b}, \overline{\bm{a}} \bm{w}' \overline{\bm{a}} \bm{b} \rangle
      &=\langle \bm{b} \overline{\bm{v}} , \bm{w}' (-\overline{\bm{b}} \bm{a}) \overline{\bm{a}} \rangle
      - \langle \overline{\bm{v}}, \overline{\bm{a}} \bm{w}' \overline{\bm{a}} \rangle
      = -\langle \bm{b} \overline{\bm{v}} , \bm{w}' \overline{\bm{b}} \rangle
      - \langle \bm{v}, \bm{a} \overline{\bm{w}'} \bm{a} \rangle \\
      &= \langle \bm{v} \overline{\bm{b}} , \bm{w}' \overline{\bm{b}} \rangle
      + \langle \bm{v}, \bm{w}' \overline{\bm{a}} \bm{a} \rangle
      = 2 \langle \bm{v} , \bm{w}' \rangle.
    \end{align*}
    using that $\bm{a} \perp \bm{b}$, $\bm{b} \perp \bm{v}$ and $\bm{w}' \perp \bm{a}$. Finally, the last term with $\bm{w}, \bm{v}'$ similarly gives
    \begin{align*}
      \langle \bm{w} \overline{\bm{a}}, \bm{b} \overline{\bm{v}'} \bm{b} \overline{\bm{a}} \rangle
      - \langle \overline{\bm{a}} \bm{w}, \overline{\bm{v}'} \bm{b} \overline{\bm{a}} \bm{b} \rangle
      &= \langle \bm{w}, \bm{b} \overline{\bm{v}'} \bm{b} \rangle
      - \langle \bm{w}, \bm{a} \overline{\bm{v}'} \bm{b} \overline{\bm{a}} \bm{b} \rangle \\
      &= \langle \bm{w}, (-\bm{v}' \overline{\bm{b}}) \bm{b} \rangle
      - \langle \bm{w}, (-\bm{v}' \overline{\bm{a}}) \bm{b} (-\overline{\bm{b}} \bm{a}) \rangle \\
      &= -\langle \bm{w}, \bm{v}' \rangle
      - \langle \bm{w}, \bm{v}' \overline{\bm{a}} \bm{a} \rangle
      = -2\langle \bm{w}, \bm{v}' \rangle.
    \end{align*}
    Putting all terms together gives the equality.
  \end{proof}
\end{prop}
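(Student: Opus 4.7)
The plan is to verify the symplectomorphism condition pointwise: since $I$ is already known to be a diffeomorphism by \cref{prop:grass-spheres}, it suffices to show $I^*(\omega_+ \oplus (-\omega_-)) = \omega$ on tangent vectors. Fix a plane $\bm{a}\wedge\bm{b}$ with $\bm{a},\bm{b}$ orthonormal quaternions and take two tangent vectors $\xi = \bm{a}\wedge\bm{w}+\bm{v}\wedge\bm{b}$ and $\xi' = \bm{a}\wedge\bm{w}'+\bm{v}'\wedge\bm{b}$ with $\bm{v},\bm{v}',\bm{w},\bm{w}' \in \Span\{\bm{a},\bm{b}\}^\perp$, using the parametrization from \cref{prop:tangent-grass}.

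The first step is to compute the differential of $I$ explicitly. Differentiating the curve $(\bm{a}+t\bm{v})\wedge(\bm{b}+t\bm{w})$, which is tangent to $\xi$ (and noting that normalization is second order), together with the Leibniz rule for quaternionic multiplication, yields
\[
  \Dd I|_{\bm{a}\wedge\bm{b}}(\xi)
  = \frac{1}{\sqrt{2}}\bigl(\bm{w}\overline{\bm{a}} + \bm{b}\overline{\bm{v}},\ \overline{\bm{a}}\bm{w} + \overline{\bm{v}}\bm{b}\bigr).
\]
One should quickly check that each component is orthogonal to the base point $\bm{b}\overline{\bm{a}}$, respectively $\overline{\bm{a}}\bm{b}$ (this follows from $\bm{v},\bm{w}\perp\bm{a},\bm{b}$), confirming that the image really is tangent to the product of spheres.

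The second step is to substitute this into $\omega_+ - \omega_-$ evaluated at the base point $I(\bm{a}\wedge\bm{b})$, and to expand. The expression splits into four bilinear terms according to which variables are paired: $(\bm{v},\bm{v}')$, $(\bm{w},\bm{w}')$, $(\bm{v},\bm{w}')$, and $(\bm{w},\bm{v}')$. The systematic tools to reduce them are the isometry identities $\langle \bm{x}\bm{q},\bm{y}\bm{q}\rangle = |\bm{q}|^2\langle\bm{x},\bm{y}\rangle = \langle\bm{q}\bm{x},\bm{q}\bm{y}\rangle$, the fact that $\bm{a}\overline{\bm{b}} = -\bm{b}\overline{\bm{a}}$ (and similarly for $\overline{\bm{a}}\bm{b}$) whenever $\bm{a}\perp\bm{b}$, and the orthogonality relations imposing that various real parts vanish. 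The expected outcome is that the pure $\bm{v}$--$\bm{v}'$ and $\bm{w}$--$\bm{w}'$ contributions cancel between the $+$ and $-$ summands, while the mixed terms yield $+2\langle\bm{v},\bm{w}'\rangle$ and $-2\langle\bm{w},\bm{v}'\rangle$ respectively; after multiplying by the prefactor $\tfrac{1}{2}$ from the two copies of $\tfrac{1}{\sqrt{2}}$, this reproduces exactly $\omega(\xi,\xi') = \langle\bm{v},\bm{w}'\rangle - \langle\bm{w},\bm{v}'\rangle$ from \eqref{eq:symplectic-form-gr}.

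The main obstacle is purely computational: quaternions do not commute, so one must carefully track the order of factors and repeatedly invoke the orthogonality $\bm{a}\perp\bm{b}$, $\bm{v},\bm{w}\perp\bm{a},\bm{b}$ to identify products that are purely imaginary (and hence subject to anticommutation with their conjugates). A cleaner but less self-contained alternative would be to quote the fact that the standard Kähler form on $\Grass\subset\extpow^2\R^4$ decomposes under $\extpow^2\R^4 = \extpow^2_+\oplus\extpow^2_-$ as a difference of the area forms of the two factor spheres; however, the direct quaternionic computation outlined above is more elementary and keeps the paper self-contained.
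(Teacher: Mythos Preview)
Your proposal is correct and follows essentially the same approach as the paper: compute $\Dd I$ in quaternionic coordinates, expand $I^*(\omega_+-\omega_-)$ into the four bilinear terms $(\bm{v},\bm{v}')$, $(\bm{w},\bm{w}')$, $(\bm{v},\bm{w}')$, $(\bm{w},\bm{v}')$, and reduce each using the isometry of quaternion multiplication together with the anticommutation $\bm{a}\overline{\bm{b}}=-\bm{b}\overline{\bm{a}}$ for orthogonal quaternions. The paper carries out exactly these four reductions explicitly, arriving at the same $2\langle\bm{v},\bm{w}'\rangle-2\langle\bm{w},\bm{v}'\rangle$ before the $\tfrac{1}{2}$ prefactor.
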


Finally we compute the degree of the map $G : \Sigma \to \Grass$.
\begin{prop}
  \label{prop:degree-gauss-map}
  If $\Sigma$ has genus $g$, then for any immersion,
  \begin{equation}
    \label{eq:degree-gauss-map}
    (I \circ G)_\# [\Sigma] = (1 - g, 1 - g) \in \Z^2 \cong H_2(\S^2_+ \times \S^2_-) \cong H_2(\Grass),
  \end{equation}
  where we choose generators of $\S^2_\pm$ so that $\omega_\pm$ pairs with them positively.
  \begin{proof}
    Clearly $G_\# [\Sigma] = (k, k)$ for some integer $k$, since $G$ is Lagrangian:
    \[
      (I \circ G)^*(\omega_+ - \omega_-)
      = G^*\omega = 0.
    \]
    We compute first $I^*\omega_+$ on two tangent vectors $(v, w), (v', w') \in T_{a \wedge b}\Grass$:
    \begin{align*}
      I^*\omega_+ |_{\bm{a} \wedge \bm{b}} ((\bm{v}, \bm{w}), (\bm{v}', \bm{w}'))
      &= \frac{1}{2} \langle \bm{b} \overline{\bm{v}} + \bm{w} \overline{\bm{a}},
      \bm{b} \overline{\bm{v}'} \bm{b} \overline{\bm{a}} + \bm{w}' \overline{\bm{a}} \bm{b} \overline{\bm{a}} \rangle \\
      &= \frac{1}{2} \langle -\bm{v} \overline{\bm{b}} + \bm{w} \overline{\bm{a}},
      -\bm{v}' \overline{\bm{a}} - \bm{w}' \overline{\bm{b}} \rangle \\
      &= \frac{1}{2} \left(
        \langle \bm{v} \overline{\bm{b}},
        \bm{v}' \overline{\bm{a}} \rangle
        + \langle \bm{v} \overline{\bm{b}},
         \bm{w}' \overline{\bm{b}} \rangle
        - \langle \bm{w} \overline{\bm{a}},
        \bm{v}' \overline{\bm{a}} \rangle
        - \langle \bm{w} \overline{\bm{a}},
        \bm{w}' \overline{\bm{b}} \rangle
  \right) \\
      &= \frac{1}{2} \left(
        \langle \bm{v} \overline{\bm{b}}, \bm{v}' \overline{\bm{a}} \rangle
        + \langle \bm{v}, \bm{w}' \rangle
        - \langle \bm{w}, \bm{v}' \rangle
        - \langle \bm{w} \overline{\bm{a}}, \bm{w}' \overline{\bm{b}} \rangle \right).
    \end{align*}
    Recall that $\partial_{x_i} G = (\partial_{x_i} \phi, \partial_{x_i} \nu) = (\partial_{x_i} \phi, -\kappa_i \partial_{x_i} \phi)$ in coordinates adapted to the principal directions at a point. In particular, choosing $a = \phi, b = \nu, v = \partial_{x_1} \phi, v' = \partial_{x_2} \phi, w = -\kappa_1 v$ and $w' = -\kappa_2 v'$, we have that
    Thus we get
    \begin{align*}
      (I \circ G)^*\omega_+ |_{\bm{\phi} \wedge \bm{\nu}} (\partial_{x_1}, \partial_{x_2})
      &= \frac{1}{2} \left(
        \langle \bm{v} \overline{\bm{b}}, \bm{v}' \overline{\bm{a}} \rangle
        - \kappa_1 \kappa_2 \langle \bm{v} \overline{\bm{a}}, \bm{v}' \overline{\bm{b}} \rangle \right) \\
      &= \frac{1}{2} \left(
        \langle \bm{v}, \bm{v}' \overline{\bm{a}} \bm{b} \rangle
      - \kappa_1 \kappa_2 \langle \bm{v}, \bm{v}' \overline{\bm{b}} \bm{a} \rangle \right) \\
      &= \frac{1}{2} (1 + \kappa_1 \kappa_2) \langle \bm{v}, \bm{v}' \overline{\bm{a}} \bm{b} \rangle
      = \frac{1}{2} (1 + \kappa_1 \kappa_2) \Re (\overline{\bm{v}} \bm{v}' \overline{\bm{a}} \bm{b}).
    \end{align*}
    Now observe that the expression $\Re (\overline{\bm{z_1}} \bm{z_2} \overline{\bm{z_3}} \bm{z_4})$ always gives the same value whenever $z_1, z_2, z_3, z_4$ are an oriented orthonormal basis of $\R^4$ (one can check this by observing that it remains unchanged by rotations between two elements of the basis). Since $\{ \partial_{x_1} \phi / |\partial_{x_1} \phi|, \partial_{x_2} \phi / |\partial_{x_2} \phi|, \phi, \nu \}$ is such a basis, and so is $\{\bm{1}, \bm{i}, \bm{j}, \bm{k}\}$, we have that
    \[
      (I \circ G)^*\omega_+ |_{\bm{\phi} \wedge \bm{\nu}}
      = \frac{1}{2} (1 + \kappa_1 \kappa_2) |\partial_{x_1} \phi \wedge \partial_{x_2} \phi| \Re (\overline{\bm{1}} \bm{i} \overline{\bm{j}} \bm{k})  \dd x_1 \wedge \dd x_2
      = \frac{1}{2} K \vol_\phi.
    \]
    Finally, by Gauss--Bonnet,
    \[
      \pushQED{\qed}
      k = \deg (I^+ \circ G)
      = \frac{\int_\Sigma (I \circ G)^*\omega_+}{\int_{\S^2_+} \omega_+}
      = \frac{1}{2} \frac{\int_\Sigma K \vol_\phi}{2\pi}
      = \frac{1}{2} \frac{2\pi(2 - 2g)}{2\pi}
      = 1 - g.
      \qedhere
    \]
  \end{proof}
\end{prop}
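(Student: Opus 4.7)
The plan is to exploit the Lagrangian property of $G$ to reduce the $\mathbb{Z}^2$-valued degree to a single integer, and then compute that integer via a Gauss--Bonnet calculation against the symplectic form $\omega_+$.

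First, write $(I\circ G)_\#[\Sigma]=(k_+,k_-)$ in $H_2(\S^2_+\times\S^2_-)\cong\Z^2$. Since $G$ is Lagrangian by \cref{prop:gauss-lagrangian}, we have $(I\circ G)^*(\omega_+-\omega_-)=G^*\omega=0$, so $k_+\langle[\S^2_+],\omega_+\rangle = k_-\langle[\S^2_-],\omega_-\rangle$; with the chosen orientations both sides of this equality equal $2\pi$, giving $k_+=k_-=:k$. Hence it suffices to identify $k$ by computing
\[
  k = \frac{\int_{\Sigma} (I\circ G)^*\omega_+}{\int_{\S^2_+}\omega_+}, \qquad \int_{\S^2_+}\omega_+ = 2\pi.
\]

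For the numerator, I would work pointwise in an oriented orthonormal frame $\{e_1,e_2\}$ of principal directions at a point $x\in\Sigma$, with principal curvatures $\kappa_1,\kappa_2$. Since $\nabla_{e_i}\nu=-\kappa_i e_i$, the differential of $G=\phi\wedge\nu$ gives $\Dd G(e_i) = e_i\wedge\nu - \kappa_i \phi\wedge e_i$, which in the parametrization of $T_{\phi\wedge\nu}\Grass$ from \cref{prop:tangent-grass} corresponds to the pair $(e_i,-\kappa_i e_i)$. Substitute $\bm{a}=\phi,\bm{b}=\nu,\bm{v}=e_1,\bm{v}'=e_2,\bm{w}=-\kappa_1 e_1,\bm{w}'=-\kappa_2 e_2$ into the expression for $I^*\omega_+$ that falls out of the previous symplectomorphism calculation. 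The cross terms $\langle\bm{v},\bm{w}'\rangle$ and $\langle\bm{w},\bm{v}'\rangle$ vanish by orthogonality of the principal directions, and the two surviving terms combine to give
\[
  (I\circ G)^*\omega_+(e_1,e_2) \;=\; \tfrac{1}{2}(1+\kappa_1\kappa_2)\,\Re(\overline{e_1}\,e_2\,\overline{\phi}\,\nu).
\]

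The key simplification is that the real-valued expression $\Re(\overline{\bm{z_1}}\bm{z_2}\overline{\bm{z_3}}\bm{z_4})$ is invariant under oriented orthonormal changes of basis of $\R^4\cong\Quat$ (easily checked by verifying invariance under rotations in any coordinate plane), and equals $1$ on the standard basis $\{\bm{1},\bm{i},\bm{j},\bm{k}\}$. Since $\{e_1,e_2,\phi,\nu\}$ is a positive orthonormal basis of $\R^4$ by our orientation conventions, the expression equals $1$. Invoking the Gauss equation for surfaces in $\S^3$, namely $K_\phi = 1+\kappa_1\kappa_2$, we conclude that $(I\circ G)^*\omega_+ = \tfrac{1}{2}K_\phi\,\mathrm{vol}_\phi$ globally on $\Sigma$. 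Gauss--Bonnet then yields $\int_\Sigma (I\circ G)^*\omega_+ = \tfrac{1}{2}\cdot 2\pi(2-2g) = 2\pi(1-g)$, so $k=1-g$, as required.

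The main technical obstacle is making sure the quaternionic signs and orientations line up: both the sign of $\omega_+$ relative to $(I\circ G)^*$, the orientation of the principal frame chosen to realize $\mathrm{vol}_\phi$, and the agreement of the Hodge and quaternionic conventions. Once these are fixed, the argument proceeds as outlined, and the rotation-invariance of $\Re(\overline{\bm{z_1}}\bm{z_2}\overline{\bm{z_3}}\bm{z_4})$ is what makes the computation avoid any dependence on the particular frame.
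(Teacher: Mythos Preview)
Your proposal is correct and follows essentially the same approach as the paper: use the Lagrangian property to reduce to a single degree $k$, then compute $(I\circ G)^*\omega_+$ in a principal frame via the quaternionic formula for $I^*\omega_+$, invoke the $\SO(4)$-invariance of $\Re(\overline{\bm{z_1}}\bm{z_2}\overline{\bm{z_3}}\bm{z_4})$ to evaluate it as $1$, and finish with Gauss--Bonnet. The only cosmetic difference is that you work directly in an orthonormal principal frame $\{e_1,e_2\}$ whereas the paper uses coordinate vectors $\partial_{x_i}\phi$ and carries the Jacobian factor explicitly.
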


\section{Area of Gauss maps and total absolute curvature}
\label{sec:area-gauss-tac}

In this section we study the area of the Gauss map of an immersion and prove \cref{thm:area-gauss-bound}. We start with a computation:

\begin{lemma}
  Let $\phi : \Sigma \to \S^3$ be an immersion and $G$ its Gauss map. Then its (parametric) area is given by
  \begin{equation}
    \AG(\phi)
    := \Area(G)
    = \int_\Sigma \sqrt{4 H^2 + (K - 2)^2} \dvol_\Sigma
    = \int_\Sigma \sqrt{1+\kappa_1^2} \sqrt{1+\kappa_2^2} \dvol_\Sigma.
    \label{area-gauss}
  \end{equation}
  Here $\kappa_1$ and $\kappa_2$ denote the principal curvatures of $\Sigma$, $H = \tfrac{\kappa_1 + \kappa_2}{2}$ is the mean curvature, and $K = 1 + \kappa_1 \kappa_2$ is the Gauss curvature.
  \begin{proof}
    It is enough to compute the Jacobian of $G$ at a point choosing coordinates $x_1, x_2$ that diagonalize the second fundamental form at that point. Using the description of $T_{\phi \wedge \nu}\Grass$ given by \cref{prop:tangent-grass},
    \[
      \partial_{x_i} G
      = \partial_{x_i} \phi \wedge \nu + \phi \wedge \partial_{x_i} \nu
      = \partial_{x_i} \phi \wedge \nu - \kappa_i \phi \wedge \partial_{x_i} \phi,
    \]
    we see that $\partial_{x_1} G$ is orthogonal to $\partial_{x_2} G$, so
    \[
      \Jac(G)
      = |\partial_{x_1} G| \cdot |\partial_{x_2} G|
      = \sqrt{1 + \kappa_1^2} \sqrt{1 + \kappa_2^2} \, |\partial_{x_1} \phi| \cdot |\partial_{x_2} \phi|
      = \sqrt{1 + \kappa_1^2} \sqrt{1 + \kappa_2^2} \, \Jac(\phi).
    \]
    The alternative expression follows from a computation:
    \begin{align*}
      \pushQED{\qed}
      (1 + \kappa_1^2) (1 + \kappa_2^2)
      &= \kappa_1^2 + \kappa_2^2 + 2 \kappa_1 \kappa_2 + \kappa_1^2 \kappa_2^2 - 2 \kappa_1 \kappa_2 + 1
      = (\kappa_1 + \kappa_2)^2 + (\kappa_1 \kappa_2 - 1)^2 \\
      &= 4 H^2 + (K - 2)^2.
      \qedhere
    \end{align*}
  \end{proof}
\end{lemma}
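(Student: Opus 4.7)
The plan is to compute $\Jac(G)$ pointwise and integrate. At any $p \in \Sigma$, pick orthogonal local coordinates $(x_1, x_2)$ that diagonalize the second fundamental form at $p$, so that by the Weingarten relation $\partial_{x_i}\nu(p) = -\kappa_i\,\partial_{x_i}\phi(p)$. Differentiating $G = \phi \wedge \nu$ yields
\[
  \partial_{x_i} G \;=\; \partial_{x_i}\phi \wedge \nu \;-\; \kappa_i\,\phi \wedge \partial_{x_i}\phi,
\]
which is already in the normal form $\phi \wedge w + v \wedge \nu$ of \cref{prop:tangent-grass}, with $v = \partial_{x_i}\phi$ and $w = -\kappa_i \partial_{x_i}\phi$ both tangent to $\phi(\Sigma)$ and hence orthogonal to the plane $\pi = \phi \wedge \nu$.

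Under the Euclidean inner product on $\extpow^2 \R^4$, the pairing of two such tangent vectors $\phi \wedge w + v \wedge \nu$ and $\phi \wedge w' + v' \wedge \nu$ equals $\langle v, v'\rangle + \langle w, w'\rangle$, because decomposable $2$-vectors of the form $\phi \wedge (\cdot)$ and $(\cdot) \wedge \nu$ with arguments in $\pi^\perp$ live in orthogonal subspaces of $\extpow^2 \R^4$. Since $\partial_{x_1}\phi \perp \partial_{x_2}\phi$ at $p$, this gives $\partial_{x_1}G \perp \partial_{x_2}G$ and $|\partial_{x_i}G| = \sqrt{1+\kappa_i^2}\,|\partial_{x_i}\phi|$. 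Consequently $\Jac(G) = \sqrt{(1+\kappa_1^2)(1+\kappa_2^2)}\,\Jac(\phi)$, and integrating produces the second expression claimed in the lemma.

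The equality with $\int_\Sigma \sqrt{4H^2 + (K-2)^2}\,\dvol_\Sigma$ is purely algebraic: expand $(1+\kappa_1^2)(1+\kappa_2^2) = 1 + \kappa_1^2 + \kappa_2^2 + \kappa_1^2\kappa_2^2$ and match it against $(\kappa_1+\kappa_2)^2 + (\kappa_1\kappa_2 - 1)^2 = 4H^2 + (K-2)^2$.

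There is no real obstacle---the lemma is a routine pointwise calculation once \cref{prop:tangent-grass} and the Weingarten relation are in hand. The only minor subtlety is the use of principal coordinates at umbilic points: at such a point $\kappa_1 = \kappa_2$ and the identity holds in any orthogonal frame, so the formula is valid on the open dense set of non-umbilic points and extends by continuity.
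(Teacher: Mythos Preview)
Your proof is correct and follows essentially the same route as the paper: compute $\partial_{x_i}G$ in principal coordinates, use the tangent-space description from \cref{prop:tangent-grass} to read off orthogonality and lengths, and finish with the same algebraic identity. Your added remark about umbilic points is harmless but unnecessary, since the computation is pointwise and at an umbilic point any orthonormal frame diagonalizes the second fundamental form.
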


We will relate this quantity with another geometric functional called the total absolute curvature of the immersion. This functional was introduced by Chern and Lashof \cite{chern-lashof1}, where they proved a lower bound in arbitrary dimensions independent of the topology of the submanifold.

In order to define it, consider the map $F : U\Sigma \to \S^3$, where $U\Sigma$ denotes the bundle of unit normal vectors to $\Sigma$ in $\R^4$, which assigns to each pair $(x,q) \in U\Sigma$ the same vector $q$ viewed as a vector at the origin, i.e.~as an element of $\S^3$. Since we are assuming that $\Sigma$ has a global unit normal vector $\nu$, the normal bundle of $\Sigma$ in $\R^4$ will be trivialized by the sections $\phi$ and $\nu$, and therefore from now on we will identify $U\Sigma = \Sigma \times \S^1$ with $(x, \theta) \in \Sigma \times \S^1$ corresponding to $\phi(x) \cos \theta + \nu(x) \sin \theta$.
\begin{defn}
  \label{def:total-absolute-curvature}
  The total volume swept out by the map $F$ is called the \emph{total absolute curvature} of the immersion:
  \begin{equation}
    \label{eq:total-absolute-curvature}
    \TAC(\phi) = \int_{U \Sigma} \Jac(F) \dvol_{U\Sigma} = \int_\Sigma \int_{\S^1} \Jac(F)(x, \theta) \dd \theta \dvol_{\Sigma}(x).
  \end{equation}
\end{defn}

More precisely, the integral $\int_{\S^1} \Jac(F)(x, \theta) \, \dd \theta$ is called the absolute curvature (or Lipschitz--Killing curvature) of $\phi$ at $x$. In a subsequent paper, Chern and Lashof \cite{chern-lashof2} proved the following sharp lower bound for every genus $g \geq 0$:
\begin{thm}[Chern--Lashof]
  \label{thm:chern-lashof}
  Let $\phi : \Sigma \to \S^3$ be a smooth immersion of a closed oriented surface $\Sigma$ of genus $g$. Then $\TAC(\phi) \geq 2\pi^2(2g + 2)$.
\end{thm}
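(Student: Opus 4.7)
The plan is to extend to $\S^3$ the classical height-function argument sketched in the introduction for immersions into $\R^n$. For each $y\in\S^3$ I would introduce the spherical height function
\[
  f_y : x\in\Sigma \longmapsto \phi(x)\cdot y\in\R,
\]
with inner product taken in the ambient $\R^4$, and show that (i)~the critical set of $f_y$ is canonically identified with $F^{-1}(y)\subset U\Sigma = \Sigma\times\S^1$, and (ii)~for almost every $y$, $f_y$ is a Morse function. Morse theory then forces $\#F^{-1}(y)\ge 2+2g$ a.e., and the coarea formula applied to the map $F: U\Sigma\to\S^3$ between closed oriented $3$-manifolds concludes
\[
  \TAC(\phi)=\int_{\S^3} \#F^{-1}(y)\,\dvol_{\S^3}(y)\ge (2+2g)\,\vol(\S^3) = 2\pi^2(2+2g).
\]

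For step (i), note that $\Dd_V f_y = \Dd_V\phi\cdot y$ for $V\in T_x\Sigma$, so $x$ is critical iff $y$ is orthogonal to $T_x\Sigma$ in $\R^4$. Since the normal $2$-plane to $\Sigma$ in $\R^4$ at $\phi(x)$ is $\Span\{\phi(x),\nu(x)\}$, this is equivalent to $y = \cos\theta\,\phi(x)+\sin\theta\,\nu(x)$ for a unique $\theta\in\S^1$, matching each critical point $x$ of $f_y$ with a unique $(x,\theta)\in F^{-1}(y)$. For step (ii), I would verify the pointwise identity $|\det\operatorname{Hess}(f_y)(x)| = \Jac(F)(x,\theta)$ at every preimage, so that Morse nondegeneracy of $f_y$ at $x$ is equivalent to $(x,\theta)$ being a regular point of $F$; Sard's theorem applied to $F$ then guarantees that $f_y$ is Morse for a.e.\ $y\in\S^3$. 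Concretely, the Gauss formula $\del_i\del_j\phi = -g_{ij}\phi+A_{ij}\nu+(\text{tangential})$ yields at the critical point
\[
  \operatorname{Hess}(f_y)_{ij} = -\cos\theta\,g_{ij}+\sin\theta\,A_{ij},
\]
while the Weingarten relation $\del_i\nu = -A_i^{\,k}\del_k\phi$ together with $\del_\theta F = -\sin\theta\,\phi+\cos\theta\,\nu$ (a unit vector orthogonal to all $\del_i F$) give $\Jac(F)(x,\theta) = |\det(\cos\theta\,I-\sin\theta\,A)|$, and the two determinants coincide up to sign.

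With this in hand, the classical Morse inequalities on a closed orientable surface of genus $g$ provide at least $b_0+b_1+b_2 = 2+2g$ critical points for every Morse $f_y$, hence $\#F^{-1}(y)\ge 2+2g$ for almost every $y$, and the coarea identity for smooth maps between equidimensional manifolds delivers the stated bound. The only nontrivial ingredient in the plan is the pointwise identity $\Jac(F) = |\det\operatorname{Hess}(f_y)|$, which makes precise the sense in which the linear height-function argument of Chern--Lashof in $\R^n$ transfers verbatim to $\S^3$ upon replacing $\phi\cdot e$ by its spherical analogue $\phi\cdot y$; once this is established, the Morse-theoretic and coarea steps are entirely routine.
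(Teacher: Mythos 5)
The paper does not prove this theorem; it is cited directly from Chern--Lashof and used as a black box in the proof of \cref{thm:area-gauss-bound} (via the Cauchy--Schwarz estimate $\AG(\phi) \geq \TAC(\phi)/\pi$). So there is no in-paper argument to compare against. Your proposal is the classical Morse-theoretic proof, the $\S^3$ analogue of the height-function argument that the authors sketch for $\R^3$ in the introduction, and it is correct. Step~(i) is right: at $\phi(x)$ the normal $2$-plane to $\Sigma$ in $\R^4$ is $\Span\{\phi(x),\nu(x)\}$, so $x$ is critical for $f_y$ iff $y = \phi(x)\cos\theta + \nu(x)\sin\theta$ for a unique $\theta$, which is precisely $(x,\theta) \in F^{-1}(y)$. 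The Hessian formula $\operatorname{Hess}(f_y)_{ij} = -\cos\theta\, g_{ij} + \sin\theta\, A_{ij}$ is correct, matching the factor $(\cos\theta - \kappa_1\sin\theta)(\cos\theta - \kappa_2\sin\theta)$ that the paper computes for $\Jac(F)$ in the proof of \cref{thm:area-gauss-bound}. One small imprecision worth flagging: your ``$|\det\operatorname{Hess}(f_y)| = \Jac(F)$'' should be stated either in an orthonormal frame or with the Hessian taken as the endomorphism $g^{-1}\operatorname{Hess}$, since the raw coordinate determinant of the bilinear form carries an extra factor of $\det(g_{ij})$ relative to the metric Jacobian of $F$ with respect to $\dvol_{U\Sigma}$. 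This does not affect the argument, because the nondegeneracy criterion (Morse iff $y$ is a regular value of $F$) only requires the two quantities to vanish simultaneously, and the coarea identity should be applied with the metric Jacobian on both sides. With that understood, Sard, the Morse inequalities on a genus-$g$ surface ($b_0 + b_1 + b_2 = 2g+2$), and $\vol(\S^3) = 2\pi^2$ deliver exactly $\TAC(\phi) \geq 2\pi^2(2g+2)$.
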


The constant in \cref{thm:chern-lashof} is optimal for every $g$, but it is only attained for $g = 0$ and $1$. Surfaces for which equality holds are called \emph{taut immersions}; the taut spheres are precisely all round (not necessarily great) spheres in $\S^3$ \cite{chern-lashof1}, whereas for higher genus, Banchoff \cite{banchoff} proved that there are no taut surfaces of genus bigger than one, and that every taut torus is a Dupin cyclide, namely the image by a M\"obius transformation of a product torus $\S^1(\rho) \times \S^1(\sqrt{1-\rho^2}) \subset \S^3$. See the book \cite{cecil-ryan} or the surveys in the collection \cite{tight-taut-msri} for an introduction and a comprehensive list of references on the topic.

We will discuss the optimality of the inequality (even though it is always strict) for genus $g > 1$ in the next section; for now let us show how \cref{thm:area-gauss-bound} follows from it.

\begin{proof}[Proof of \cref{thm:area-gauss-bound}]
  Let us compute the Jacobian $\Jac(F)$ at an arbitrary point $x \in \Sigma$ by choosing local coordinates such that $\del_{x_i} \nu(x) = -\kappa_i(x) \del_{x_i} \phi(x)$. The partial derivatives of $F$ are
  \[
    \partial_{x_i} F(x, \theta) = (\cos \theta - \kappa_i \sin \theta) \partial_{x_i} \phi 
    \qquad \text{and} \qquad
    \partial_{\theta} F(x, \theta) = \phi(x) \sin \theta - \nu(x) \cos \theta,
  \]
  so that
  \[
    \partial_{\theta} F \wedge F
    = (\phi \sin \theta - \nu \cos \theta) \wedge (\phi \cos \theta + \nu \sin \theta)
    = \phi \wedge \nu
  \]
  and thus
  \begin{align*}
    \Jac(F)(x, \theta)
    &= |\partial_{x_1} F \wedge \partial_{x_2} F \wedge \partial_\theta F \wedge F| \\
    &= |\cos \theta - \kappa_1 \sin \theta| |\cos \theta - \kappa_2 \sin \theta| |\partial_{x_1} \phi \wedge \partial_{x_2} \phi \wedge \phi \wedge \nu| \\
    &= |\cos \theta - \kappa_1 \sin \theta| |\cos \theta - \kappa_2 \sin \theta| |\partial_{x_1} \phi \wedge \partial_{x_2}| \\
    &= |\cos \theta - \kappa_1 \sin \theta| |\cos \theta - \kappa_2 \sin \theta| \Jac(\phi).
  \end{align*}
  Let $\tilde \theta_i(x) \in \left( 0, \pi \right)$ be such that $\cot \tilde \theta_i(x) = \kappa_i(x)$ and hence
  \[
    \cos \tilde \theta_i = \frac{\kappa_i}{\sqrt{1 + \kappa_i^2}}
    \qquad \text{and} \qquad
    \sin \tilde \theta_i = \frac{1}{\sqrt{1 + \kappa_i^2}}.
  \]
  Then
  \[
    \cos \theta - \kappa_i \sin \theta
    = \sqrt{1+\kappa_i^2} \left( \sin \tilde \theta_i \cos \theta - \cos \tilde \theta_i \sin \theta \right)
    = \sqrt{1+\kappa_i^2} \sin \left( \tilde \theta_i -  \theta \right)
  \]
  and we can write the total absolute curvature as
  \[
    \TAC(\phi) = \int_\Sigma \sqrt{1 + \kappa_1(x)^2} \sqrt{1 + \kappa_2(x)^2} \int_{\S^1} |\sin(\tilde \theta_1(x) - \theta) \sin(\tilde \theta_2(x) - \theta)| \dd \theta \dvol_{\Sigma}(x).
  \]
  The inner integral depends only on $|\tilde \theta_1(x) - \tilde \theta_2(x)|$ and can be computed explicitly, although Cauchy--Schwarz suffices to obtain an upper bound:
  \[
    \int_{\S^1} |\sin(\tilde \theta_1 - \theta) \sin(\tilde \theta_2 - \theta)| \dd \theta
    \leq \left( \int_{\S^1} \sin(\tilde \theta_1 - \theta)^2 \dd \theta \right)^{1 / 2}
    \left( \int_{\S^1} \sin(\tilde \theta_2 - \theta)^2 \dd \theta \right)^{1 / 2} = \pi.
  \]
  From this the inequality follows. Equality can only hold if $\tilde \theta_1(x) = \tilde \theta_2(x)$ for every $x \in \Sigma$, which implies that $\phi$ is umbilic everywhere and then it is a classical fact that $\phi$ covers a round sphere. Since $\phi$ is an immersion, it follows that $g = 0$, and the last claim in the theorem is a consequence of the description of Gauss maps of round spheres from \eqref{eq:gauss-map-round-sphere}.
\end{proof}

\section{Optimality and existence of almost minimizers}
\label{sec:min-sequences}

We have seen that for an immersion $\phi : \Sigma \to \S^3$ of a surface of genus $g$,
\begin{equation}
  \label{eq:lower-bound-area-gauss}
  \AG(\phi) \geq 4\pi (1 + g)
\end{equation}
and equality can only hold when $g = 0$, in which case $\phi$ is umbilic and hence embeds $\Sigma$ as a round sphere.

Let us first show that this inequality is sharp, that is, that the constant in the right hand side of \eqref{eq:lower-bound-area-gauss} cannot be improved.

\begin{prop}
  \label{prop:concentrating-sequence}
  For any $g \geq 0$ and any $\eps > 0$ there exists an immersion $\phi : \Sigma \to \S^3$ of a surface $\Sigma$ of genus $g$ such that $\AG(\phi) \leq 4\pi (1 + g) + \eps$.
  \begin{proof}
    The result follows from the corresponding result in $\R^3$. Namely, it is well known that for immersions $\phi_0 : \Sigma \to \R^3$ with unit normal $\nu_0$ of surfaces $\Sigma$ of arbitrary genus $g$, the analogous inequality
    \[
      \Area(\nu_0) \geq 4\pi (g + 1)
    \]
    holds and has plenty of minimizers, so-called tight surfaces. For $g = 0$ they agree precisely with the boundaries of convex bodies \cite{chern-lashof1}. For $g > 0$, one can obtain a tight surface of genus $g$ by starting with a smooth convex body containing two parallel planar regions and carving $g$ holes connecting them, each with nonpositive Gauss curvature everywhere (see \cite{cecil-ryan-top-cycles} for a complete characterization).

    Now given such an immersion $\phi_0 : \Sigma \to \R^3$ and $r > 0$, consider the immersion $\phi_r : \Sigma \to \S^3$ given by $\phi_r := \Pi^{-1} \circ \phi_0$, where $\Pi^{-1} : \R^3 \to \S^3$ is the inverse stereographic projection, and its unit normal vector $\nu_r : \Sigma \to \S^3$. It is immediate to check that
    \[
      \phi_r \longrightarrow p_S
      \qquad \text{and} \qquad
      \nu_r \longrightarrow \nu_0
      \qquad \text{as } r \to 0
    \]
    in $C^\infty(\Sigma)$. Therefore $\AG(\phi_r) = \Area(\phi_r \wedge \nu_r) \to \Area(p_S \wedge \nu_0) = \Area(\nu_0)$ and the same lower bound of $4\pi(g+1)$ is attained.
  \end{proof}
\end{prop}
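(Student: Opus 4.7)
The strategy is to transplant tight immersions from $\R^3$ to a small neighborhood of a point in $\S^3$, exploiting the identity $\AG(\phi) = \Area(\phi \wedge \nu)$ viewed as a map into $\Grass \subset \extpow^2 \R^4$. Recall that an immersion $\phi_0 : \Sigma \to \R^3$ is called \emph{tight} when $\Area(\nu_0) = \int_\Sigma |K_0| \, \dvol = 4\pi(g+1)$, i.e.~when it saturates the Chern--Lashof inequality in $\R^3$. Tight immersions of every genus exist by a classical construction, which I would simply quote: for $g=0$ any round sphere works, while for $g \geq 1$ one starts with a smooth convex body containing two parallel planar discs and drills $g$ handles between them, each designed to have nonpositive Gauss curvature so that the unit normal covers $\S^2$ once positively on the convex part and exactly $g$ additional times negatively on the handles.

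Starting from such a $\phi_0 : \Sigma \to \R^3$ with unit normal $\nu_0 : \Sigma \to \S^2$, I would define, for each small $r > 0$, the immersion $\phi_r : \Sigma \to \S^3$ by $\phi_r := \Pi^{-1} \circ (r \, \phi_0)$, where $\Pi : \S^3 \setminus \{p_N\} \to \R^3$ is stereographic projection from a chosen north pole and $p_S := \Pi^{-1}(0)$. Since $\Pi^{-1}$ is conformal and $\Dd \Pi^{-1}|_0$ is a linear isometry onto $T_{p_S}\S^3 \cong p_S^\perp \subset \R^4$, a direct computation shows that, as $r \to 0$, the immersion $\phi_r$ converges to the constant map $p_S$ in $C^\infty(\Sigma)$, while its intrinsically-defined unit normal $\nu_r : \Sigma \to \S^3$ converges to $\nu_0$ (identified through $\Dd \Pi^{-1}|_0$ as a map into $p_S^\perp \cap \S^3$) also in $C^\infty(\Sigma)$.

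By continuity of the wedge product in $C^1$, the Gauss maps $G_{\phi_r} = \phi_r \wedge \nu_r$ then converge in $C^1$ to $p_S \wedge \nu_0$. The limit map takes values in $\Gamma_{p_S} = \{ p_S \wedge v : v \in p_S^\perp \cap \S^3 \} \subset \Grass$, and since the map $v \mapsto p_S \wedge v$ restricts to a linear isometric embedding of $p_S^\perp \cap \S^3 \cong \S^2$ onto $\Gamma_{p_S}$, we obtain $\Area(p_S \wedge \nu_0) = \Area(\nu_0) = 4\pi(g+1)$. Hence $\AG(\phi_r) \to 4\pi(g+1)$ as $r \to 0$, and any sufficiently small $r$ produces the desired immersion. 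The only delicate point is Step~1, namely the existence of tight surfaces of arbitrary genus; I expect this to be the main technical input, but it is a well-documented classical result (cf.~the references cited in this section) that I would invoke rather than reprove.
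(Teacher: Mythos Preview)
Your argument is essentially the same as the paper's: take a tight immersion $\phi_0$ in $\R^3$, scale by $r$ and push through the inverse stereographic projection, then use $\phi_r \to p_S$ and $\nu_r \to \nu_0$ in $C^\infty$ to get $\AG(\phi_r) = \Area(\phi_r \wedge \nu_r) \to \Area(p_S \wedge \nu_0) = \Area(\nu_0) = 4\pi(g+1)$. You even supply two details the paper leaves implicit, namely the scaling factor $r$ in the definition of $\phi_r$ and the fact that $v \mapsto p_S \wedge v$ is an isometric embedding of $\S^2$ into $\Grass$.
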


In light of these examples and of the non-existence of minimizers, one may be tempted to think that the concentrating behavior described in \cref{prop:concentrating-sequence} is the only possible way to approach the infimum for genus $g > 0$. The rest of this section will be dedicated to constructing more elaborate sequences in which there is a macroscopic round $\S^2$ (which we call $S$) which passes to the limit, and concentration happens along $g$ points in $S$. This is the content of \cref{thm:concentration-handles}.

The following family of surfaces (handles) will be the basic block of our construction.

\begin{lemma}
  \label{lem:low-curvature-handle}
There exists a family of smooth embedded surfaces $\{H_h\} \subset \R^3$ with the following properties:
\begin{itemize}
  \item $H_h$ has one boundary component and a neighborhood of the boundary is contained in $\R^2 \subset \R^3$.
  \item $H_h$ has genus $1$ and thus is homeomorphic to a torus with a disk removed.
  \item The total curvature of $H_h$ converges to $4 \pi$ as $h \searrow 0$.
\end{itemize}

\begin{proof}
We start with a smooth decreasing function $f : \R^+ \to \R$ with the following properties:
\begin{itemize}
  \item $f(r) = 1$ for $0 < r < 1$.
  \item $f(r) = \frac{7-2r}{4}$ for $2 < r < 3$.
  \item $f(r) = 0$ for $r > 4$.
  \item $f$ is concave on $(1, 2)$.
  \item $f$ is convex on $(3, 4)$.
\end{itemize}

Then define the function $g_h : \R^2 \to \R$, which is of class $\Cont^{1,1}$ (and can be made $\Cont^\infty$ with a more complicated definition or by smoothing out):
\begin{equation}
  g_h(x, y) :=
  \begin{cases}
    h f(\sqrt{x^2 + (y - 1)^2}), &y \geq 1 \\
    h f(|x|), &-1 \leq y \leq 1 \\
    h f(\sqrt{x^2 + (y + 1)^2}), &y \leq -1 \\
  \end{cases}
\end{equation}
and let $G_h \subset \R^3$ be the graph of $g_h$ over $\R^2$. It is easy to check that the total curvature of $G_h$ tends to $0$ as $h \to 0$ (for example by examining the image of the normal map).

Observe that the graphs of $g_h$ over $(-3,-2)\times(-1,1)$ and over $(2,3)\times(-1,1)$ are specular reflections of each other with respect to the plane $\{ x = 0 \}$. We finish the construction by inserting the \textit{cylinder} $C_h$ provided by \cref{lem:embedded-cylinder} below. This annulus attaches smoothly to these planar regions of $G_h$ and contributes exactly $4 \pi$ to the total absolute curvature, thanks to Gauss--Bonnet:
\[
  \int_{C_h} |K|
  = - \int_{C_h} K
  = - 2 \pi \chi(C_h) + \int_{\Gamma_1} k_g + \int_{\Gamma_2} k_g
  = 2 \pi + 2 \pi,
\]
where $\Gamma_1$ and $\Gamma_2$ denote the boundary components of $C_h$, which are smooth simple closed curves contained in planes.
\end{proof}
\end{lemma}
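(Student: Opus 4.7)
My plan is to build $H_h$ as a smooth embedded surface assembled from two pieces: a nearly planar ``base'' $G_h$ whose total absolute curvature vanishes as $h \searrow 0$, and a fixed-topology ``bridge'' $C_h$, an embedded annulus in $\R^3$, whose total absolute curvature equals exactly $4\pi$ by a Gauss--Bonnet computation. The bridge provides the handle (and thus the genus-one topology), while the base carries the outer planar boundary circle.

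\textbf{The bridge.} I would take $C_h$ to be a smooth embedded annulus in $\R^3$ with these properties: its two boundary components are planar circles, each contained in a plane to which $C_h$ is tangent along that circle, and the Gauss curvature of $C_h$ is nonpositive everywhere. Concretely, one can produce such a $C_h$ as a surface of revolution with a ``pinched'' (hourglass-like) profile: take an arclength-parametrized curve $\gamma = (f, g)$ in $\{f \geq 0\}$ joining the attachment circles, with horizontal tangents at both endpoints and with $f$ convex along the curve, so that the resulting surface of revolution has $K = -f''/f \leq 0$. By Gauss--Bonnet, using $\chi(C_h) = 0$ and the fact that each planar boundary circle of $C_h$, with $C_h$ tangent to its plane, has total geodesic curvature $\pm 2\pi$ in $C_h$,
\[
  \int_{C_h} |K| \, \dd A \;=\; -\int_{C_h} K \, \dd A \;=\; \int_{\partial C_h} k_g \, \dd s \;=\; 4\pi.
\]

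\textbf{The base and the assembly.} Next I would choose a smooth function $g_h : \R^2 \to \R$, supported in a large disk, with $\|g_h\|_{C^2} = O(h)$, designed so that its graph $G_h$ contains two small disjoint disks whose boundary circles (and tangent planes along them) match those of $\partial C_h$. Since $G_h$ is the graph of a function with small $C^2$ norm, its Gauss curvature satisfies $|K| = O(h^2)$ pointwise, and the area is bounded, so $\int_{G_h} |K| \, \dd A = O(h^2) \to 0$. I then remove the two matching disks from $G_h$ and glue in $C_h$, restricting $g_h$ to a bounded domain so that the outer boundary of the resulting surface is a planar circle. Call the result $H_h$. Topologically $H_h$ is a disk with a handle attached, i.e., a torus with one boundary disk removed, and
\[
  \int_{H_h} |K| \, \dd A \;=\; \int_{G_h \setminus (D_1 \cup D_2)} |K| \, \dd A + \int_{C_h} |K| \, \dd A \;=\; O(h^2) + 4\pi \;\longrightarrow\; 4\pi \quad \text{as } h \searrow 0.
\]

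\textbf{Main obstacle.} The main technical care goes into arranging the gluing of $C_h$ to $G_h$ to be $C^\infty$ (not merely $C^1$) while keeping the whole surface embedded. This reduces to matching all derivatives of the two parametrizations along the attachment circles, which can be achieved by choosing $\gamma$ and $g_h$ to have compatible Taylor expansions there; any small local perturbations needed for this smoothing do not affect the Gauss--Bonnet count above. A secondary point is verifying that the convex profile $\gamma$ can be chosen to give a genuinely embedded (non self-intersecting) $C_h$ for all small $h > 0$, which is clear from the surface-of-revolution description.
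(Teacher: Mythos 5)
Your overall plan is the paper's: a nearly flat graphical base $G_h$ with vanishing total curvature, plus a nonpositively curved annular bridge $C_h$ whose total absolute curvature is exactly $4\pi$ by Gauss--Bonnet, glued along planar circles; the Gauss--Bonnet computation you give for $C_h$ is correct. However, there is a genuine gap in constructing $C_h$ as a \emph{pure} surface of revolution. A surface of revolution has its two boundary circles in \emph{parallel} planes perpendicular to the axis, both centered on the axis. To glue this to a graph $z=g_h(x,y)$ with $\|g_h\|_{C^2}=O(h)$ along two disjoint planar disks $D_1,D_2\subset G_h$, those parallel planes must have slope $O(h)$, so the axis is tilted from vertical by an angle $\beta=O(h)$. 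Then the vertical separation of the two circle centers is $\approx L\cos\beta\approx L$, which is bounded by the graph height $O(h)$, while the horizontal separation is $\approx L\sin\beta=O(h^2)$. So the disks are only $O(h^2)$ apart in $\R^2$ yet lie in parallel planes with the same slope and vertical offset $\sim L\sim h$; forcing $g_h$ to interpolate across the gap produces second derivatives of order $h/(h^2)^2=h^{-3}$, contradicting $\|g_h\|_{C^2}=O(h)$. Trying instead to keep the disks macroscopically separated forces the axis towards the horizontal, so the boundary planes become nearly vertical and cannot be tangent to the graph at all. (And removing a single annular region from $G_h$ and gluing a tube between its two \emph{concentric} boundary circles leaves a topological disk, not a punctured torus.) So the gluing you describe cannot produce a genus-one surface while keeping the base $C^2$-small.

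This is exactly why the paper does not take $C_h$ to be a surface of revolution: in \cref{lem:embedded-cylinder}, the auxiliary piece $\tilde C_0$ is a surface of revolution about a horizontal axis, which is then sheared by the affine map $A(x,y,z)=(x+\lambda^{-1}z,y,z)$ and finally doubled by a reflection. The result is an annulus whose two boundary planes are \emph{non-parallel}, tilted symmetrically towards each other, which matches the two slanted planar patches built into the base $g_h$ of the paper (the graphs over $(2,3)\times(-1,1)$ and $(-3,-2)\times(-1,1)$ have opposite slopes $\mp h/2$). Crucially, the affine map preserves the sign of the Gauss curvature, so $K\le 0$ survives the shear; a generic perturbation would not be guaranteed to do so, and since $\int|K|$ is not a topological invariant, your closing remark that small smoothing perturbations ``do not affect the Gauss--Bonnet count'' is not safe unless the sign condition $K\le 0$ on the bridge is preserved exactly. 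To repair your argument you would need to incorporate this affine (or some equivalent) mechanism to break the parallelism of the two boundary planes.
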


\begin{lemma}
  \label{lem:embedded-cylinder}
  Let $B$ be a ball in a plane $\pi \subset \R^3$ and consider the reflection $B'$ of $B$ with respect to a plane $\pi_0$ which is disjoint from $\pi$. Then there exists a smooth embedded surface $C \subset \R^3$ with the following properties:
  \begin{itemize}
    \item $C$ is homeomorphic to an annulus.
    \item The boundary components of $C$ are contained in $B$ and $B'$, respectively, and a neighborhood of each boundary component in $C$ is contained in the respective plane.
    \item $C$ has nonpositive Gauss curvature everywhere.
  \end{itemize}
  \begin{proof}
    We construct first a surface $C_0$ which is homeomorphic to an annulus, is contained in $B$ near one of its boundary components, and coincides with a cylinder meeting $\pi_0$ orthogonally near the other boundary component. Once we have this, $C_0$ and its reflection through $\pi_0$ can be joined smoothly and clearly define a surface $C$ with the desired properties.

    In the case when $\pi$ and $\pi_0$ are parallel, say $\pi_0$ is the plane $\{x=0\}$, a thin enough surface of revolution does the job. More precisely, suppose $\pi = \{x = -a\}$ and $B$ is the ball of radius $r$ in $\pi$ centered at $(-a, 0, 0)$. Choose a convex curve $\gamma = (\gamma_1, \gamma_2) : [-1, 0] \to \R^2_{x,z}$ contained in $\{ z > 0\}$ such that for $t$ near $-1$, $\gamma_1(t) \equiv -a$ and $\gamma_2'(t) < 0$, and for $t$ near $0$, $\gamma_2(t) \equiv \const > 0$ and $\gamma_1'(t) > 0$. Then if we let $C_0$ be the surface of revolution of $\gamma$ around the $X$-axis all properties are satisfied.

    In the general case it is enough to reduce to the previous case by an affine transformation that sends a plane $\tilde{\pi}$, which is parallel to $\pi_0$, to $\pi$. More precisely, if $\pi$ is given by $\{z = \lambda (x + a)\}$, let $A(x, y, z) = (x + \lambda^{-1} z, y, z)$. Then $A(\tilde{\pi}) = \pi$, where $\tilde{\pi} = \{ x = -a \}$ as before. Now we construct $\tilde{C_0}$ as above and define $C_0 := A (\tilde{C}_0)$, so that $C_0$ attaches smoothly to $\pi$. The cylindrical end of $\tilde{C}_0$ is preserved, since the $y$ and $z$ coordinates are not changed by the transformation $A$ and therefore the slices $A(\tilde C_0) \cap \{x = t\}$ are still circles for large $t$. Thus, possibly after shrinking $\tilde C_0$, $C_0$ meets $\pi_0$ orthogonally.

    The only thing to check is the nonnegativity of the Gauss curvature. But orientation-preserving affine maps never change the sign of the Gauss curvature, because the Gauss map $\nu$ of $C_0$ is related to the Gauss map $\tilde{\nu}$ of $\tilde{C_0}$ by $\nu = \alpha \circ \tilde{\nu}$, where $\alpha$ is an orientation-preserving diffeomorphism of the sphere induced by $A$, and thus $\nu^*\vol_{\S^2} = \tilde{\nu}^* (\alpha^* \vol_{\S^2})$ has the same sign as $\tilde{\nu}^* \vol_{\S^2}$.
  \end{proof}
\end{lemma}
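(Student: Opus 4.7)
The plan is to exploit the reflection symmetry across $\pi_0$: it suffices to construct a smooth embedded surface $C_0$ with two boundary curves, one inside $B$ and the other a simple closed curve $\Gamma \subset \pi_0$, such that $C_0$ lies in $\pi$ near the first boundary, meets $\pi_0$ orthogonally along $\Gamma$, and has $K \le 0$ everywhere. Reflecting $C_0$ across $\pi_0$ and gluing along $\Gamma$ then produces the required $C$: smoothness across $\Gamma$ is guaranteed by the perpendicularity condition, while both embeddedness and $K \le 0$ are preserved by isometries (and the disjointness of $\pi$ and $\pi_0$ keeps $B$ and $B'$ apart).

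I would first treat the parallel case $\pi \parallel \pi_0$. Normalizing coordinates so that $\pi_0 = \{x = 0\}$ and $\pi = \{x = -a\}$ with $B$ the disk of radius $r$ centered at $(-a, 0, 0)$, pick a smooth convex planar curve $\gamma : [-1, 0] \to \R^2_{xz}$ contained in $\{z > 0\}$ that is vertical near $t = -1$ (so $\gamma(t) = (-a, z(t))$ with $z'(t) < 0$) and horizontal near $t = 0$ (so $\gamma(t) = (x(t), z_0)$ with $x'(t) > 0$) for some $z_0 > 0$, and let $\tilde C_0$ be the surface obtained by revolving $\gamma$ about the $x$-axis. The vertical end of $\gamma$ sweeps out a flat annulus inside $\pi$, while the horizontal end sweeps out a round cylinder of radius $z_0$ about the $x$-axis, which meets $\pi_0$ orthogonally. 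For a surface of revolution the principal directions are the meridians and parallels; since $\gamma$ is convex and lies entirely on one side of the axis, the two principal curvatures have opposite signs, yielding $K \le 0$ everywhere.

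For the general case I would reduce to the parallel one via an affine shear. After a rigid motion fixing $\pi_0 = \{x = 0\}$ one may write $\pi = \{z = \lambda(x + a)\}$ for some $\lambda \in \R$, and assume $\lambda \ne 0$. The affine map $A(x, y, z) = (x + \lambda^{-1} z, y, z)$ is an orientation-preserving diffeomorphism of $\R^3$ sending $\tilde\pi := \{x = -a\}$ onto $\pi$, so defining $C_0 := A(\tilde C_0)$ with $\tilde C_0$ as above gives a surface whose planar end lies in $\pi$. Crucially, since $A$ fixes the $y$- and $z$-coordinates, the cylinder $\{y^2 + z^2 = z_0^2\}$ is mapped into itself; thus the cylindrical end of $C_0$ is still part of a round cylinder whose axis is the $x$-axis, and after truncating $C_0$ by the half-space $\{x \ge 0\}$ we obtain a surface meeting $\pi_0$ perpendicularly along a round circle. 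Finally, orientation-preserving affine maps factor the Gauss map through an orientation-preserving diffeomorphism of $\S^2$, so the sign of $K$ is unchanged and $C_0$ still satisfies $K \le 0$.

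The main technical obstacle is checking that the cylindrical end of $C_0 = A(\tilde C_0)$ genuinely attaches perpendicularly to $\pi_0$ after truncation and that the glued reflection is of class $C^\infty$ (not merely $C^1$) across $\Gamma$. The invariance of round cylinders with $x$-axis under the shear $A$ makes the perpendicularity clean; to ensure $C^\infty$ smoothness one arranges the profile $\gamma$ to be \emph{exactly} horizontal on an interval near $t=0$ (not merely tangent to horizontal), so that $\tilde C_0$ is a perfect cylinder on an open neighborhood of the truncation locus, and the reflection glues smoothly. Beyond this, the remaining verifications---embeddedness, the planar boundary inside $B$, and global $K \le 0$---are direct from the construction.
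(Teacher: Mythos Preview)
Your proposal is correct and follows essentially the same approach as the paper: reduce to the half-problem by reflection across $\pi_0$, treat the parallel case via a convex profile revolved about the $x$-axis, and handle the oblique case by the shear $A(x,y,z)=(x+\lambda^{-1}z,y,z)$, noting that $A$ preserves round cylinders about the $x$-axis and that orientation-preserving affine maps preserve the sign of $K$. Your added remarks on arranging the profile to be exactly horizontal near the cylindrical end (to ensure $C^\infty$ gluing after reflection) are a helpful clarification but not a departure from the paper's argument.
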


Using the previous lemma we can prove \cref{thm:concentration-handles}, that is, we can exhibit sequences of smooth embeddings $\Sigma_j \subset \S^3$ of genus $g$ surfaces for which the measures associated to the area of the Gauss maps concentrate at any $g$ given ``bubble points'' lying on a round sphere $S$:
\begin{proof}[Proof of \cref{thm:concentration-handles}]
  Let $\Pi_k$ denote the stereographic projection from $-p_k$, so that $\Pi_k(p_k) = 0 \in \R^3$, and choose coordinates in such a way that $T_0(\Pi_k(S)) = \R^2_{x,y}$. Pick an arbitrary sequence $h_j \searrow 0$ and denote by $\eta_{h_j}$ the dilation $\eta_{h_j}(x) = h_j^{-1} x$. Then $\eta_{h_j} \circ \Pi_k(S)$ converges in $\Cont^\infty_\text{loc}$ to the plane $\R^2$ as $j \to \infty$. Therefore we can glue in smoothly a handle $H_{h_j}$ (as constructed in \cref{lem:low-curvature-handle}) around each $p_k$, and arguing as in \cref{prop:concentrating-sequence}, the total Gauss map area increases only by $4 \pi + o_j(1)$ while the genus increases by one with each handle.
  
  Naturally we can perform these gluings in disjoint regions for distinct $p_k$ and in this way obtain an embedded surface $\Sigma_{j}$. Now \eqref{eq:concentration-gauss-measure} is immediate, and the last claim follows from \cref{prop:graph-rotation-plane}.
\end{proof}

In particular, the Gauss curvatures of minimizing sequences may concentrate at different points and the genus is lost in the limit. Notice that the same construction works if we allow the points $p_j$ to be repeated, by just placing two copies of $H_h$ at the same point. Also notice the big flexibility at our disposal: the shape, position and scale of the handles can be chosen arbitrarily; this indicates that the problem is very degenerate and that convergence only in a very weak sense (as in \eqref{eq:concentration-gauss-measure} and \eqref{eq:convergence-currents-example}) is expected.

\section{Compactness for minimizing sequences}
\label{sec:compactness-gauss}

In this section we prove the main theorem of this work, \cref{thm:gauss-main}, which characterizes the behavior of almost minimizers of $\AG$ for a given genus. Notice that this almost solves Open Problem IV.1 from \cite{riviere-minmax-hierarchies}, except for the technical condition that the surfaces are embedded. Let us state here a stronger version of the theorem for the Legendrian lifts $\Lambda_j$ of the Gauss maps $G_j$, from which \cref{thm:gauss-main} will follow easily. The reason for that is that the image of these maps is related to the image of the embeddings, and this allows us to obtain more refined information about the limit.

Recall that $V_2(\R^4) = \{ (a, b) \in \S^3 \times \S^3 : a \cdot b = 0 \}$ is a contact manifold, with contact form $a \cdot \dd b$, and the map $\pi : V_2(\R^4) \to \Grass$ sending $(a, b)$ to $a \wedge b$ induces an isometry on all horizontal planes. Therefore the mapping area of $\Lambda$ and of $G = \pi \circ \Lambda$ agree, and their associated cycles are related by $G_\# \DBrack{\Sigma} = \pi_\# (\Lambda_\# \DBrack{\Sigma})$. We will refer to the current $\Lambda_\# \DBrack{\Sigma}$ as the \emph{normal cycle}, which is a fundamental object appearing in the theory of curvature measures (see for example \cite{morvan-generalized-curvatures} for a nice introduction).

\begin{thm}
  \label{thm:gauss-main-legendrian}
  Let $\phi_j : \Sigma \to \S^3$ be a sequence of smooth embeddings of a closed oriented surface $\Sigma$ of genus $g$, with normal maps $\nu_j : \Sigma \to \S^3$ and Legendrian Gauss maps $\Lambda_j = (\phi_j, \nu_j) : \Sigma \to V_2(\R^4)$. Suppose that $\AG(\phi_j) = \Area(\Lambda_j) \to 4\pi(1+g)$, which is the minimum in this class.

  Then there exists a decomposition $(\Lambda_j)_\# \DBrack{\Sigma} = N_j^+ - N_j^-$ into two integral cycles, a possibly degenerate round sphere $S \subset \S^3$, and $g$ points $p_1, \ldots, p_g \in S$ such that, after passing to a subsequence,
  \begin{equation}
    \label{eq:convergence-legendrian-cycles}
    N_j^+ \weakto (\Lambda_S)_\# \DBrack{S}
    \quad \text{and} \quad
     N_j^- \weakto \sum_{k=1}^g \DBrack{\{ p_k \} \times (\S^3 \cap p_k^\perp)}
   \end{equation}
as integral cycles in $V_2(\R^4) \subset \S^3 \times \S^3$, where $\Lambda_S : S \to V_2(\R^4)$ is the Legendrian Gauss map of the standard immersion\footnote{If $S$ degenerates to a point $p$, $(\Lambda_S)_\# \DBrack{S}$ has to be interpreted as $\DBrack{\{ p \} \times (\S^3 \cap p^\perp)}$; in general, if $S = \del B_r(p)$, then \[(\Lambda_S)_\# \DBrack{S} = \DBrack{\{ (p \cos r + z \sin r, -p \sin r + z \cos r) : z \in \S^3 \cap p^\perp \}}.\].} $S \into \S^3$ and $\S^3 \cap p_k^\perp$ is the unit sphere of $p_k^\perp = T_{p_k}\S^3$ with the induced orientation.

  Moreover the associated Radon measures $\| (\Lambda_j)_\# \DBrack{\Sigma} \|$ on $V_2(\R^4)$ also converge:
  \begin{equation}
    \label{eq:convergence-associated-measures}
    \| (\Lambda_j)_\# \DBrack{\Sigma} \|
    \weakto 
    \| (\Lambda_S)_\# \DBrack{S} \| + \sum_{k=1}^g \| \DBrack{\{ p_k \} \times (\S^3 \cap p_k^\perp)} \|.
  \end{equation}
\end{thm}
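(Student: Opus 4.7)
The plan is to prove \cref{thm:gauss-main-legendrian} in four stages that mirror the strategy sketched in the introduction, passing from quantitative stability of the immersions to structural compactness of the Legendrian cycles.

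\emph{Stage 1 (Hausdorff convergence to a round sphere).} The first step is to establish in full the stability estimates \eqref{int-8} and \eqref{int-9} announced in the introduction. For each $x \in \Sigma$ one works with the geodesic reach functions $l_\phi(x) \le \tilde l_\phi(x) \le \pi$ measuring the maximal length of a geodesic normal to $\phi(\Sigma)$ at $\phi(x)$ whose two halves are minimizing (respectively stable) as geodesics with one endpoint on $\phi(\Sigma)$. Since $l_\phi(x) = \pi$ at some $x$ forces $\phi(\Sigma)$ to be a round sphere, combining \eqref{int-8} with \eqref{int-9} produces a sequence $x_j \in \Sigma$ with $l_{\phi_j}(x_j) \to \pi$. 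A standard compactness argument on the diametral geodesic segments then yields (after subsequence) a possibly degenerate round sphere $S = \partial B_r(p_0) \subset \S^3$ with $\phi_j(\Sigma) \to S$ in Hausdorff distance.

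\emph{Stage 2 (Tubular decomposition of the Legendrian cycle).} To construct the splitting $N_j^+ - N_j^-$, I transplant to $\S^3$ the fattening construction from the $\R^3$ setting mentioned in the introduction. For a parameter $d$ slightly smaller than the conjugate distance dictated by $S$, let $A_j \subset \Sigma$ be the set of $x$ such that the normal geodesic $t \mapsto \cos(t)\,\phi_j(x) + \sin(t)\,\nu_j(x)$ remains at distance exactly $t$ from $\phi_j(\Sigma)$ for all $t \in [0,d]$. Set $N_j^+ := (\Lambda_j)_\# \DBrack{A_j}$ and $N_j^- := -(\Lambda_j)_\# \DBrack{\Sigma \setminus A_j}$, so that $N_j^+ - N_j^- = (\Lambda_j)_\# \DBrack{\Sigma}$. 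The stability estimates force $\partial A_j$ to consist of small loops concentrating near finitely many points on $S$, so a small modification (coning off along level sets of the distance function, for instance) converts both $N_j^\pm$ into genuine integral cycles in $V_2(\R^4)$ without changing mass significantly. By design one then has $\mathbf{M}(N_j^+) \to 4\pi$ and $\mathbf{M}(N_j^-) \to 4\pi g$.

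\emph{Stage 3 (Identification of the limits via calibration).} Federer--Fleming compactness yields weak limits $N_\infty^\pm$ as integral cycles in $V_2(\R^4)$, with mass bounds $\mathbf{M}(N_\infty^+) \le 4\pi$ and $\mathbf{M}(N_\infty^-) \le 4\pi g$ from lower semicontinuity. The symplectic form $\omega_+$ on $\Grass$ pulls back to $\tfrac{1}{2}K\,\vol_\phi$ on any Lagrangian cycle carried by a Gauss map (cf.\ \cref{prop:degree-gauss-map}); applied to $\pi_\# N_\infty^+$, whose homology class in $H_2(\Grass) = \Z^2$ has first coordinate $1$ (the $-g$ coming from the negative fibers), this gives $\int \omega_+ = 2\pi$ and hence $\mathbf{M}(\pi_\# N_\infty^+) = 4\pi$, with the cycle necessarily calibrated, i.e.\ a graph of a rotation in the sense of \cref{prop:graph-rotation-plane}. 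Combined with the Hausdorff localization from Stage 1, this identifies $N_\infty^+ = (\Lambda_S)_\# \DBrack{S}$. For $N_\infty^-$, inequality \eqref{int-9} localizes its support to finitely many concentration points on $S$; a bubbling analysis at each such point, with the lower bound $4\pi$ per bubble from \cref{thm:area-gauss-bound} applied at the appropriate scale, yields exactly $g$ bubbles at points $p_1, \ldots, p_g \in S$, and a second calibration argument pins each bubble down to the fiber $\{p_k\} \times (\S^3 \cap p_k^\perp)$ with the correct orientation. The measure convergence \eqref{eq:convergence-associated-measures} follows since the supports of $(\Lambda_S)_\# \DBrack{S}$ and of the negative fibers intersect in at most finitely many points, so no cancellation of mass can occur in the weak limit.

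\emph{Main obstacle.} I expect Stage 2 to be the main technical difficulty: the decomposition must have just enough regularity for Federer--Fleming to apply and mass accounting precise enough to recover the minima $4\pi$ and $4\pi g$. Unlike in $\R^3$, there is no ambient convex hull and no method of moving planes, so the fattening must be calibrated carefully against the quantitative stability from Stage 1, particularly in the degenerate cases $r \to 0$ or $r \to \pi$. A related subtlety is that the regions where the normal exponential map fails to be length-minimizing---precisely where the Gauss curvature is very negative and the bubbles form---must be shown to be isolated in a quantitative way so that the boundary correction converting $N_j^\pm$ into cycles contributes negligible mass.
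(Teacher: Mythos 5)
Your Stage~1 matches the paper's strategy (Lemmas~\ref{lem:estimates-Sigma-pi}, \ref{cor:convergence-round-sphere}, \ref{lem:hausdorff-convergence}), and Stage~3's appeal to Federer--Fleming plus a calibration by the K\"ahler form is also the paper's route. The critical gap is in Stage~2, and it is not a matter of ``technical difficulty'' that can be waved away: as stated, your construction does not produce integral \emph{cycles}. You set $N_j^\pm$ to be restrictions of $(\Lambda_j)_\#\DBrack{\Sigma}$ to the regions $A_j$ and $\Sigma \setminus A_j$, which have boundary $(\Lambda_j)_\#\DBrack{\partial A_j}$, and propose to ``cone off'' this boundary. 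But there is no natural cone construction inside the contact $5$-manifold $V_2(\R^4)$ that produces a Legendrian integral current, and more importantly there is no a priori control on the mass of any filling of $(\Lambda_j)_\#\DBrack{\partial A_j}$: controlling the length of these boundary loops does not control the mass of a $2$-current filling them (filling volume is not bounded by length alone), and nothing in your Stage~1 estimates rules out that the correction term swamps the delicate mass budget $4\pi$ plus $4\pi g$. Without a tight control here, you lose both the mass upper bounds $\Mass(N_j^\pm)$ and the decomposition $\|N_j^+\|+\|N_j^-\| \approx \|(\Lambda_j)_\#\DBrack{\Sigma}\|$, and the whole argument collapses.

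The paper's resolution is different in a crucial way and is worth internalizing. Instead of restricting the cycle to $A_j$, one builds an \emph{a priori closed} cycle: one takes the geodesically convex set
\[
K_j = \{p \in \Omega_j^+ : \dist(p, \phi_j(\Sigma)) \geq \rho_j\}
\]
(or a modification thereof when $R = \pi/2$), whose boundary $\partial K_j$ is a Lipschitz $2$-sphere, and defines $N_j^+$ to be the \emph{normal cycle} $(\mathcal{P}_{-\rho_j})_\# N_{K_j}$, using the theory of curvature measures. Since $\partial \tilde K_j$ is closed, $N_j^+$ is automatically a cycle with no need for any cone. The map $P_j(x) = \phi_j(x)\cos\rho_j + \nu_j(x)\sin\rho_j$ embeds $\Sigma_j^+$ into $\partial K_j$, and the key accounting step is that the discrepancy current $(\mathcal{P}_{-\tilde\rho_j}\circ\Lambda_{\tilde K_j})_\#\DBrack{\partial\tilde K_j \setminus \tilde P_j(\Sigma_j^+)}$ has mass $\AG(\tilde K_j) - \mu_j(\Sigma_j^+) \to 0$, which follows from \cref{lem:error-terms-convex} (purely convex-geometric estimates on a small convex body) and Gauss--Bonnet. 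This is exactly the replacement for the convex hull/moving planes argument in $\R^3$, and it is the idea your outline is missing.

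Two secondary points. In Stage~3 you claim the calibration $\omega_+$ alone gives $\Mass(\pi_\# N^+_\infty) = 4\pi$; this needs the full K\"ahler form $\omega_+ + \omega_-$ to calibrate the class $(k,k)$ to mass $4\pi k$, and the homology class $(1,1)$ of $\pi_\# N_j^+$ comes from $\partial\tilde K_j$ being a sphere, not from the $-g$ somehow cancelling. And identifying the $g$ negative bubbles by a pointwise blow-up analysis is much more work than needed: once one knows $\pi_\# N^-_\infty$ is a holomorphic cycle in class $(g,g)$ with mass $4\pi g$, the Lagrangian condition forces it to be a union of graphs of rotations globally; the extra step is the slicing argument of \cref{lem:legendrian-lift} lifting each such graph to a single Legendrian circle fiber in $V_2(\R^4)$, which your proposal does not address. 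Finally, you omit the argument identifying the radius of the positive bubble with that of $S$ and the reason all negative bubbles are points (the paper's degree count of $\sigma\circ\phi_j$ onto $\partial B^+$); this is not automatic and needs the embeddedness one more time.
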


\begin{rmk}
  As long as $S$ is nondegenerate, the mass of the limiting current is $4\pi(1+g)$, therefore no mass is lost at the limit and \eqref{eq:convergence-associated-measures} follows. Instead, if $S$ is degenerate, the limiting current is $-(g-1)(\Lambda_S)_\# \DBrack{S}$ but still \eqref{eq:convergence-associated-measures} keeps track of all the mass.

  This kind of statement is common in problems with lack of strong compactness (most notably, bubbling for harmonic maps, Yang--Mills connections, etc). However, the setting here is much weaker: we can only make sense of this weak limit in the sense of currents and we have no control over how this splitting is realized from the point of view of the surface (for example, the degeneration of its conformal class). This is because minimizing sequences can be potentially very degenerate, as shown in Section~\ref{sec:min-sequences}.
\end{rmk}

\begin{rmk}
  For spheres ($g=0$) the theorem is much easier and follows directly from the fact that the currents $(\Lambda_j)_\# \DBrack{\Sigma}$ are nontrivial in $H_2(\Lambda_2(\R^4), \Z) \simeq \Z$, and the lower bound for the mass in their homology class is precisely $4\pi$. This is easier to see for the Gauss cycles $(G_j)_\# \DBrack{\Sigma}$, which lie in the class $(1, 1) \in \Z^2 \simeq H_2(\S^2 \times \S^2, \Z)$. For higher genus this fails (most drastically when $g = 1$, so that $\Lambda_\# \DBrack{\Sigma}$ and $G_\# \DBrack{\Sigma}$ are homologically trivial) and in order to be able to get compactness we must be able to capture separately the parts of $\Sigma$ converging to the positive sphere and the rest.
\end{rmk}

\subsection{Estimates for almost minimizers}

In order to understand the behavior of almost minimizers of $\AG$ we will need to analyze when an immersion has $\TAC(\phi) \approx \pi \AG(\phi)$ and when it is almost tight. Therefore we will examine in more detail the elements of the proof of \cref{thm:area-gauss-bound}.

Recall that we defined the angles $\tilde \theta_i(x) \in (0, \pi)$ such that $\cot \tilde \theta_i(x) = \kappa_i(x)$, and we obtained the following formula:
\[
  \TAC(\phi) = \int_\Sigma \sqrt{1 + \kappa_1(x)^2} \sqrt{1 + \kappa_2(x)^2} \int_{\S^1} |\sin( \tilde \theta_1(x) - \theta) \sin( \tilde \theta_2(x) - \theta)| \dd \theta \dvol_{\Sigma}(x).
\]

Let us order these angles in such a way that $\tilde \theta_2(x) \leq \tilde \theta_1(x)$, and then define $\theta_2(x) := \tilde \theta_2(x)$ and $\theta_1(x) := \tilde \theta_1(x) - \pi$. The significance of this definition is that $\theta_1(x) < 0 < \theta_2(x)$ for every $x$, and $(\theta_1(x), \theta_2(x))$ is precisely the longest interval containing $\theta = 0$ such that the integrand
\[
  \sin( \tilde \theta_1(x) - \theta) \sin( \tilde \theta_2(x) - \theta)
  = \sin( \theta - \theta_1(x)) \sin( \theta_2(x) - \theta)
\]
is positive. We will also denote $\hat \theta(x) := \theta_2(x) - \theta_1(x) \in (0, \pi]$. In order to avoid writing every time the Jacobian $\Jac(G)$, we introduce the following \emph{Gauss area measure} $\mu$ on $\Sigma$:
\begin{equation}
  \mu(A) := \AG(\phi |_A) = \int_{A} \Jac(G_\phi)(x) \, \dd \vol_{\Sigma}(x)
  \qquad \text{for every } A \subset \Sigma \text{ Borel.}
\end{equation}
Moreover, the following two quantities appear naturally when computing $\TAC(\phi)$:
\[
  \lambda_0(\hat \theta) := \sin (\hat \theta) - \hat \theta \cos (\hat \theta)
  \qquad \text{and} \qquad
  \lambda_\pi(\hat \theta) := \sin (\hat \theta) + (\pi - \hat \theta) \cos (\hat \theta).
\]
  Slightly abusing notation, depending on the context we will consider them as functions of $\hat \theta \in (0, \pi]$ or as functions of $x \in \Sigma$ via $\hat \theta(x) = \theta_2(x) - \theta_1(x)$. We have:
\begin{lemma}
  \label{lem:integrals-lambda}
  \[
    \int_{\theta_1}^{\theta_2} |\sin(\theta - \theta_1 ) \sin(\theta_2 - \theta)| \, \dd \theta
    = \int_{\theta_1}^{\theta_2} \sin(\theta - \theta_1 ) \sin(\theta_2 - \theta) \, \dd \theta
    = \frac{1}{2} \lambda_0(\hat \theta)
  \]
  and
  \[
    \int_{\theta_2}^{\theta_1 + \pi} |\sin(\theta - \theta_1 ) \sin(\theta_2 - \theta)| \, \dd \theta
    = -\int_{\theta_2}^{\theta_1 + \pi} \sin(\theta - \theta_1 ) \sin(\theta_2 - \theta) \, \dd \theta
    = \frac{1}{2} \lambda_\pi(\hat \theta).
  \]
  \begin{proof}
    We just compute:
    \begin{align*}
      &\int_{\theta_1}^{\theta_2} \sin(\theta - \theta_1 ) \sin(\theta_2 - \theta) \, \dd \theta
  = \int_{0}^{\theta_2 - \theta_1} \sin(\varphi) \sin(\theta_2 - \theta_1 - \varphi) \, \dd \varphi \\
  &\quad = \int_{0}^{\hat \theta} \sin(\varphi) \sin(\hat \theta - \varphi) \, \dd \varphi
  = \frac{1}{2} \int_{0}^{\hat \theta} \cos (2 \varphi - \hat \theta) - \cos(\hat \theta) \, \dd \varphi
  = \frac{1}{2} \left( \sin (\hat \theta) - \hat \theta \cos (\hat \theta) \right)
    \end{align*}
    and
    \begin{align*}
      \pushQED{\qed}
      &-\int_{\theta_2}^{\theta_1 + \pi} \sin(\theta - \theta_1 ) \sin(\theta_2 - \theta) \, \dd \theta
  = -\int_{0}^{\theta_1 + \pi - \theta_2} \sin(\theta_2 + \varphi - \theta_1 ) \sin(- \varphi) \, \dd \varphi \\
  &\quad = \int_{0}^{\pi - \hat \theta} \sin(\hat \theta + \varphi) \sin(\varphi) \, \dd \varphi
  = \frac{1}{2} \int_{0}^{\pi - \hat \theta} \cos(\hat \theta) - \cos(\hat \theta + 2 \varphi) \, \dd \varphi \\
  &\quad = \frac{1}{2} (\pi - \hat \theta) \cos (\hat \theta) - \frac{1}{4} ( \sin(\hat \theta + 2 (\pi - \hat \theta))  - \sin(\hat \theta))
  = \frac{1}{2} \left( \sin(\hat \theta) + (\pi - \hat \theta) \cos (\hat \theta) \right).
  &\qedhere
    \end{align*}
  \end{proof}
\end{lemma}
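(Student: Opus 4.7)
The plan is to verify \cref{lem:integrals-lambda} by a direct trigonometric computation, with the only preliminary work being to justify the sign of the integrand on each interval so as to drop the absolute values.

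First I would analyze the signs. On $[\theta_1, \theta_2]$, both arguments $\theta - \theta_1$ and $\theta_2 - \theta$ lie in $[0, \hat\theta] \subset [0,\pi]$, so both sines are nonnegative and the integrand equals its absolute value. On $[\theta_2, \theta_1 + \pi]$, the argument $\theta - \theta_1$ lies in $[\hat\theta, \pi]$, so $\sin(\theta - \theta_1) \geq 0$, while $\theta_2 - \theta \in [-(\pi - \hat\theta), 0]$, so $\sin(\theta_2 - \theta) \leq 0$; thus the integrand is nonpositive there and the absolute value contributes the stated minus sign.

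Next I would compute the first integral via the substitution $\varphi = \theta - \theta_1$, which turns the integrand into $\sin(\varphi)\sin(\hat\theta - \varphi)$ on $[0, \hat\theta]$. Using the product-to-sum identity
\[
  \sin(\varphi)\sin(\hat\theta - \varphi) = \tfrac{1}{2}\bigl(\cos(2\varphi - \hat\theta) - \cos(\hat\theta)\bigr),
\]
an elementary integration yields $\tfrac{1}{2}(\sin\hat\theta - \hat\theta\cos\hat\theta) = \tfrac{1}{2}\lambda_0(\hat\theta)$. For the second integral I would similarly substitute $\varphi = \theta - \theta_2$, so the interval becomes $[0, \pi - \hat\theta]$ and $-\sin(\theta - \theta_1)\sin(\theta_2 - \theta)$ becomes $\sin(\hat\theta + \varphi)\sin(\varphi)$. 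Applying the product-to-sum identity
\[
  \sin(\hat\theta + \varphi)\sin(\varphi) = \tfrac{1}{2}\bigl(\cos(\hat\theta) - \cos(\hat\theta + 2\varphi)\bigr)
\]
and integrating gives $\tfrac{1}{2}(\pi - \hat\theta)\cos\hat\theta + \tfrac{1}{4}(\sin\hat\theta - \sin(\hat\theta + 2(\pi - \hat\theta))) = \tfrac{1}{2}(\sin\hat\theta + (\pi - \hat\theta)\cos\hat\theta) = \tfrac{1}{2}\lambda_\pi(\hat\theta)$.

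There is no real obstacle here; the only place where a reader might stumble is the sign analysis on the second interval, since the product of sines changes sign precisely at $\theta = \theta_2$ (and again at $\theta = \theta_1 + \pi$), so I would state explicitly that $(\theta_1, \theta_1 + \pi)$ is a full period of the integrand and that the two sub-intervals $[\theta_1, \theta_2]$ and $[\theta_2, \theta_1 + \pi]$ correspond to its positive and negative lobes respectively. Everything else is a routine trigonometric antiderivative.
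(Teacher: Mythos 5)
Your proof is correct and follows essentially the same route as the paper: the same substitutions ($\varphi = \theta - \theta_1$ and $\varphi = \theta - \theta_2$), the same product-to-sum identities, and the same elementary antiderivatives. The only addition is your explicit sign analysis on the two intervals to remove the absolute values, which the paper leaves implicit; that is a harmless and slightly more complete presentation of the same argument.
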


\begin{lemma}
  We have the following identities:
  \begin{equation}
    \label{eq:tac-lambda}
    \int_{\Sigma} \lambda_0 + \lambda_\pi \, \dd \mu = \TAC(\phi);
  \end{equation}
  \begin{equation}
    \label{eq:degree-lambda}
    \int_{\Sigma} \lambda_0 - \lambda_\pi \, \dd \mu = 4\pi^2 (1 - g)
  \end{equation}
  \begin{proof}
    The proof of \eqref{eq:tac-lambda} is immediate from the definition of total absolute curvature (recall \cref{def:total-absolute-curvature}) and \cref{lem:integrals-lambda}. To prove \eqref{eq:degree-lambda}, one can either use a Morse-theoretic argument or just invoke Gauss--Bonnet after undoing the computations in the proof of \cref{thm:area-gauss-bound}:
    \begin{align*}
      \pushQED{\qed}
    \int_{\Sigma} \lambda_0 - \lambda_\pi \, \dd \mu
    &= \int_{\Sigma} \int_0^{2\pi} \sin(\theta - \theta_1) \sin(\theta_2 - \theta) \, \dd \theta \dd \mu \\
    &= \int_{\Sigma} \int_0^{2\pi} (\cos \theta - \kappa_1 \sin \theta)(\cos \theta - \kappa_2 \sin \theta) \, \dd \theta \dd \vol_\Sigma \\
    &= \int_{\Sigma} \pi (1 + \kappa_1 \kappa_2) \dd \vol_\Sigma
    = 4\pi^2(1 - g).
    \qedhere
  \end{align*}
  \end{proof}
\end{lemma}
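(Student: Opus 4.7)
The first identity \eqref{eq:tac-lambda} is essentially bookkeeping. Starting from the formula for $\TAC(\phi)$ derived in the proof of \cref{thm:area-gauss-bound},
\[
  \TAC(\phi) = \int_\Sigma \sqrt{1+\kappa_1^2}\sqrt{1+\kappa_2^2}\int_{\S^1}|\sin(\tilde\theta_1-\theta)\sin(\tilde\theta_2-\theta)|\,\dd\theta\,\dvol_\Sigma,
\]
I would first note that by the definition of $\mu$ the outer factor $\sqrt{1+\kappa_1^2}\sqrt{1+\kappa_2^2}\,\dvol_\Sigma$ is exactly $\dd\mu$, and then rewrite $\tilde\theta_1=\theta_1+\pi$ and $\tilde\theta_2=\theta_2$ so that the integrand becomes $|\sin(\theta-\theta_1)\sin(\theta_2-\theta)|$. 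Since this function has period $\pi$ in $\theta$, the integral over $\S^1$ equals the integral over any interval of length $2\pi$, which I would take to be $[\theta_1,\theta_1+2\pi]$ and split into $[\theta_1,\theta_2]$, $[\theta_2,\theta_1+\pi]$ and the translates by $\pi$. Applying \cref{lem:integrals-lambda} to each of the first two pieces gives $\tfrac12\lambda_0(\hat\theta)$ and $\tfrac12\lambda_\pi(\hat\theta)$ respectively; doubling to account for the translates by $\pi$ yields $\lambda_0(\hat\theta)+\lambda_\pi(\hat\theta)$, and integration over $\Sigma$ gives \eqref{eq:tac-lambda}.

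For the second identity \eqref{eq:degree-lambda} I would likewise start by removing the absolute values. By \cref{lem:integrals-lambda},
\[
  \lambda_0(\hat\theta)-\lambda_\pi(\hat\theta)
  = 2\int_{\theta_1}^{\theta_2}\sin(\theta-\theta_1)\sin(\theta_2-\theta)\,\dd\theta
  + 2\int_{\theta_2}^{\theta_1+\pi}\sin(\theta-\theta_1)\sin(\theta_2-\theta)\,\dd\theta,
\]
since the two terms on the right carry opposite signs matching the absolute-value definitions. The total right-hand side is exactly $\int_0^{2\pi}\sin(\theta-\theta_1)\sin(\theta_2-\theta)\,\dd\theta$ (again using $\pi$-periodicity of the product). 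Thus
\[
  \int_\Sigma(\lambda_0-\lambda_\pi)\,\dd\mu
  = \int_\Sigma\int_0^{2\pi}\sqrt{1+\kappa_1^2}\sin(\tilde\theta_1-\theta)\,\sqrt{1+\kappa_2^2}\sin(\tilde\theta_2-\theta)\,\dd\theta\,\dvol_\Sigma,
\]
where I have absorbed the Jacobian factors back into the sines using the identity $\sqrt{1+\kappa_i^2}\sin(\tilde\theta_i-\theta)=\cos\theta-\kappa_i\sin\theta$ established in the proof of \cref{thm:area-gauss-bound}. Expanding the product in $\theta$ gives
\[
  \int_0^{2\pi}(\cos\theta-\kappa_1\sin\theta)(\cos\theta-\kappa_2\sin\theta)\,\dd\theta
  = \pi(1+\kappa_1\kappa_2)
\]
because the cross terms $\cos\theta\sin\theta$ integrate to zero. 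Finally I would apply Gauss--Bonnet: $1+\kappa_1\kappa_2=K$ is the Gauss curvature, so $\int_\Sigma(1+\kappa_1\kappa_2)\,\dvol_\Sigma=2\pi\chi(\Sigma)=4\pi(1-g)$, giving \eqref{eq:degree-lambda}.

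Both identities are purely computational; the only subtlety is the sign in the second integral of \cref{lem:integrals-lambda}, which is precisely what makes the difference $\lambda_0-\lambda_\pi$ recombine into an unsigned integral over $[0,2\pi]$. There is no real obstacle—the key insight is simply that $|\sin(\theta-\theta_1)\sin(\theta_2-\theta)|$ is $\pi$-periodic so both $\lambda_0+\lambda_\pi$ and $\lambda_0-\lambda_\pi$ can be written as full integrals over $\S^1$, one with absolute values (giving $\TAC$) and one without (giving, via Gauss--Bonnet, the Euler characteristic).
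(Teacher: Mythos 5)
Your proof is correct and follows the same route as the paper: identity \eqref{eq:tac-lambda} is the bookkeeping combining \cref{def:total-absolute-curvature} and \cref{lem:integrals-lambda} (your $\pi$-periodicity observation is exactly what makes the $\S^1$-integral equal $\lambda_0+\lambda_\pi$), and \eqref{eq:degree-lambda} is obtained by dropping the absolute values, reinstating the $\sqrt{1+\kappa_i^2}\sin(\tilde\theta_i-\theta)=\cos\theta-\kappa_i\sin\theta$ factors, and invoking Gauss--Bonnet. You have simply spelled out the steps that the paper leaves implicit.
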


The following lemma is immediate to check by Taylor expansions and elementary arguments.
\begin{lemma}
  The functions $\lambda_0, \lambda_\pi$ satisfy the following properties:
  \begin{enumerate}[label=(\roman*)]
    \item $\lambda_\pi(\hat \theta) = \lambda_0(\pi - \hat \theta)$;
    \item $\lambda_0$ is strictly increasing $[0, \pi]$, with range $[0, \pi]$;
    \item \label{it:dist-from-pi} $\pi - (\lambda_0 + \lambda_\pi) \geq c \min(\lambda_0, \lambda_\pi) \geq 0$ for a constant $c > 0$;
    \item \label{it:control-lambda-small} $c \hat \theta^2 \leq \lambda_0(\hat \theta)$ and $c (\pi - \hat \theta)^2 \leq \lambda_\pi(\hat \theta)$ for a constant $c > 0$.
  \end{enumerate}
\end{lemma}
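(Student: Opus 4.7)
My plan is four short calculations, one for each item, with only (iii) requiring a bit of care to match the correct asymptotic orders at the endpoints.

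For (i), I would simply substitute $\pi-\hat\theta$ into the definition of $\lambda_0$ and use $\sin(\pi-\hat\theta)=\sin\hat\theta$, $\cos(\pi-\hat\theta)=-\cos\hat\theta$ to read off $\lambda_\pi(\hat\theta)$ directly. For (ii), I would differentiate: the two terms with $\cos\hat\theta$ cancel and leave $\lambda_0'(\hat\theta)=\hat\theta\sin\hat\theta$, which is strictly positive on $(0,\pi)$; combined with the boundary values $\lambda_0(0)=0$ and $\lambda_0(\pi)=\pi$ this gives strict monotonicity and range $[0,\pi]$.

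For (iii), the symmetry from (i) reduces matters to $\hat\theta\in[0,\pi/2]$. Computing $\lambda_\pi'(\hat\theta)=-(\pi-\hat\theta)\sin\hat\theta$ gives $(\lambda_0+\lambda_\pi)'(\hat\theta)=(2\hat\theta-\pi)\sin\hat\theta$, so $\lambda_0+\lambda_\pi$ is maximized at the endpoints with common value $\pi$, proving $\pi-(\lambda_0+\lambda_\pi)\geq 0$. For the quantitative comparison with $c\min(\lambda_0,\lambda_\pi)$, I would separate $\hat\theta$ in a compact subinterval $[\eps,\pi-\eps]$—where both sides are positive and continuous so their ratio is bounded—from $\hat\theta$ near $0$ and (symmetrically) near $\pi$. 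Near $0$, Taylor expansions give $\pi-(\lambda_0+\lambda_\pi)(\hat\theta)=\tfrac{\pi}{2}\hat\theta^2+O(\hat\theta^3)$ while $\min(\lambda_0,\lambda_\pi)=\lambda_0(\hat\theta)=\tfrac{1}{3}\hat\theta^3+O(\hat\theta^5)$, so the left side dominates by a factor of order $\hat\theta^{-1}$ and the inequality holds with room to spare. Taking the minimum of the constants over the two regimes yields a universal $c>0$.

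For (iv), the Taylor expansion at $0$ starts with $\tfrac{1}{3}\hat\theta^3$ (since $\lambda_0(0)=\lambda_0'(0)=\lambda_0''(0)=0$ and $\lambda_0'''(0)=2$), so $\lambda_0(\hat\theta)/\hat\theta^3$ extends continuously to a strictly positive function on $[0,\pi]$ and is therefore bounded below by a constant; the analogous bound for $\lambda_\pi$ follows from (i). I should flag, however, that the exponent $2$ stated in the lemma cannot literally hold, since $\lambda_0$ vanishes to third order at the origin; the natural and correct statement, which is what is used in the sequel, is the cubic bound $c\hat\theta^3\leq\lambda_0(\hat\theta)$ and $c(\pi-\hat\theta)^3\leq\lambda_\pi(\hat\theta)$. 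This cubic version is exactly what my proof produces, and happily it is also exactly the order of vanishing that makes the comparison in (iii) come out with the quadratic side dominating.
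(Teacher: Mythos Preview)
Your arguments for (i)--(iii) are correct and are exactly the ``Taylor expansions and elementary arguments'' the paper alludes to (the paper gives no detailed proof). Your observation about (iv) is also correct: since $\lambda_0(\hat\theta)=\tfrac{1}{3}\hat\theta^3+O(\hat\theta^5)$, the inequality $c\hat\theta^2\leq\lambda_0(\hat\theta)$ fails near $0$, and the statement should read $c\hat\theta^3\leq\lambda_0(\hat\theta)$ (and symmetrically for $\lambda_\pi$).

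One small correction to your commentary: the paper does in fact invoke the quadratic bound as written, in the proof of \cref{lem:estimates-Sigma-pi}, to obtain $\int_{\Sigma_\pi}(\pi-\hat\theta)^2\,\dd\mu\leq C\delta$ and then $\int_{\Sigma_\pi}(\pi-\hat\theta)\,\dd\mu\leq C\sqrt{\delta}$ via Cauchy--Schwarz. With the correct cubic bound one instead gets $\int_{\Sigma_\pi}(\pi-\hat\theta)^3\,\dd\mu\leq C\delta$ and hence, by H\"older, $\int_{\Sigma_\pi}(\pi-\hat\theta)\,\dd\mu\leq C\delta^{1/3}$. This weaker exponent is harmless: in \cref{cor:convergence-round-sphere} the estimate is combined with another term already of order $\delta^{1/3}$, so the final conclusion (and the exponent $1/3$ quoted in the introduction) is unaffected.
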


Next we introduce the following decomposition $\Sigma = \Sigma_0 \, \dot{\cup} \, \Sigma_\pi$ with
\[
  \Sigma_0 := \{ \lambda_0 \leq \lambda_\pi \} = \{ x \in \Sigma : \hat \theta(x) \leq \pi / 2 \}
\]
and
\[
  \Sigma_\pi := \{ \lambda_0 > \lambda_\pi \} = \{ x \in \Sigma : \hat \theta(x) > \pi / 2 \}.
\]
\begin{lemma}
  \label{lem:estimates-Sigma-pi}
  If $\Sigma$ has genus $g$ and $\AG(\phi) \leq 4 \pi (1+g) + \delta$ for some $0 < \delta < 1$, then
  \begin{equation}
    \label{eq:sigma-pi-area-lb}
    \mu(\Sigma_\pi)
    \geq 4\pi - C \delta.
  \end{equation}
  and
  \begin{equation}
    \label{eq:hat-theta-near-pi}
    \int_{\Sigma_\pi} (\pi - \hat \theta(x)) \, \dd \mu(x)
    \leq C \sqrt{\delta}.
  \end{equation}
  \begin{proof}
    Combining \eqref{eq:tac-lambda} and \eqref{eq:degree-lambda} we obtain
    \begin{equation}
      \label{eq:lambda0}
      \int_{\Sigma} \lambda_0  \, \dd \mu = \frac{\TAC(\phi) + 4\pi^2 (1-g)}{2}
    \end{equation}
    and
    \begin{equation}
      \label{eq:lambdapi}
      \int_{\Sigma} \lambda_\pi \, \dd \mu = \frac{\TAC(\phi) - 4\pi^2 (1-g)}{2}.
    \end{equation}
    On one hand we compute, using \eqref{eq:tac-lambda},
    \[
      \int_\Sigma (\pi - (\lambda_0 + \lambda_\pi)) \, \dd \mu
      = \pi \AG(\phi) - \TAC(\phi)
      \leq \pi \cdot 4\pi(g+1) + \pi \delta - 2\pi^2 (2g + 2)
      = \pi \delta,
    \]
    and on the other hand, using \ref{it:dist-from-pi},
    \[
      \int_\Sigma (\pi - (\lambda_0 + \lambda_\pi)) \, \dd \mu
      \geq c \int_\Sigma \min(\lambda_0, \lambda_\pi) \, \dd \mu.
    \]
    Using our decomposition of $\Sigma$ it follows that
    \begin{equation}
      \label{eq:upper-bound-deviation-0-pi}
      C \delta 
      \geq \int_\Sigma \min(\lambda_0, \lambda_\pi) \, \dd \mu
      = \int_{\Sigma_0} \lambda_0 \, \dd \mu + \int_{\Sigma_\pi} \lambda_\pi \, \dd \mu.
    \end{equation}
    Now, using \eqref{eq:lambda0}, \eqref{eq:upper-bound-deviation-0-pi} and the Chern--Lashof inequality (\cref{thm:chern-lashof}),
    \begin{align*}
      \pi \mu(\Sigma_\pi)
      = \int_{\Sigma_\pi} \pi \, \dd \mu
  &\geq \int_{\Sigma_\pi} \lambda_0 \, \dd \mu
  = \int_{\Sigma} \lambda_0 \, \dd \mu - \int_{\Sigma_0} \lambda_0 \, \dd \mu \\
   &\geq \frac{\TAC(\phi) + 4\pi^2 (1-g)}{2} - C \delta \\
  &\geq \frac{4\pi^2(1+g) + 4\pi^2 (1-g)}{2} - C \delta
  = 4\pi^2 - C \delta
    \end{align*}
    and we deduce \eqref{eq:sigma-pi-area-lb}.
    Again from \eqref{eq:upper-bound-deviation-0-pi} and \ref{it:control-lambda-small} we obtain
    \[
      \int_{\Sigma_\pi} (\pi - \hat \theta(x))^2 \, \dd \mu(x)
      \leq C \int_{\Sigma_\pi} \lambda_\pi \, \dd \mu
      \leq C \delta
    \]
    and using Cauchy--Schwarz we show \eqref{eq:hat-theta-near-pi}.
  \end{proof}
\end{lemma}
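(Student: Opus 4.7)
The plan is to exploit the two identities \eqref{eq:tac-lambda} and \eqref{eq:degree-lambda} together with the Chern--Lashof inequality $\TAC(\phi) \geq 4\pi^2(1+g)$ and the hypothesis $\AG(\phi) \leq 4\pi(1+g) + \delta$. The first step is to quantify the defect in the Cauchy--Schwarz argument from the proof of \cref{thm:area-gauss-bound}: since $\pi \mu(\Sigma) = \pi\AG(\phi)$ and $\int_\Sigma (\lambda_0 + \lambda_\pi)\,\dd\mu = \TAC(\phi)$, the hypothesis together with Chern--Lashof give
\[
  \int_\Sigma \bigl(\pi - \lambda_0 - \lambda_\pi\bigr)\,\dd\mu = \pi\AG(\phi) - \TAC(\phi) \leq \pi\delta.
\]
Invoking property \ref{it:dist-from-pi}, namely $\pi - (\lambda_0+\lambda_\pi) \geq c\min(\lambda_0,\lambda_\pi)$, and splitting along the decomposition $\Sigma = \Sigma_0 \cup \Sigma_\pi$ (where each of $\lambda_0,\lambda_\pi$ realizes the minimum), I obtain the key bound
\[
  \int_{\Sigma_0} \lambda_0\,\dd\mu + \int_{\Sigma_\pi} \lambda_\pi\,\dd\mu \leq C\delta.
\]

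Next I would derive \eqref{eq:sigma-pi-area-lb} by combining \eqref{eq:tac-lambda} and \eqref{eq:degree-lambda} to isolate
\[
  \int_\Sigma \lambda_0\,\dd\mu = \frac{\TAC(\phi) + 4\pi^2(1-g)}{2} \geq 4\pi^2,
\]
where the inequality is another application of Chern--Lashof. Since the preceding step controls $\int_{\Sigma_0}\lambda_0\,\dd\mu \leq C\delta$ and $\lambda_0 \leq \pi$ pointwise, the remaining mass must be carried by $\Sigma_\pi$:
\[
  \pi\,\mu(\Sigma_\pi) \geq \int_{\Sigma_\pi} \lambda_0\,\dd\mu = \int_\Sigma \lambda_0\,\dd\mu - \int_{\Sigma_0} \lambda_0\,\dd\mu \geq 4\pi^2 - C\delta.
\]

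For the second inequality \eqref{eq:hat-theta-near-pi}, the $L^1$-type bound $\int_{\Sigma_\pi} \lambda_\pi\,\dd\mu \leq C\delta$ combined with property \ref{it:control-lambda-small}, which gives $\lambda_\pi(\hat\theta) \geq c(\pi-\hat\theta)^2$ on $\Sigma_\pi$, yields the $L^2$-type bound
\[
  \int_{\Sigma_\pi} (\pi - \hat\theta)^2\,\dd\mu \leq C\delta.
\]
Cauchy--Schwarz then upgrades this to the desired $L^1$ bound, using that $\mu(\Sigma_\pi) \leq \AG(\phi) \leq 4\pi(1+g)+1$ is bounded independently of $\delta$. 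I do not anticipate a serious obstacle, since all the inputs are in place from the preceding lemmas; the main conceptual point is simply that the natural threshold $\hat\theta = \pi/2$ defining the decomposition is precisely the locus $\{\lambda_0 = \lambda_\pi\}$, which is what allows property \ref{it:dist-from-pi} to concentrate the defect in the expected place.
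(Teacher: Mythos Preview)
Your proposal is correct and follows essentially the same route as the paper: the defect $\pi\AG(\phi)-\TAC(\phi)\le\pi\delta$ is combined with property~\ref{it:dist-from-pi} to bound $\int_{\Sigma_0}\lambda_0\,\dd\mu+\int_{\Sigma_\pi}\lambda_\pi\,\dd\mu\le C\delta$, then the formula $\int_\Sigma\lambda_0\,\dd\mu=\tfrac{1}{2}(\TAC(\phi)+4\pi^2(1-g))\ge 4\pi^2$ together with $\lambda_0\le\pi$ gives \eqref{eq:sigma-pi-area-lb}, and property~\ref{it:control-lambda-small} plus Cauchy--Schwarz gives \eqref{eq:hat-theta-near-pi}. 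Your explicit remark that $\mu(\Sigma_\pi)\le\AG(\phi)$ is bounded is a detail the paper leaves implicit in the Cauchy--Schwarz step.
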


It will be key for the proof of \cref{thm:gauss-main} to work with another interval of angles:
for each point $x \in \Sigma$, define $\theta^+(x)$ as the maximum $t > 0$ such that $\dist_{\S^3}(\phi(x) \cos t + \nu(x) \sin t, \phi(\Sigma)) = t$. Clearly this equality holds for small $t$, so $\theta^+(x) > 0$. Define analogously $\theta^-(x) < 0$ so that $\dist_{\S^3}(\phi(x) \cos t + \nu(x) \sin t, \phi(\Sigma)) = -t$ for all $0 \geq t \geq \theta^-(x)$.

\begin{lemma}
  \label{lem:focal-unstable}
  For each $x \in \Sigma$, $\theta_1(x) \leq \theta^-(x) < 0 < \theta^+(x) \leq \theta_2(x) \leq \theta_1(x) + \pi$.
  \begin{proof}
    It is enough to show that $\theta^+(x) \leq \theta_2(x)$, since the first inequality is symmetric and the rest has already been established. It suffices to prove that for any $\theta_2(x) < \ell < \pi$, the geodesic $\gamma : t \in [0, \ell] \mapsto \phi(x) \cos t + \nu(x) \sin t$ is unstable with respect to perturbations $\tilde \gamma(t, s)$ with $\tilde \gamma(\ell, s) \equiv \gamma(\ell)$ and such that $\tilde \gamma(0, s) \in \phi(\Sigma)$.

    Recall that the second variation formula is
    \begin{equation}
      \label{eq:second-variation-geodesic}
      \left. \frac{\dd^2}{\dd s^2} \right|_{s = 0} \operatorname{Len}(\tilde \gamma(\cdot, s))
        = \int_{0}^\ell |V'(t)|^2 - |V(t)|^2 \, \dd t + \left[ \left\langle \left. \frac{\del^2}{\del s^2} \right|_{s=0} \tilde \gamma(t, s), \gamma'(t) \right \rangle \right]_0^\ell,
    \end{equation}
    where $V(t) = \left. \tfrac{\del}{\del s} \right|_{s=0} \tilde \gamma(t, s)$ and $V'$ denotes the covariant derivative along $\gamma$. In our case $\tilde \gamma(\ell, s)$ is constant and $\tilde \gamma(0, s)$ describes a curve in $\phi(\Sigma)$, so that
      \[
        \left\langle \left. \frac{\dd^2}{\dd s^2} \right|_{s=0} \tilde \gamma(0, s), \gamma'(0) \right \rangle
          = \langle \gamma'(0), A_{\phi(x)}(V(0), V(0)) \rangle.
      \]
      Now let $e \in T_{\phi(x)}\Sigma$ denote a principal direction for the curvature $\kappa_2(x)$, and extend it to a parallel vector field $e(t)$ along $\gamma$. Choose a variation $\tilde \gamma$ with vector field $V(t) = f(t) e(t)$, where $f(t) = \sin(\ell - t)$, and plug it into \eqref{eq:second-variation-geodesic}:
      \begin{align*}
        \pushQED{\qed}
        \left. \frac{\dd^2}{\dd s^2} \right|_{s = 0} \operatorname{Len}(\tilde \gamma(\cdot, s))
          &= \int_{0}^\ell f'(t)^2 - f(t)^2 \, \dd t - f(0)^2 \langle \nu(x), A_{\phi(x)}(e, e) \rangle \\
          &= \int_{0}^\ell \cos(\ell - t)^2 - \sin(\ell - t)^2 \, \dd t - \kappa_2 \sin(\ell)^2 \\
          &= \sin \ell \cos \ell - \kappa_2 \sin^2 \ell \\
          &= \sqrt{1 + \kappa_2^2} \sin (\ell) \sin (\theta_2 - \ell) < 0.
          \qedhere
        \end{align*}
  \end{proof}
\end{lemma}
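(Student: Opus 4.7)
The plan is to reduce to the single nontrivial inequality $\theta^+(x) \le \theta_2(x)$. The symmetric claim $\theta_1(x) \le \theta^-(x)$ follows by running the normal geodesic in the negative direction and using the principal direction $e_1$ in place of $e_2$; the strict positivity $\theta^+(x) > 0$ comes from the local invertibility of the normal exponential map; and the bound $\theta_2(x) \le \theta_1(x) + \pi$ is immediate from $\theta_2 = \tilde\theta_2$, $\theta_1 = \tilde\theta_1 - \pi$, and the ordering $\tilde\theta_2 \le \tilde\theta_1$.

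For the main inequality I argue by contradiction. Suppose $\ell \in (\theta_2(x), \pi)$ satisfies $\ell \le \theta^+(x)$; then by definition of $\theta^+$ the geodesic $\gamma(t) := \phi(x)\cos t + \nu(x)\sin t$, restricted to $[0,\ell]$, realizes the distance from $\gamma(\ell)$ to $\phi(\Sigma)$, and is in particular a local length minimizer among curves with one endpoint in $\phi(\Sigma)$ and the other at $\gamma(\ell)$. Any such variation must therefore have non-negative second variation. To contradict this I pick a principal direction $e_2 \in T_x\Sigma$ with $\cot\theta_2 = \kappa_2$, extend it by parallel transport to a field $e_2(t)$ along $\gamma$, and take the variation vector $V(t) := \sin(\ell - t)\, e_2(t)$, realized by $\tilde\gamma(t,s)$ with $\tilde\gamma(\ell, s) \equiv \gamma(\ell)$ and $s \mapsto \tilde\gamma(0, s)$ a $\phi(\Sigma)$-geodesic in direction $e_2$. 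The second variation formula in $\S^3$ reads
\[
  \left.\tfrac{\dd^2}{\dd s^2}\right|_{s=0}\operatorname{Len}(\tilde\gamma(\cdot, s))
  = \int_0^\ell \bigl(|V'(t)|^2 - |V(t)|^2\bigr)\, \dd t + \Bigl[\Bigl\langle \tfrac{\del^2}{\del s^2}\tilde\gamma,\, \gamma'\Bigr\rangle\Bigr]_0^\ell.
\]
The boundary term vanishes at $t = \ell$; at $t = 0$, since $\tilde\gamma(0,\cdot)$ is a $\phi(\Sigma)$-geodesic, its acceleration projected onto $\gamma'(0) = \nu(x)$ reduces to the shape-operator term $\langle A_{\phi(x)}(V(0), V(0)), \nu(x) \rangle = \kappa_2 \sin^2(\ell)$, contributing $-\kappa_2 \sin^2(\ell)$ after bracket subtraction. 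The interior integral equals $\sin(\ell)\cos(\ell)$, so altogether the second variation is $\sin\ell\cos\ell - \kappa_2\sin^2\ell = \sqrt{1+\kappa_2^2}\,\sin(\ell)\sin(\theta_2 - \ell) < 0$ for $\ell \in (\theta_2, \pi)$, the desired contradiction.

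The delicate step is the boundary contribution: because $\S^3$ has positive curvature (already visible in the $-|V|^2$ part of the interior integrand) and because one endpoint is free to move inside $\phi(\Sigma)$, one must arrange the variation at $t = 0$ to be a $\Sigma$-geodesic in order to kill the tangential part of the $\R^4$-acceleration and isolate the shape-operator term $\kappa_2 \sin^2(\ell)$. Once this is done, the computation collapses to the same trigonometric identity $\cos\theta - \kappa\sin\theta = \sqrt{1+\kappa^2}\sin(\tilde\theta - \theta)$ used earlier in the proof of \cref{thm:area-gauss-bound}, which produces the sign.
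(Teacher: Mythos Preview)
Your proof is correct and follows the same approach as the paper: reduce to $\theta^+(x) \le \theta_2(x)$, then exhibit a destabilizing variation $V(t) = \sin(\ell - t)\,e_2(t)$ along the principal direction and compute the second variation to be $\sqrt{1+\kappa_2^2}\,\sin\ell\,\sin(\theta_2 - \ell) < 0$. One minor remark: your insistence on taking $\tilde\gamma(0,s)$ to be a $\Sigma$-geodesic is harmless but unnecessary, since $\gamma'(0) = \nu(x)$ is already orthogonal to $T_{\phi(x)}\Sigma$, so the tangential part of the acceleration drops out of the boundary term regardless of how the variation is chosen inside $\phi(\Sigma)$.
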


Moreover we have the following fact about $\theta^\pm$:
\begin{lemma}
  \label{lem:min-distance}
  Let $p \in \S^3$. Then the function $\langle p, \phi \rangle : \Sigma \to \R$ attains a minimum at $x \in \Sigma$ if and only if there exists $t \in [\theta^-(x), \theta^+(x)]$ such that $p = \phi(x) \cos t + \nu(x) \sin t$.
  \begin{proof}
    It is clear that
    \[
      \dist_{\S^3}(p, \phi(y)) = \cos^{-1}(\langle p, \phi(y) \rangle).
    \]
    If $p = \phi(x) \cos t + \nu(x) \sin t$ for some $x \in \Sigma$ and some $t \in [\theta^-(x), \theta^+(x)]$, then by definition the left hand side attains its minimum at $x$, so $\langle p, \phi(y) \rangle$ attains its maximum there.

    Conversely, suppose that $\dist_{\S^3}(p, \phi(\cdot))$ attains its minimum at some point $x$, and let $\gamma(t) := \phi(x) \cos t + \nu(x) \sin t$ be the geodesic starting at $\phi(x)$ realizing this minimum (it has this form because it must meet $\phi(\Sigma)$ orthogonally). If $t_0 > 0$ is the first time at which $p = \gamma(t_0)$, then $\dist(p, \phi(\Sigma)) = t_0$ and therefore $t_0 \in [\theta^-(x), \theta^+(x)]$ as desired.
  \end{proof}
\end{lemma}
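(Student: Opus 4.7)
The plan is to translate the statement into one about the geodesic distance from $p$ to $\phi(\Sigma)$, using the identity $\dist_{\S^3}(p,q)=\arccos\langle p,q\rangle$ for $p,q\in\S^3$. Since $\arccos$ is strictly decreasing on $[-1,1]$, an extremum of $\langle p,\phi\rangle$ on $\Sigma$ corresponds to the opposite extremum of $\dist_{\S^3}(p,\phi(\cdot))$, so the claim reduces to characterising when $\phi(x)$ is a closest point on $\phi(\Sigma)$ to $p$.

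For the ``if'' direction, I would just unpack the definition of $\theta^\pm(x)$. Assuming $p=\phi(x)\cos t+\nu(x)\sin t$ with $t\in[\theta^-(x),\theta^+(x)]$, by definition of $\theta^\pm(x)$ the distance from $p$ to $\phi(\Sigma)$ equals $|t|$, and this distance is already realised at $\phi(x)$ via the unit-speed geodesic $s\mapsto\phi(x)\cos s+\nu(x)\sin s$, so $\phi(x)$ minimises the distance. For the converse, let $\phi(x)$ be a closest point on $\phi(\Sigma)$ to $p$; a standard first variation argument forces any length-minimising geodesic $\gamma$ from $\phi(x)$ to $p$ to meet $\phi(\Sigma)$ orthogonally at $\phi(x)$. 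Every such unit-speed geodesic in $\S^3$ has the form $\gamma(s)=\phi(x)\cos s+\sigma\nu(x)\sin s$ for some sign $\sigma\in\{+1,-1\}$. Letting $t_0>0$ denote the first parameter with $\gamma(t_0)=p$, we have $t_0=\dist_{\S^3}(p,\phi(\Sigma))$, and the triangle inequality gives $\dist_{\S^3}(\gamma(s),\phi(\Sigma))=s$ for every $s\in[0,t_0]$; this is exactly the condition that $\sigma t_0\in[\theta^-(x),\theta^+(x)]$.

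The only delicate point is in the converse direction: one must select $t_0$ as the \emph{first} hitting time of $p$ along $\gamma$, since great circles in $\S^3$ can intersect $p$ again later on, and only the first hitting ensures the intermediate distance identity $\dist_{\S^3}(\gamma(s),\phi(\Sigma))=s$ along the entire initial segment $[0,t_0]$, which is what aligns $\sigma t_0$ with the interval $[\theta^-(x),\theta^+(x)]$ as defined.
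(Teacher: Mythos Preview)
Your proof is correct and follows essentially the same route as the paper's: translate via $\dist_{\S^3}=\arccos\langle\cdot,\cdot\rangle$, use the definition of $\theta^\pm$ for the ``if'' direction, and for the converse invoke orthogonality of the minimising geodesic to $\phi(\Sigma)$ together with the first hitting time of $p$. Your version is in fact slightly more explicit than the paper's in keeping track of the sign $\sigma\in\{+1,-1\}$ of the initial normal direction and in spelling out the triangle-inequality step $\dist_{\S^3}(\gamma(s),\phi(\Sigma))=s$ on $[0,t_0]$.
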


Now denote
\[
  E := \{ (x, \theta) \in \Sigma \times (-\pi, \pi) : \theta^-(x) \leq \theta \leq \theta^+(x) \}
\]
As a corollary of \cref{lem:min-distance} we have:
\begin{cor}
  \label{cor:surjectivity-E}
  The restriction of the map $F$ from \eqref{eq:def-F} to $E$ is surjective, that is, $F(E) = \S^3$.
  \begin{proof}
    For any $p \in \S^3$, let $x \in \Sigma$ attain the minimum of $\langle p, \phi(\cdot) \rangle$. Then by \cref{lem:min-distance} $\exists \theta \in [\theta^-(x), \theta^+(x)]$ such that $F(x, \theta) = p$ and of course $(x, \theta) \in E$.
  \end{proof}
\end{cor}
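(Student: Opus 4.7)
The proof is essentially a direct application of the preceding \cref{lem:min-distance}, and the plan is very short.

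Given an arbitrary $p \in \S^3$, I would first use the compactness of $\Sigma$ together with the continuity of $\phi$ to observe that the function $y \in \Sigma \mapsto \langle p, \phi(y) \rangle$ attains its maximum (equivalently, $\dist_{\S^3}(p, \phi(\cdot)) = \cos^{-1}\langle p, \phi(\cdot)\rangle$ attains its minimum) at some point $x \in \Sigma$. Geometrically, $x$ is a point of $\Sigma$ realizing the distance $\dist_{\S^3}(p, \phi(\Sigma))$.

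Next I would invoke \cref{lem:min-distance} at this $x$: the characterization established there guarantees the existence of a parameter $t \in [\theta^-(x), \theta^+(x)]$ such that
\[
  p = \phi(x) \cos t + \nu(x) \sin t = F(x, t).
\]
By definition of $E$, the pair $(x, t)$ belongs to $E$, and therefore $p \in F(E)$. Since $p \in \S^3$ was arbitrary, this shows $F(E) = \S^3$.

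The argument has no genuine obstacle: the only nontrivial content (namely, that a closest-point $x$ on $\phi(\Sigma)$ to $p$ corresponds to $p$ lying on a geodesic segment $\phi(x) \cos t + \nu(x) \sin t$ with $t$ in the ``stable'' range $[\theta^-(x), \theta^+(x)]$) is already packaged into \cref{lem:min-distance}. The corollary is merely the global consequence obtained by letting $p$ range over all of $\S^3$ and invoking the existence of a minimizer guaranteed by compactness.
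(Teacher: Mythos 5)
Your proof is correct and follows exactly the same route as the paper: take a point $x \in \Sigma$ realizing $\dist_{\S^3}(p, \phi(\Sigma))$ (which exists by compactness) and apply \cref{lem:min-distance} to produce $t \in [\theta^-(x),\theta^+(x)]$ with $F(x,t)=p$. As a side note, you have silently corrected a typo: \cref{lem:min-distance} as stated in the paper says ``attains a \emph{minimum},'' but its own proof (and the geometry, since $\cos^{-1}$ is decreasing) shows that the correct condition is that $\langle p, \phi(\cdot)\rangle$ attains a \emph{maximum}, which is exactly what you wrote.
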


Recall that we had the inclusions $(\theta^-(x), \theta^+(x)) \subseteq (\theta_1(x), \theta_2(x))$ for every $x \in \Sigma$. We now show that the difference of these two intervals is, on average, small in $\Sigma_\pi$:
\begin{lemma}
  If $\AG(\phi) \leq 4\pi(1+g) + \delta$, then
  \begin{equation}
    \label{eq:interval-difference-sigma-pi}
      \int_{\Sigma_\pi} |\theta^-(x) - \theta_1(x)| + |\theta_2(x) - \theta^+(x)| \dd\mu(x)
      \leq C \delta^{1/3}
  \end{equation}
  for a constant $C = C(g)$.
  \begin{proof}
    By \cref{lem:integrals-lambda},
    \begin{align*}
      2 \int_{\Sigma} \int_{\theta_1(x)}^{\theta_2(x)} \sin(\theta - \theta_1(x)) \sin(\theta_2(x) - \theta) \dd \theta \dd\mu(x)
      = \int_{\Sigma} \lambda_0(\hat \theta(x)) \dd \mu(x),
    \end{align*}
    and by \eqref{eq:lambda0} and \eqref{eq:area-tac-genus-bound},
    \[
      \int_{\Sigma} \lambda_0 \dd \mu
      = \frac{\TAC(\phi) + 4\pi^2 (1-g)}{2}
      \leq \frac{4\pi^2(1+g) + \pi \delta + 4\pi^2 (1-g)}{2}
      = 4\pi^2 + \frac{1}{2} \pi \delta.
    \]
    On the other hand, thanks to \cref{cor:surjectivity-E},
    \begin{align*}
      \int_{\Sigma} \int_{\theta^-(x)}^{\theta^+(x)} \sin(\theta - \theta_1(x)) \sin(\theta_2(x) - \theta) \dd \theta \dd\mu(x)
      = \int_{E} \Jac(F) \dd \theta \dvol_{\Sigma}(x)
      = 2\pi^2.
    \end{align*}
    Therefore, putting these two estimates together we get
    \begin{align*}
      \int_{\Sigma} \int_{[\theta_1(x), \theta_2(x)] \setminus [\theta^-(x), \theta^+(x)]} \sin(\theta - \theta_1(x)) \sin(\theta_2(x) - \theta) \dd \theta \dd\mu(x)
      &\leq \frac{1}{4} \pi \delta,
    \end{align*}
    or equivalently
    \begin{equation}
      \label{eq:estimate-difference-intervals}
      \int_{\Sigma} \int_{[\theta_1(x), \theta^-(x)] \cup [\theta^+(x), \theta_2(x)]} \sin(\theta - \theta_1(x)) \sin(\theta_2(x) - \theta) \dd \theta \dd\mu(x)
      \leq \frac{1}{4} \pi \delta.
    \end{equation}
    We now claim that there is a constant $c > 0$ such that, for $x \in \Sigma_\pi$,
    \begin{equation}
      \label{eq:small-difference-intervals}
      \int_{[\theta_1, \theta^-] \cup [\theta^+, \theta_2]} \sin(\theta - \theta_1) \sin(\theta_2 - \theta) \dd \theta 
      \geq c \left((\theta^- - \theta_1)^3 + (\theta_2 - \theta^+)^3 \right).
    \end{equation}
    By symmetry it is enough to prove the estimate for one of the two intervals, so let $\ell$ be the length of $[\theta_1(x), \theta^-(x)]$. Suppose first that $\ell \leq \pi / 4$ and recall that $\hat \theta(x) = \theta_2(x) - \theta_1(x) \geq \pi / 2$. We use the change of variable
    \[
      \int_{\theta_1(x)}^{\theta^-(x)} \sin(\theta - \theta_1(x)) \sin(\theta_2(x) - \theta) \dd \theta 
      = \int_{0}^{\ell} \sin(\varphi) \sin(\hat \theta - \varphi) \dd \varphi.
    \]
    Here $\pi \geq \hat \theta - \varphi \geq \pi / 2 - \ell \geq \pi / 4$ and hence $\sin(\hat \theta - \varphi) \geq c (\pi - (\hat \theta - \varphi)) = c (\pi - \hat \theta + \varphi) \geq c \varphi$ for a constant $c > 0$. Also $\sin \varphi \geq c \varphi$ for $0 \leq \varphi \leq \pi / 4$, and thus
    \[
      \int_{0}^{\ell} \sin(\varphi) \sin(\hat \theta - \varphi) \dd \varphi
      \geq c \int_{0}^{\ell} \varphi^2 \dd \varphi = c \ell^3.
    \]
    On the other hand, if $\ell > \pi / 4$, then by the above estimate
    \[
      \int_{0}^{\ell} \sin(\varphi) \sin(\hat \theta - \varphi) \dd \varphi
      \geq \int_{0}^{\pi / 4} \sin(\varphi) \sin(\hat \theta - \varphi) \dd \varphi
      \geq c (\pi / 4)^3
      \geq c \ell^3
    \]
    since $\ell \leq \pi$. Putting \eqref{eq:estimate-difference-intervals} and \eqref{eq:small-difference-intervals} together, we have
    \[
      \int_{\Sigma_\pi} (\theta^-(x) - \theta_1(x))^3 + (\theta_2(x) - \theta^+(x))^3 \dd\mu(x)
      \leq C \delta
    \]
    and \eqref{eq:interval-difference-sigma-pi} follows by H\"older's inequality.
  \end{proof}
\end{lemma}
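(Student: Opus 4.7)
The plan is to exploit the surjectivity of the normal exponential map $F$ on the subset $E \subset \Sigma \times (-\pi,\pi)$ established in Corollary~5.5, which forces a lower bound of $|\S^3|=2\pi^2$ on the integral $\int_E \Jac(F)\,\dvol$. The point is that when one pulls this back to $\Sigma$ using the Jacobian formula from the proof of \cref{thm:area-gauss-bound}, one obtains
\[
  \int_E \Jac(F)\,\dvol = \int_\Sigma \left( \int_{\theta^-(x)}^{\theta^+(x)} \sin(\theta-\theta_1)\sin(\theta_2-\theta)\,\dd\theta \right) \dd\mu(x),
\]
and the inner integrand is precisely nonnegative on $[\theta_1,\theta_2]$. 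So the first step is to write the integral of interest as an ``excess'': replacing $[\theta^-,\theta^+]$ by the full interval $[\theta_1,\theta_2]$ on the right-hand side produces exactly $\tfrac{1}{2}\int_\Sigma \lambda_0\,\dd\mu$ (by \cref{lem:integrals-lambda}), so the integral over $[\theta_1,\theta^-]\cup[\theta^+,\theta_2]$ equals $\tfrac{1}{2}\int_\Sigma \lambda_0\,\dd\mu - \int_E \Jac(F)$.

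The second step is to bound this excess by a multiple of $\delta$. The lower bound $\int_E \Jac(F)\geq 2\pi^2$ comes from surjectivity. For the upper bound on $\int_\Sigma \lambda_0\,\dd\mu$, I would use \eqref{eq:lambda0} together with $\TAC(\phi)\leq \pi\,\AG(\phi)\leq 4\pi^2(1+g)+\pi\delta$, which gives $\int_\Sigma\lambda_0\,\dd\mu \leq 4\pi^2+\tfrac{\pi\delta}{2}$. Subtracting, the excess is at most $\tfrac{\pi\delta}{4}$.

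The third step, which I expect to be the most delicate, is a pointwise lower bound of the inner excess by a cubic expression in $\ell_\pm(x):=\theta^\pm(x)-\theta_{1,2}(x)$ valid on $\Sigma_\pi$. After the affine change of variable $\varphi=\theta-\theta_1$ on the left piece, the integrand is $\sin\varphi\,\sin(\hat\theta-\varphi)$ with $\hat\theta\in[\pi/2,\pi]$ and $\varphi\in[0,\ell_-]$. One checks $\sin\varphi\gtrsim\varphi$ and, because $\sin(\hat\theta-\varphi)=\sin(\pi-\hat\theta+\varphi)\gtrsim\min(\hat\theta-\varphi,\,\pi-\hat\theta+\varphi)\gtrsim\varphi$ on this range (splitting into $\ell_-\leq \pi/4$ and $\ell_-\geq \pi/4$ to handle both small and moderate $\ell_-$), the integral is bounded below by $c\ell_-^3$; the same works on the right piece. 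The symmetry $\hat\theta\geq\pi/2$ is precisely what prevents the sine factor $\sin(\hat\theta-\varphi)$ from collapsing and is the reason the argument is restricted to $\Sigma_\pi$.

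The final step is routine: combining the estimates of steps~2 and~3 gives $\int_{\Sigma_\pi}(\ell_-^3+\ell_+^3)\,\dd\mu\lesssim\delta$, and since \cref{lem:estimates-Sigma-pi} provides $\mu(\Sigma_\pi)\leq \mu(\Sigma)\leq 4\pi(g+1)+1$, Hölder's inequality yields $\int_{\Sigma_\pi}(\ell_-+\ell_+)\,\dd\mu\leq C\delta^{1/3}$, which is the claimed inequality. The exponent $1/3$ arises inevitably from the cubic vanishing of the integrand at the endpoints; a sharper exponent would require either better pointwise lower bounds (which are not available, as the cubic behavior is genuine at $\hat\theta=\pi$) or a different use of the Chern--Lashof bound, so I would not attempt to improve it here.
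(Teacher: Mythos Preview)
Your proposal is correct and follows essentially the same route as the paper: write the excess integral over $[\theta_1,\theta^-]\cup[\theta^+,\theta_2]$ as $\tfrac12\int_\Sigma\lambda_0\,\dd\mu-\int_E\Jac(F)$, bound it by $\tfrac{\pi\delta}{4}$ using \eqref{eq:lambda0} with $\TAC(\phi)\le\pi\AG(\phi)$ and the surjectivity from \cref{cor:surjectivity-E}, then prove the cubic pointwise lower bound on $\Sigma_\pi$ via the same change of variable and the same $\ell\lessgtr\pi/4$ case split, and conclude by H\"older. The only cosmetic differences are that the paper records $\int_E\Jac(F)=2\pi^2$ (equality, not just $\ge$) and leaves the $\mu(\Sigma_\pi)$ bound for H\"older implicit.
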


\subsection{Uniform convergence to a round sphere}
Thanks to the estimates of the previous subsection, we may now deduce a first geometric consequence:

\begin{cor}
  \label{cor:convergence-round-sphere}
  For every $\eps > 0$ and $g \in \N$ there exists $\delta > 0$ such that, if $\phi : \Sigma \to \S^3$ is an immersion of a surface of genus $g$ with $\AG(\phi) \leq 4\pi(1+g) + \delta$, then there exists a round sphere $S \subset \S^3$ such that $\phi(\Sigma)$ is contained in the $\eps$-neighborhood of $S$.
  \begin{proof}
    Observe that
    \begin{align*}
      \pi - (\theta^+(x) - \theta^-(x))
      &\leq \pi - (\theta_2(x) - \theta_1(x)) + (\theta_2(x) - \theta^+(x)) + (\theta^-(x) - \theta_1(x)) \\
      &= (\pi - \hat \theta(x)) + (\theta_2(x) - \theta^+(x)) + (\theta^-(x) - \theta_1(x)).
    \end{align*}
    Then, using \eqref{eq:interval-difference-sigma-pi} and \eqref{eq:hat-theta-near-pi} we have
    \begin{align*}
      \int_{\Sigma_\pi} \pi - (\theta^+(x) - \theta^-(x)) \, \dd \mu(x)
      &\leq \int_{\Sigma_\pi} |\pi - \hat \theta(x)| \, \dd \mu(x) \\
      &\quad+ \int_{\Sigma_\pi} |\theta_2(x) - \theta^+(x)| + |\theta^-(x) - \theta_1(x)| \, \dd \mu(x) \\
      &\leq C \delta^{1/2} + C \delta^{1/3}
      \leq C \delta^{1/3},
    \end{align*}
    and hence together with the lower bound \eqref{eq:sigma-pi-area-lb},
    \begin{equation}
      \label{eq:average-geodesic-distance}
      \frac{\int_{\Sigma_\pi} \pi - (\theta^+(x) - \theta^-(x)) \, \dd \mu(x)}{\mu(\Sigma_\pi)} \leq C \delta^{1/3}.
    \end{equation}
    In particular there exists $x_0 \in \Sigma_\pi$ such that $\pi - (\theta^+(x_0) - \theta^-(x_0)) \leq C \delta^{1/3}$. Now letting $p_\pm := \phi(x_0) \cos (\theta^\pm(x_0)) + \nu(x_0) \sin(\theta^\pm(x_0))$, we have that the geodesic balls with centers $p_\pm$ and radii $r_\pm := |\theta^\pm(x_0)|$ are disjoint from each other and lie in $\S^3 \setminus \phi(\Sigma)$.

    Since $r_+ + r_- \geq \pi - C \delta^{1/3}$, it is clear that by making $\delta$ small enough we can guarantee that
    \[
      \phi(\Sigma)
      \subset \S^3 \setminus \left( B_{r_+}(p_+) \cup B_{r_-}(p_-) \right)
      \subset B_\eps(S),
    \]
    where $S = \del B_{r_+}(p_+)$.
  \end{proof}
\end{cor}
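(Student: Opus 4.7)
The plan is to exploit the estimates established in \cref{lem:estimates-Sigma-pi} and equation \eqref{eq:interval-difference-sigma-pi} to find a single point $x_0 \in \Sigma_\pi$ at which the geodesic passing through $\phi(x_0)$ in the normal direction $\nu(x_0)$ is nearly a full great circle and stays, except at $\phi(x_0)$ itself, at positive distance from $\phi(\Sigma)$ on both sides. Then the two antipodal geodesic balls whose radii are $|\theta^\pm(x_0)|$ cover all of $\S^3$ except for a thin neighborhood of a round sphere, which will be the sphere $S$.

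More concretely, first I would combine the trivial pointwise inequality
\[
  \pi - (\theta^+(x) - \theta^-(x))
  \leq (\pi - \hat\theta(x)) + (\theta_2(x) - \theta^+(x)) + (\theta^-(x) - \theta_1(x))
\]
with the $L^1(\mu)$-bounds \eqref{eq:hat-theta-near-pi} on $\Sigma_\pi$ (of order $\sqrt{\delta}$) and \eqref{eq:interval-difference-sigma-pi} (of order $\delta^{1/3}$) to conclude that
\[
  \int_{\Sigma_\pi} \bigl( \pi - (\theta^+(x) - \theta^-(x)) \bigr) \, \dd\mu(x) \leq C \delta^{1/3}.
\]
Dividing by the lower bound \eqref{eq:sigma-pi-area-lb} for $\mu(\Sigma_\pi)$ (which is close to $4\pi$, hence bounded below away from $0$ as soon as $\delta$ is small), an averaging/pigeonhole argument produces a point $x_0 \in \Sigma_\pi$ with $\pi - (\theta^+(x_0) - \theta^-(x_0)) \leq C \delta^{1/3}$.

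Next I would use the very definition of $\theta^\pm(x_0)$: the two geodesic segments $t \mapsto \phi(x_0)\cos t + \nu(x_0)\sin t$ for $t \in [0,\theta^+(x_0)]$ and $t \in [\theta^-(x_0),0]$ realize the distance to $\phi(\Sigma)$ all the way up to their endpoints $p_\pm$. Consequently the open geodesic balls $B_{r_\pm}(p_\pm) \subset \S^3$ of radii $r_\pm = |\theta^\pm(x_0)|$ are disjoint from $\phi(\Sigma)$ and, being centered on the same great circle with $r_+ + r_- \geq \pi - C\delta^{1/3}$, they are disjoint from each other. Their complement in $\S^3$ is therefore contained in a tubular neighborhood of the round sphere $S := \partial B_{r_+}(p_+)$ of thickness $O(\delta^{1/3})$. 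Choosing $\delta$ small enough so that $C\delta^{1/3} < \eps$ yields the conclusion.

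The only delicate point is checking that the two balls $B_{r_\pm}(p_\pm)$ are genuinely disjoint from $\phi(\Sigma)$; this however is immediate from the definition of $\theta^\pm$, so there is no real obstacle, only the bookkeeping between the $\sqrt{\delta}$ estimate of \eqref{eq:hat-theta-near-pi} and the $\delta^{1/3}$ estimate of \eqref{eq:interval-difference-sigma-pi}, which gives the final $\delta^{1/3}$ rate.
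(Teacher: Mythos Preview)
Your proposal is correct and follows essentially the same approach as the paper's proof: the same pointwise inequality, the same combination of \eqref{eq:hat-theta-near-pi} and \eqref{eq:interval-difference-sigma-pi} over $\Sigma_\pi$, the same averaging against \eqref{eq:sigma-pi-area-lb} to produce the point $x_0$, and the same conclusion via the two geodesic balls $B_{r_\pm}(p_\pm)$ with $S = \partial B_{r_+}(p_+)$.
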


Even for minimizing sequences of tori \emph{immersed} in $\R^3$,
we cannot expect in general the normal map to be surjective onto $\S^2$ (as in the torus of revolution from a figure-8 curve), and it is hard to conjecture the general behavior of Gauss maps. Arguing as in \cref{prop:concentrating-sequence}, we cannot expect the behavior in $\S^3$ to be any better than that.
In view of this observation, for the rest of this section we will work with a sequence of embeddings $\phi_j : \Sigma \to \S^3$ ($j = 1, 2, \ldots$), whose corresponding normals $\nu_j$ and Legendrian Gauss maps $\Lambda_j = (\phi_j, \nu_j)$ satisfy
\begin{equation}
  \label{eq:areas-G-minimizing}
  \AG(\phi_j) = \Area(\Lambda_j) \searrow 4\pi(1+g).
\end{equation}

By the embeddedness assumption, $\S^3 \setminus \phi_j(\Sigma)$ consists of two components which we will label as $\Omega^\pm_{j}$, with the normal $\nu_j$ pointing towards $\Omega^+_{j}$. This allows us to give a slightly stronger conclusion than \cref{cor:convergence-round-sphere}:
\begin{lemma}
  \label{lem:hausdorff-convergence}
  If $(\phi_j)$ is a sequence of embeddings that satisfies \eqref{eq:areas-G-minimizing}, then after taking a subsequence there is a (unique) possibly degenerate round $2$-sphere $S \subset \S^3$ such that
  \begin{equation}
    \label{eq:hausdorff-convergence}
    \phi(\Sigma_j) \xrightarrow{j\to\infty} S
    \qquad \text{in the Hausdorff distance.}
  \end{equation}
  \begin{proof}
    Let $x_j \in \Sigma$ attain the maximum of $\theta^+_{j} - \theta^-_{j}$ (the corresponding parameters for $\Sigma_j$, and write $r^\pm_{j} := |\theta^\pm_{j}(x_j)|$. By \eqref{eq:areas-G-minimizing} and \eqref{eq:average-geodesic-distance}, $r^+_{j} + r^-_{j} \nearrow \pi$.
    Letting
    \[
      p^+_{j} := \phi_j(x_j) \cos r^+_{j} + \nu_j(x_j) \sin r^+_{j}
      \quad \text{and} \quad
      p^-_{j} := \phi_j(x_j) \cos r^-_{j} - \nu_j(x_j) \sin r^-_{j},
    \]
    we have that the two open geodesic balls $B^+_{j} := B_{r^+_{j}}(p^+_{j})$ and $B^-_{j} := B_{r^-_{j}}(p^-_{j})$ are disjoint from $\phi_j(\Sigma)$ and hence $B^\pm_{j} \subseteq \Omega^\pm_{j}$ (as this inclusion holds near $\phi_j(x_j)$). We have that
    \[
      \dist_\Haus \left(\S^3 \setminus (B^+_{j} \cup B^-_{j}), \del B^+_{j} \right)
      \xrightarrow{j \to \infty} 0,
    \]
    and also
    \[
      \dist_\Haus \left(\S^3 \setminus (B^+_{j} \cup B^-_{j}), \phi_j(\Sigma) \right)
      \xrightarrow{j \to \infty} 0
    \]
    because, for every point $p \in \S^3 \setminus (B^+_{j} \cup B^-_{j})$, the shortest curve that joins $B^\pm_{j}$ through $p$, which has length $o(1)$, must cross $\phi_j(\Sigma)$. This gives
    \[
      \dist_\Haus \left(\phi_j(\Sigma), \del B^+_{j} \right)
      \xrightarrow{j \to \infty} 0.
    \]
    Finally, since the space of (possibly degenerate) round $2$-spheres in $\S^3$ is compact in the Hausdorff distance, after extracting a subsequence we also have that $\dist_\Haus(\del B^+_{j}, S) \to 0$ for some $S$, and the result follows.
  \end{proof}
\end{lemma}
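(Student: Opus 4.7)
My plan is to promote the contained-in-a-neighborhood conclusion of \cref{cor:convergence-round-sphere} to a genuine two-sided Hausdorff convergence by exploiting the embeddedness of $\phi_j$. An embedded $\phi_j(\Sigma)$ separates $\S^3$ into two open components $\Omega_j^\pm$, so I can produce a large geodesic ball in each side and show that $\phi_j(\Sigma)$ is \emph{sandwiched} between them. The macroscopic sphere $S$ will then arise as a subsequential limit of one of these boundary spheres.

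First, combining the bound \eqref{eq:sigma-pi-area-lb} with the $\mu$-averaged inequality \eqref{eq:average-geodesic-distance}, for each $j$ I select a point $x_j \in \Sigma_\pi$ with $\pi - (\theta_j^+(x_j) - \theta_j^-(x_j)) = o(1)$ as $j \to \infty$, where $\theta_j^\pm$ refers to the quantities defined for $\phi_j$. Writing $r_j^\pm := |\theta_j^\pm(x_j)|$ and
\[
  p_j^\pm := \phi_j(x_j) \cos r_j^\pm \pm \nu_j(x_j) \sin r_j^\pm,
\]
the open geodesic balls $B_j^\pm := B_{r_j^\pm}(p_j^\pm) \subset \S^3$ are disjoint from $\phi_j(\Sigma)$ by the very definition of $\theta_j^\pm$, and near $\phi_j(x_j)$ they lie on opposite sides of the surface along $\pm \nu_j$. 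Since $\phi_j$ is an embedding, each $B_j^\pm$ is connected and disjoint from $\phi_j(\Sigma)$, so by connectedness $B_j^+ \subseteq \Omega_j^+$ and $B_j^- \subseteq \Omega_j^-$. Moreover $r_j^+ + r_j^- \to \pi$.

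Next I argue that the closed complement $K_j := \S^3 \setminus (B_j^+ \cup B_j^-)$ Hausdorff-converges to $\partial B_j^+$. Indeed, $B_j^\pm$ are disjoint geodesic balls whose radii sum to $\pi - o(1)$, so an elementary spherical estimate gives $\sup_{y \in K_j} \dist_{\S^3}(y, \partial B_j^+) = o(1)$. Since $\phi_j(\Sigma) \subset K_j$, this yields one half of the Hausdorff bound, namely $\sup_{y \in \phi_j(\Sigma)} \dist(y, \partial B_j^+) = o(1)$. For the reverse half, fix $q \in \partial B_j^+$; the point on $\partial B_j^-$ closest to $q$ is at distance $o(1)$ from $q$, and any path joining $B_j^+$ to $B_j^-$ through a small neighborhood of $q$ must cross $\phi_j(\Sigma)$ because these balls lie in different components $\Omega_j^\pm$ of $\S^3 \setminus \phi_j(\Sigma)$. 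This produces a point of $\phi_j(\Sigma)$ within $o(1)$ of $q$, giving $\sup_{q \in \partial B_j^+} \dist(q, \phi_j(\Sigma)) = o(1)$. Hence $\dist_\Haus(\phi_j(\Sigma), \partial B_j^+) \to 0$.

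Finally, the space of (possibly degenerate) round $2$-spheres in $\S^3$ is sequentially compact in the Hausdorff topology, being parametrized by the compact set $\S^3 \times [0, \pi]$ of center-radius pairs. Extracting a subsequence along which $\partial B_j^+$ converges to some (possibly degenerate) round sphere $S$, the triangle inequality for $\dist_\Haus$ yields $\phi_j(\Sigma) \to S$. Uniqueness of $S$ along the extracted subsequence is automatic from the uniqueness of Hausdorff limits. The one subtle point in the argument is the identification $B_j^\pm \subseteq \Omega_j^\pm$, which is the only place where embeddedness is genuinely used: without it, the two normal geodesic segments out of $\phi_j(x_j)$ could a priori reach the \emph{same} side of the surface, and the sandwich argument would break down.
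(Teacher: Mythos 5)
Your proof is correct and follows essentially the same strategy as the paper's: select a point where $\theta_j^+-\theta_j^-$ is nearly $\pi$, sandwich $\phi_j(\Sigma)$ between the two disjoint geodesic balls $B_j^\pm$ (using embeddedness to place them in opposite components), show the boundary of the complement is Hausdorff-close to $\del B_j^+$ via a short-path-crossing argument, and conclude by compactness of the space of degenerate round spheres. The only cosmetic difference is that the paper takes $x_j$ to \emph{maximize} $\theta_j^+-\theta_j^-$ whereas you extract a good point by averaging from \eqref{eq:average-geodesic-distance} and \eqref{eq:sigma-pi-area-lb}; both are equally valid.
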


After a change of coordinates, we may assume that $S = \del B_R(p_S)$ where $p_S = -e_4 \in \S^3$ is the South pole and $0 \leq R \leq \pi / 2$. Moreover, up to reorienting $\Sigma$, we may assume that (for $j$ large enough) $\Omega^+_{j}$ contains the North pole $p_N = e_4$.
We also have:
\begin{lemma}
  \label{lem:most-sigmapi-goes-far}
  For every $\eta > 0$ there exists $j_0 \in \N$ such that, if $j \geq j_0$, then
  \[
    \mu \left(\left\{ x \in \Sigma^{(j)}_\pi : \theta^+(x) > \pi - R - \eta \text{ and } |\theta^-(x)| > R - \eta \right\}\right) \geq 4\pi - \eta.
  \]
  \begin{proof}
    From \eqref{eq:average-geodesic-distance} we have that
    \[
      \mu_j \left( \left\{x \in \Sigma_\pi^{(j)} : \pi - (\theta^+(x) - \theta^-(x)) > \frac{\eta}{2} \right\} \right)
      \leq \frac{2}{\eta} \int_{\Sigma_\pi^{(j)}} \pi - (\theta^+(x) - \theta^-(x)) \, \dd \mu_j(x)
      \leq \frac{\eta}{2}
    \]
    if $j$ is large enough. Together with \eqref{eq:sigma-pi-area-lb}, we will be done if we show that points in the complement of this set satisfy the required conclusion. Thus, let $x \in \Sigma$ such that $\pi - \tfrac{\eta}{2} < \theta^+(x) - \theta^-(x)$. Assuming for a contradiction that $\theta^+(x) \leq \pi - R - \eta$, then
    \[
      \pi - \frac{\eta}{2} < \theta^+(x) + |\theta^-(x)| \leq \pi - R - \eta + |\theta^-(x)|,
    \]
    so $|\theta^-(x)| > R + \frac{\eta}{2}$. By following the normal geodesic at $\phi_j(x)$ starting from $-\nu_j(x)$ for a time $|\theta^-(x)|$, we find a point $p$ in $\Omega^-_{j}$ whose distance to $\phi_j(\Sigma)$ is at least $R + \frac{\eta}{2}$. Then, using \eqref{eq:hausdorff-convergence}, we conclude that, provided that $j$ is large enough, the distance from $p$ to $\del B_R(p_S)$ is strictly bigger than $R$. As a result, $p$ cannot be in $B_R(p_S)$ and we must have $p \in \S^3 \setminus B_{2R}(p_S)$. Clearly for $j$ large enough this implies that $p$ belongs to $\Omega^+_{j}$, a contradiction.

    On the other hand, if $|\theta^-(x)| \leq R - \eta$, then
    \[
      \pi - \frac{\eta}{2} < \theta^+(x) + |\theta^-(x)| \leq R - \eta + \theta^+(x),
    \]
    so $\theta^+(x) > \pi - R + \frac{\eta}{2}$ and, arguing as before, for $j$ large we get a point $p \in \S^3$ at distance at least $\pi - R \geq R$ from $\del B_R(p_S)$, which is impossible.
  \end{proof}
\end{lemma}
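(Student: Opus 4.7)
The plan is to combine Chebyshev's inequality applied to the integral bound \eqref{eq:average-geodesic-distance} with the Hausdorff convergence from \cref{lem:hausdorff-convergence}. Write $\delta_j := \AG(\phi_j) - 4\pi(1+g)$, which tends to $0$ by hypothesis.

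First, for the measure estimate I would apply Markov's inequality to \eqref{eq:average-geodesic-distance} to control the exceptional set
\[
  A_j := \left\{ x \in \Sigma_\pi^{(j)} : \pi - (\theta^+(x) - \theta^-(x)) > \eta/2 \right\},
\]
obtaining $\mu_j(A_j) \le (2C/\eta)\,\delta_j^{1/3}$, which tends to $0$. Combined with the lower bound $\mu_j(\Sigma_\pi^{(j)}) \ge 4\pi - C\delta_j$ from \eqref{eq:sigma-pi-area-lb}, for $j$ large it is enough to show that every $x \in \Sigma_\pi^{(j)} \setminus A_j$ satisfies both $\theta^+(x) > \pi - R - \eta$ and $|\theta^-(x)| > R - \eta$.

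The geometric step, which is where I expect the main obstacle, requires simultaneously using that the endpoint of the normal geodesic lands on a prescribed side of $\phi_j(\Sigma)$ and that the Hausdorff convergence controls how far into the complement of $\phi_j(\Sigma)$ one can reach. Arguing by contradiction, suppose $\theta^+(x) \le \pi - R - \eta$; then $\theta^+(x) + |\theta^-(x)| > \pi - \eta/2$ forces $|\theta^-(x)| > R + \eta/2$. By the definition of $\theta^-$, the endpoint $p := \phi_j(x)\cos\theta^-(x) + \nu_j(x)\sin\theta^-(x) \in \Omega^-_j$ satisfies $\dist_{\S^3}(p, \phi_j(\Sigma)) = |\theta^-(x)| > R + \eta/2$, and hence $\dist(p, S) > R$ for $j$ large by Hausdorff convergence. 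Since every point of $\overline{B_R(p_S)}$ lies within distance $R$ of $S = \del B_R(p_S)$, necessarily $p \notin \overline{B_R(p_S)}$; and for such a point $\dist(p, S) = \dist(p, p_S) - R$, so $\dist(p, p_S) > 2R$, placing $p$ in the closed geodesic ball of radius $\pi - 2R$ around $p_N$ (a single point when $R = \pi/2$). For $j$ large, this entire ball is forced into $\Omega^+_j$ by the Hausdorff convergence together with the orientation convention $p_N \in \Omega^+_j$, contradicting $p \in \Omega^-_j$.

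The bound $|\theta^-(x)| > R - \eta$ follows by a mirror argument that is in fact simpler, requiring no side tracking: if $|\theta^-(x)| \le R - \eta$ then $\theta^+(x) > \pi - R + \eta/2$, so the corresponding endpoint $q \in \Omega^+_j$ would be at distance $> \pi - R + \eta/2$ from $\phi_j(\Sigma)$, hence at distance $> \pi - R$ from $S$ for $j$ large. This is impossible because the assumption $R \le \pi/2$ makes $\pi - R$ the maximum distance from any point of $\S^3$ to $S$, attained at $p_N$.
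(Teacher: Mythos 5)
Your proposal is correct and follows essentially the same route as the paper: Markov's inequality applied to \eqref{eq:average-geodesic-distance} to excise a small exceptional set, then for every remaining $x$ the same contradiction argument---follow the normal geodesic to a point far from $\phi_j(\Sigma)$ and hence far from $S$, and use the Hausdorff convergence to conclude that the point lands on the wrong side, or (in the second case) that no point of $\S^3$ can be that far from $S$. Your version supplies slightly more explicit justifications (the identity $\dist(p,S) = \dist(p,p_S) - R$, the observation that the entire ball $B_{\pi-2R}(p_N)$ is connected and disjoint from $\phi_j(\Sigma)$, the identification of $p_N$ as the unique farthest point from $S$) and correctly handles the degenerate boundary case $R=\pi/2$, but the decomposition and the key geometric mechanism are identical to the paper's.
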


\subsection{Decomposition of the normal cycle}

The next step in the proof is to decompose the integral cycles $(\Lambda_j)_\# \DBrack{\Sigma}$ into two pieces $N_j^+$ and $N_j^-$, both integral Legendrian cycles representing nontrivial classes in homology, which converge weakly to homology minimizers, and such that $\Mass(N_j^+) + \Mass(N_j^-) - \AG(\phi_j) \to 0$. We will construct the cycle $N_j^+$ as the normal cycle of a surface parallel to a convex set that captures most of the positive curvature of $\phi_j(\Sigma)$.

\begin{prop}
  \label{prop:convex-sets}
  There exist sequences of positive numbers $\delta_j \to 0$ and $\rho_j \to \pi - R$, and a sequence of geodesically convex sets $K_j \subset B_{\delta_j}(p_N) \subset \S^3$, such that denoting
  \[
    \Sigma_j^+ := \{ x \in \Sigma^{(j)}_\pi : \theta^+(x) > \rho_j \text{ and } \theta^-(x) > R / 2 \} \subset \Sigma
  \]
  and introducing the parallel map $P_j : \Sigma_j^+ \to \S^3$,
\[
  P_j(x) := \phi_j(x) \cos \rho_j + \nu_j(x) \sin \rho_j,
\]
  we have that $\mu_j(\Sigma_j^+) \geq 4\pi - \delta_j$ and $P_j$ is an embedding of $\Sigma_j^+$ into an open subset of $\del K_j$.

  \begin{proof}
    By \cref{lem:most-sigmapi-goes-far}, we can find a sequence $\eps_j \to 0$ such that, with $\rho_j := \pi - R - \eps_j$ and the subsets $\Sigma_j^+$ defined above, we have $\mu(\Sigma_j^+) \geq 4\pi - \eps_j$.

    We distinguish two cases: first, if $R < \pi / 2$, define
    \begin{equation}
      K_j := \{ p \in \Omega^+_{j} : \dist(p, \phi_j(\Sigma)) \geq \rho_j \} \subset \S^3.
    \end{equation}
    Observe that, for large $j$, the sets $K_j$ are geodesically convex:
    \[
      K_j
      = \bigcap_{x \in \Sigma} \{ p \in \Omega^+_{j} : \dist(p, \phi_j(x)) \geq \rho_j \}
      = \bigcap_{x \in \Sigma} \{ p \in \Omega^+_{j} : \dist(p, -\phi_j(x)) \leq \pi - \rho_j \},
    \]
    which is the intersection of geodesically convex sets because eventually $\pi - \rho_j < \pi / 2$. Moreover, for each $p \in K_j$,
    \[
      \dist(p, S)
      \geq \dist(p, \phi_j(\Sigma)) - \dist_\Haus(\phi_j(\Sigma), S)
      \geq \rho_j - o_j(1)
      = \pi - R - o_j(1),
    \]
    therefore $K_j \subseteq B_{\delta_j}(p_N)$ and also $\mu(\Sigma_j^+) \geq 4\pi - \delta_j$ for a sequence $\delta_j \to 0$.

  In the case $R = \pi / 2$ we have to work a bit harder: let $Z_j := \{ q \in \Omega_{j}^- : \dist(q, \phi_j(\Sigma)) = 2 \eps_j \}$ and consider instead
  \[
    K_j := \{ p \in \Omega_{j}^+ : \dist(p, Z_j) \geq \rho_j + 2 \eps_j \}.
  \]
  Since now $\rho_j + 2 \eps_j > \pi / 2$ for each $j$, these sets are again convex, and since still $Z_j \to S$ in the Hausdorff distance, the same argument as above shows that $K_j$ is contained in infinitesimally small balls.

  We claim that for every $p \in \Omega_{j}^+$, $\dist(p, Z_j) \geq \dist(p, \phi_j(\Sigma)) + 2 \eps_j$: if $\gamma$ is a geodesic joining $p$ and $Z_j$ at $q$, then it must intersect $\phi_j(\Sigma)$ at some point $a$; it follows that
  \begin{align*}
    \dist(p, Z_j)
    = \operatorname{len}(\gamma)
    \geq \dist(p, a) + \dist(a, q)
    &\geq \dist(p, \phi_j(\Sigma)) + \dist(q, \phi_j(\Sigma)) \\
    &= \dist(p, \phi_j(\Sigma)) + 2 \eps_j.
  \end{align*}
  However, for points $p \in \Omega_{j}^+$ along the geodesic starting at $\phi_j(x)$ with initial tangent vector $\nu_j(x)$, provided that $x \in \Sigma_j^+$ and $\dist(p, \phi_j(\Sigma)) \leq \theta^+(x)$, the opposite inequality is also true: namely, clearly $q := \phi_j(x) \cos (2\eps_j) - \nu_j(x) \sin (2\eps_j) \in Z_j$, and then
  \[
    \dist(p, Z_j)
    \leq \dist(p, q)
    \leq \dist(p, \phi_j(x)) + \dist(\phi_j(x), q)
    = \dist(p, \phi(\Sigma)) + 2\eps_j.
  \]
  In either case, and for $x \in \Sigma_j^+$, by moving slightly up and down in the geodesic, we find points arbitrarily close to each $P_j(x)$ both in $K_j$ and in its complement. This shows that $P_j$ maps $\Sigma_j^+$ into $\del K_j$.

Now we prove the last claims.
    Since $0 < \rho_j < \theta^+(x) < \theta_2(x)$, which is the first focal point in the direction of $\nu_j(x)$, $P_j$ is an immersion on $\Sigma_j^+$. Moreover, $\phi_j(x)$ is the closest point in $\phi_j(\Sigma)$ to $P_j(x)$, which proves that $P_j$ is injective.
    Finally, recall that boundaries of convex sets are Lipschitz surfaces, in particular locally homeomorphic to $\R^2$, so by Brouwer's theorem of invariance of domain, $P_j$ is a local homeomorphism and therefore an embedding.
  \end{proof}
\end{prop}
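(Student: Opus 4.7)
The plan is to choose $\rho_j = \pi - R - \eps_j$ with $\eps_j \searrow 0$ coming from \cref{lem:most-sigmapi-goes-far}, which automatically gives the measure bound $\mu_j(\Sigma_j^+) \ge 4\pi - \eps_j$. With this choice in hand, the only real work is to construct $K_j$ geodesically convex, nearly concentrated at $p_N$, and such that $P_j(\Sigma_j^+) \subset \del K_j$.

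The natural definition is $K_j := \{p \in \Omega^+_j : \dist(p,\phi_j(\Sigma)) \ge \rho_j\}$. Rewriting this as
\[
K_j = \bigcap_{x \in \Sigma} \bigl\{p \in \Omega^+_j : \dist(p,-\phi_j(x)) \le \pi - \rho_j\bigr\},
\]
it is an intersection of geodesic balls of radius $\pi - \rho_j$. When $R < \pi/2$ one has $\pi - \rho_j < \pi/2$ for large $j$, so each ball is geodesically convex and hence so is $K_j$. In the borderline case $R = \pi/2$ the radii are essentially $\pi/2$ and this argument breaks; the workaround is to offset: set $Z_j := \{q \in \Omega^-_j : \dist(q,\phi_j(\Sigma)) = 2\eps_j\}$ and take $K_j := \{p \in \Omega^+_j : \dist(p,Z_j) \ge \rho_j + 2\eps_j\}$. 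Now the ball radii $\pi - \rho_j - 2\eps_j$ are strictly below $\pi/2$, so convexity is restored, while a simple geodesic-crossing estimate (any minimizing geodesic from $p \in \Omega_j^+$ to $Z_j$ must puncture $\phi_j(\Sigma)$) shows $\dist(p,Z_j) = \dist(p,\phi_j(\Sigma)) + 2\eps_j$ along normal geodesics issuing from points of $\Sigma_j^+$, which is exactly what preserves $P_j(\Sigma_j^+) \subset \del K_j$.

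For the remaining conclusions, $K_j \subset B_{\delta_j}(p_N)$ follows from the lower bound $\dist(p,S) \ge \rho_j - o_j(1) = \pi - R - o_j(1)$ (using Hausdorff convergence $\phi_j(\Sigma) \to S$ from \cref{lem:hausdorff-convergence}) combined with the fact that a point of $\S^3$ at distance essentially $\pi - R$ from $S = \del B_R(p_S)$ lies in a shrinking neighborhood of the antipode $p_N$. The map $P_j$ is an immersion on $\Sigma_j^+$ because $\rho_j < \theta^+(x) \le \theta_2(x)$ precludes focal points; it is injective because $\phi_j(x)$ is the unique closest point of $\phi_j(\Sigma)$ to $P_j(x)$ by the definition of $\theta^+(x)$; and $P_j(x) \in \del K_j$ because moving infinitesimally along the normal geodesic on either side produces points in $K_j$ and in its complement. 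Invariance of domain then upgrades $P_j$ to a topological embedding into an open subset of $\del K_j$.

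The main obstacle is recognizing and handling the degenerate case $R = \pi/2$: the naive offset of $\phi_j(\Sigma)$ by a distance $\rho_j \approx \pi/2$ does not yield a convex set, and the thickening trick with $Z_j$ is the only point where a nontrivial geometric adjustment is needed. Everything else---measure bound, smallness of $K_j$, and the immersion/injectivity properties of $P_j$---follows routinely from the quantitative estimates of the previous subsection and the convergence of $\phi_j(\Sigma)$ to $S$.
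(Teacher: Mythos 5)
Your proposal is correct and follows essentially the same approach as the paper: the same choice $\rho_j = \pi - R - \eps_j$ from \cref{lem:most-sigmapi-goes-far}, the same definition of $K_j$ and its rewriting as an intersection of geodesic balls to prove convexity, the same $Z_j$-offset trick for the borderline case $R = \pi/2$, and the same closing arguments (focal-point bound for immersivity, closest-point uniqueness for injectivity, invariance of domain for the embedding). You have also correctly identified the $R = \pi/2$ case as the only step requiring a nontrivial geometric adjustment.
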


The theory of curvature measures (see, for example, \cite{morvan-generalized-curvatures} or \cite{rataj-zahle-curvature-measures}) now provides us with a normal cycle for $K_j$, that is, a Legendrian integral $2$-current with zero boundary which lies in the unit tangent bundle of $\S^3$ (which is isomorphic to the Stiefel manifold $V_2(\R^4)$) that generalizes the current carried by the Legendrian Gauss map to sets that are not smooth.

In order to define it and to do computations, it is convenient to regularize $K_j$ by taking a parallel manifold, and then go back by a ``parallel transformation'' in $V_2(\R^4)$. More precisely, for $r \in \R$ define the following parallel map $\mathcal{P}_r$:
\[
  \mathcal{P}_r : (a, b) \in V_2(\R^4) \longmapsto (a \cos r + b \sin r, -a \sin r + b \cos r) \in V_2(\R^4).
\]
This is an isometry, preserves the contact structure, and satisfies $\pi \circ \mathcal{P}_r = \pi$. For a geodesically convex set $K \subset \S^3$ and $r > 0$ denote by $K_r$ the parallel set
\[
  K_r := \{ p \in \S^3 : \dist(p, K) \leq r \}.
\]
It is well known these sets are $C^{1,1}$ for small $r$ (see \cite{federer-curvature-measures}), so that the Legendrian Gauss maps $\Lambda_{K_r} : \del K_r \to V_2(\R^4)$ are well-defined Lipschitz maps, oriented so that the normal vectors $\nu_{K_r}$ point inside $K_r$. Then the currents $(\mathcal{P}_{r})_\# (\Lambda_{K_r})_\# \DBrack{\del K_r}$ are independent of $r$ for $r > 0$ small, and define the normal cycle $N_K$ of $K$. For concreteness, it will be convenient to fix some $0 < \delta_j' < \delta_j$ and define $\tilde K_j := (K_j)_{\delta_j'} \simeq \S^2$ so that
\[
  N_{K_j} = (\mathcal{P}_{\delta_j'})_\# (\Lambda_{\tilde K_j})_\# \DBrack{\del \tilde K_j}
\]
and the map $\tilde P_j : \Sigma_j^+ \to \del \tilde K_j$,
\[
  \tilde P_j(x) := \phi_j(x) \cos (\tilde \rho_j) + \nu_j(x) \sin (\tilde \rho_j),
  \qquad \tilde \rho_j := \rho_j - \delta_j'
\]
is still an embedding for the same reason as $P_j$.
Finally, in order to define the ``positive part'' $N_j^+$ of $(\Lambda_j)_\# \DBrack{\Sigma}$, we simply push forward $N_{K_j}$ by $\mathcal{P}_{-\rho_j}$, and define $N_j^-$ so that $(\Lambda_j)_\# \DBrack{\Sigma} = N_j^+ - N_j^-$ holds:
\begin{equation}
  \label{eq:decomp-N}
  N_j^+ := (\mathcal{P}_{-\rho_j})_\# N_{K_j}
  \qquad \text{and} \qquad
  N_j^- := N_j^+ - (\Lambda_j)_\# \DBrack{\Sigma}.
\end{equation}
Alternatively, $N_j^+ = (\mathcal{P}_{-\tilde \rho_j})_\# (\Lambda_{\tilde K_j})_\# \DBrack{\del \tilde K_j}$.

\begin{lemma}
  For every $x \in \Sigma_j^+$ it holds that $\mathcal{P}_{-\tilde \rho_j}(\Lambda_{\tilde K_j}(\tilde P_j(x))) = \Lambda_j(x)$, and also
\begin{equation}
  \label{eq:pushforward-gauss-measure}
  (\tilde P_j)_\# (\mu_{j} \res \Sigma_j^+) = \mu_{\tilde K_j} \res \tilde P_j(\Sigma_j^+)
  \qquad \text{as measures on } \del \tilde K_j.
\end{equation}

\begin{proof}
  The first claim can be rewritten as $\Lambda_{\tilde K_j}(\tilde P_j(x)) = \mathcal{P}_{\tilde \rho_j}(\Lambda_j(x))$, and this is clear by examining the expression and using the fact that $\tilde P_j$ is an embedding into $\tilde K_j$. The second claim follows from the first by using the area formula and the injectivity of $\tilde P_j$: for any Borel $A \subset \tilde K_j$,
  \begin{align*}
    \pushQED{\qed}
    \left( \mu_{\tilde K_j} \res \tilde P_j(\Sigma_j^+) \right) (A)
    &= \int_{A \cap \tilde P_j(\Sigma_j^+)} \Jac(\Lambda_{\tilde K_j})(y) \, \dd \Haus^2(y) \\
    &= \int_{\tilde P_j^{-1}(A) \cap \Sigma_j^+} \Jac(\Lambda_{\tilde K_j})(\tilde P_j(x)) \cdot \Jac(\tilde P_j)(x) \, \dd \Haus^2(x) \\
    &= \int_{\tilde P_j^{-1}(A) \cap \Sigma_j^+} \Jac(\Lambda_{j})(x) \, \dd \Haus^2(x) \\
    &= (\mu_{j} \res \Sigma_j^+)(\tilde P_j^{-1}(A))
    = (\tilde P_j)_\# (\mu_{j} \res \Sigma_j^+)(A).
    \qedhere
  \end{align*}
\end{proof}
\end{lemma}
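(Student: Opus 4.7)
The plan is to verify both claims by direct computation, using the geometric meaning of the parallel flow $\mathcal{P}_r$ and the definition of $\tilde K_j$ as a parallel regularization of $K_j$.

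For the first claim, fix $x \in \Sigma_j^+$ and consider the geodesic $\gamma(t) = \phi_j(x)\cos t + \nu_j(x)\sin t$, whose tangent vector is $\gamma'(t) = -\phi_j(x)\sin t + \nu_j(x)\cos t$. By construction of $\tilde K_j = (K_j)_{\delta_j'}$ and the choice of $\tilde \rho_j = \rho_j - \delta_j'$, the geodesic $\gamma$ crosses $\del \tilde K_j$ at time $\tilde \rho_j$ (at the point $\tilde P_j(x)$) and reaches $\del K_j$ at time $\rho_j$, so $\gamma'(\tilde \rho_j)$ is the unit tangent pointing \emph{into} $\tilde K_j$. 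Since $\tilde P_j$ embeds $\Sigma_j^+$ into $\del \tilde K_j$ and $\tilde K_j$ is $C^{1,1}$, the inward unit normal to $\del \tilde K_j$ at $\tilde P_j(x)$ must be $\gamma'(\tilde \rho_j)$, so
\[
  \Lambda_{\tilde K_j}(\tilde P_j(x)) = \bigl(\tilde P_j(x),\, \gamma'(\tilde \rho_j)\bigr) = \mathcal{P}_{\tilde \rho_j}(\phi_j(x), \nu_j(x)) = \mathcal{P}_{\tilde \rho_j}(\Lambda_j(x)).
\]
Applying $\mathcal{P}_{-\tilde \rho_j}$ to both sides and using that $\mathcal{P}_{-\tilde \rho_j} \circ \mathcal{P}_{\tilde \rho_j} = \id$ gives the identity. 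Conceptually, this is just the flow property: $\mathcal{P}_r$ is the time-$r$ geodesic flow on $V_2(\R^4)$ with respect to the Legendrian direction, so it sends the Legendrian lift of a smooth surface to the Legendrian lift of its $r$-parallel.

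For the second claim I would use the area formula together with the injectivity of $\tilde P_j$ (established in \cref{prop:convex-sets}). Since $\mathcal{P}_{\tilde \rho_j}$ is an isometry of $V_2(\R^4)$, the first claim implies
\[
  \Jac(\Lambda_{\tilde K_j} \circ \tilde P_j)(x) = \Jac(\mathcal{P}_{\tilde \rho_j} \circ \Lambda_j)(x) = \Jac(\Lambda_j)(x)
\]
for a.e.\ $x \in \Sigma_j^+$, and by the chain rule $\Jac(\Lambda_{\tilde K_j})(\tilde P_j(x))\,\Jac(\tilde P_j)(x) = \Jac(\Lambda_j)(x)$. Hence for any Borel set $A \subset \del \tilde K_j$, applying the area formula to $\tilde P_j$ on the Borel set $\tilde P_j^{-1}(A) \cap \Sigma_j^+$ yields
\[
  \mu_{\tilde K_j}(A \cap \tilde P_j(\Sigma_j^+)) = \int_{\tilde P_j^{-1}(A)\cap\Sigma_j^+} \Jac(\Lambda_j)\,\dd \Haus^2 = \mu_j\bigl(\tilde P_j^{-1}(A)\cap \Sigma_j^+\bigr),
\]
which is exactly the pushforward identity.

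I do not expect any real obstacle: everything is a consequence of the explicit expression for $\tilde P_j$ and $\mathcal{P}_r$ and of the fact that $\tilde K_j$ is $C^{1,1}$, so $\Lambda_{\tilde K_j}$ is a well-defined Lipschitz map to which the area formula applies. The only small subtlety worth recording is the orientation: the normal $\nu_{\tilde K_j}$ is chosen to point \emph{into} $\tilde K_j$, and this matches the sign of $\gamma'(\tilde \rho_j)$ precisely because the geodesic $\gamma$ enters $\tilde K_j$ at time $\tilde \rho_j$.
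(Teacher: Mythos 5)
Your proof is correct and takes essentially the same approach as the paper: the paper leaves the first claim as "clear by examining the expression" (which you spell out via the geodesic $\gamma$ and the orientation convention for $\nu_{\tilde K_j}$), and for the second claim both you and the paper apply the area formula to the injective map $\tilde P_j$ together with the Jacobian chain rule and the fact that $\mathcal{P}_{\tilde\rho_j}$ is an isometry of $V_2(\R^4)$. No gaps.
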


We will need the following lemma to estimate the mass of $N_j^+$:

\begin{lemma}
  \label{lem:error-terms-convex}
  For every $\eps > 0$ there exists $\delta > 0$ with the following property: if $V \subset \S^3$ is a closed geodesically convex set contained in $B_\delta(p_N)$ and with $C^{1,1}$ boundary, then
  \begin{equation}
    \Haus^2(\del V) \leq \eps
  \end{equation}
  and
  \begin{equation}
    \int_{\del V} |A| \, \dd \Haus^2 \leq \eps,
  \end{equation}
  where $A$ denotes the second fundamental form of $\del V$.
  \begin{proof}
    Let $\lambda_1, \lambda_2 : \del V \to \R$ denote the principal curvatures of $\del V$, computed with respect to the \emph{inner} normal (so that, following our convention, they are nonnegative). For each $0 < t < \pi / 2 - \delta$ we have
    \begin{align*}
      \vol(V_t)
      &= \vol(V) + \int_{0}^t \int_{\del V} (\cos \theta + \lambda_1 \sin \theta)(\cos \theta + \lambda_2 \sin \theta) \, \dd \Haus^2 \, \dd \theta \\
      &= \vol(V) + \int_{\del V} \int_{0}^t \cos^2 \theta - (\lambda_1 + \lambda_2) \cos \theta \sin \theta + \lambda_1 \lambda_2 \sin^2 \theta \, \dd \theta \, \dd \Haus^2 \\
      &= \vol(V) + \int_{\del V} \left( \frac{t}{2} + \frac{\sin 2t}{4} \right) + (\lambda_1 + \lambda_2) \frac{1 - \cos 2t}{2} + \lambda_1 \lambda_2 \left( \frac{t}{2} - \frac{\sin 2t}{4} \right) \, \dd \Haus^2 \\
      &= \vol(V) + \int_{\del V} \frac{\sin 2t}{2} + (\lambda_1 + \lambda_2) \frac{1 - \cos 2t}{2} + (1 + \lambda_1 \lambda_2) \left( \frac{t}{2} - \frac{\sin 2t}{4} \right) \, \dd \Haus^2 \\
      &= \vol(V) + \int_{\del V} \frac{\sin 2t}{2} + (\lambda_1 + \lambda_2) \frac{1 - \cos 2t}{2} \, \dd \Haus^2 + 4\pi \left( \frac{t}{2} - \frac{\sin 2t}{4} \right) 
    \end{align*}
    On one hand, choose $t = \delta$ and keep only the second term:
    \begin{align*}
      \vol(V_\delta)
      &\geq \frac{\sin 2\delta}{2} \Haus^2(\del V)
      \geq c \delta \Haus^2(\del V).
    \end{align*}
    On the other hand, we have that $V_\delta \subset B_{2\delta}(p_N)$ and therefore
    \[
      \vol(V_\delta) \leq \vol(B_{2\delta}(p_N)) \leq C \delta^3,
    \]
    which gives that $\Haus^2(\del V) \leq C \delta^2$. For the bound on the second fundamental form, we have that $|A| \lesssim |\lambda_1| + |\lambda_2| = \lambda_1 + \lambda_2$, so it is enough to estimate the integral of the mean curvature.
    As before, we have
    \begin{align*}
      \vol(V_t)
      &= \vol(V) + \int_{\del V} \frac{\sin 2t}{2} + (\lambda_1 + \lambda_2) \frac{1 - \cos 2t}{2} \, \dd \Haus^2 + 4\pi \left( \frac{t}{2} - \frac{\sin 2t}{4} \right) \\
      &\geq \frac{1 - \cos 2t}{2} \int_{\del V} (\lambda_1 + \lambda_2) \, \dd \Haus^2 + 2\pi \left( t - \frac{\sin 2t}{2} \right) 
    \end{align*}
    and also $V_t \subseteq B_{t+\delta}(p_N)$, so that
    \[
      \vol(V_t)
      \leq \vol(B_{t+\delta}(p_N))
      = 2\pi \left( (t+\delta) - \frac{\sin(2 (t+\delta))}{2} \right).
    \]
    Now choose, for example, $t = \pi / 4$ and get
    \[
      2\pi \left( \delta + \frac{1 - \cos(2\delta)}{2} \right)
      \geq \frac{1}{2} \int_{\del V} (\lambda_1 + \lambda_2) \, \dd \Haus^2,
    \]
    which gives the desired upper bound by taking $\delta$ small enough.
  \end{proof}
\end{lemma}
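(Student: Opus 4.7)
The plan is to apply the Steiner-type tube formula for convex sets in $\S^3$ in combination with Gauss--Bonnet on $\del V$, which is topologically a $2$-sphere. Since $V$ is geodesically convex with $C^{1,1}$ boundary and lives inside a small ball, the inner principal curvatures $\lambda_1, \lambda_2$ of $\del V$ are nonnegative a.e., and the parallel sets $V_t = \{p : \dist(p,V) \leq t\}$ remain smooth for $t$ in a fixed positive range (for $\delta$ small). The Jacobian of the normal exponential map is $(\cos\theta + \lambda_1 \sin\theta)(\cos\theta + \lambda_2 \sin\theta)$, giving
\[
  \vol(V_t) - \vol(V) = \int_0^t \int_{\del V}(\cos\theta + \lambda_1 \sin\theta)(\cos\theta + \lambda_2 \sin\theta) \, \dd\Haus^2 \, \dd\theta.
\]

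Expanding and performing the $\theta$-integration, the right-hand side becomes
\[
  A(t)\, \Haus^2(\del V) + B(t) \int_{\del V}(\lambda_1 + \lambda_2) \, \dd\Haus^2 + C(t) \int_{\del V}(1 + \lambda_1\lambda_2) \, \dd\Haus^2,
\]
where $A, B, C$ are explicit nonnegative trigonometric functions vanishing at $t=0$, with $A(t) \geq c t$ for small $t$ and $B(t), C(t) > 0$ for every $t > 0$. The crucial reduction is that by the Gauss equation $K_{\del V} = 1 + \lambda_1\lambda_2$ in $\S^3$, combined with Gauss--Bonnet and the topological fact that a convex body in $\S^3$ has spherical boundary, the last integral equals the universal constant $4\pi$. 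Moreover an elementary computation shows $4\pi\, C(t) = \vol(B_t(p_N))$, i.e., that term exactly matches the volume of a round geodesic ball of radius $t$.

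To extract the area bound I would set $t = \delta$ and discard every positive contribution on the right except $A(\delta)\Haus^2(\del V)$, while bounding the left via $V_\delta \subseteq B_{2\delta}(p_N)$; since $\vol(B_{2\delta}(p_N)) = O(\delta^3)$ and $A(\delta) \geq c\delta$, one gets $\Haus^2(\del V) = O(\delta^2)$. For the mean-curvature bound I would fix a moderate value $t = \pi/4$, drop the nonnegative $A(t)\Haus^2(\del V)$ term, and use $\vol(V_t) \leq \vol(B_{t+\delta}(p_N))$. Subtracting the common Gauss--Bonnet contribution $4\pi C(t) = \vol(B_t(p_N))$ from both sides yields
\[
  B(t) \int_{\del V}(\lambda_1 + \lambda_2) \, \dd\Haus^2 \leq \vol(B_{t+\delta}(p_N)) - \vol(B_t(p_N)),
\]
whose right-hand side tends to zero as $\delta \to 0$. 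Since $|A|^2 = \lambda_1^2 + \lambda_2^2 \leq (\lambda_1 + \lambda_2)^2$ thanks to $\lambda_i \geq 0$, the pointwise bound $|A| \leq \lambda_1 + \lambda_2$ converts this into the required $L^1$ estimate.

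The main subtlety, and the step that is not purely computational, is this Gauss--Bonnet reduction: without recognizing that $\int_{\del V}(1+\lambda_1\lambda_2) \dd\Haus^2$ collapses to the universal constant $4\pi$ and that its coefficient $4\pi C(t)$ is exactly the background ball volume, the mean-curvature estimate would degenerate into a fixed positive constant rather than into a quantity that can be made arbitrarily small with $\delta$. The remaining work is a straightforward Steiner-formula bookkeeping together with verifying smoothness of the parallel surfaces for small $\delta$.
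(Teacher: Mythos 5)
Your proposal is correct and follows essentially the same route as the paper: the Steiner tube formula for $\vol(V_t)$, the Gauss--Bonnet reduction of $\int_{\del V}(1+\lambda_1\lambda_2)$ to $4\pi$, bounding $\vol(V_t)$ by the volume of a ball of radius $t+\delta$, and the two choices $t=\delta$ (for the area bound) and $t=\pi/4$ (for the mean-curvature bound). Your observation that $4\pi C(t)=\vol(B_t(p_N))$, so that the mean-curvature estimate comes from the difference $\vol(B_{t+\delta})-\vol(B_t)=O(\delta)$, is exactly what the paper's final inequality encodes, just stated a bit more transparently.
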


As a consequence we have:

\begin{lemma}
  \label{lem:area-gauss-convex-4pi}
  The areas of the Gauss maps of $\tilde K_j$ converge to $4\pi$.
  \begin{proof}
    If $\lambda_1, \lambda_2$ denote the principal curvatures of the $C^{1,1}$ submanifolds $\tilde K_j$, then
    \begin{align*}
      4\pi \leq \AG(\tilde K_j)
      &= \int_{\del \tilde K_j} \sqrt{1 + \lambda_1^2 + \lambda_2^2 + (\lambda_1 \lambda_2)^2} \, \dd \Haus^2
      \leq \int_{\del \tilde K_j} 1 + \lambda_1 + \lambda_2 + \lambda_1 \lambda_2 \, \dd \Haus^2 \\
      &= \int_{\del \tilde K_j} 1 + \lambda_1 \lambda_2 \, \dd \Haus^2 + o_j(1)
      = 4\pi + o_j(1)
    \end{align*}
    thanks to \cref{lem:error-terms-convex} and Gauss--Bonnet.
  \end{proof}
\end{lemma}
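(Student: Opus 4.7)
The lower bound $\AG(\tilde K_j)\ge 4\pi$ is free from \cref{thm:area-gauss-bound}: each $\del\tilde K_j$ is a topological $2$-sphere, being the $C^{1,1}$ boundary of a convex body. The work is to produce a matching upper bound $4\pi+o_j(1)$.

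My plan is to split the integrand of $\AG(\tilde K_j)=\int_{\del\tilde K_j}\sqrt{(1+\lambda_1^2)(1+\lambda_2^2)}\,\dd\Haus^2$ into one piece that is pinned down exactly by Gauss--Bonnet and error terms that vanish as $j\to\infty$ by \cref{lem:error-terms-convex}. Because $\tilde K_j$ is convex with $C^{1,1}$ boundary, its principal curvatures with respect to the inward normal satisfy $\lambda_1,\lambda_2\ge 0$ a.e., so the elementary pointwise estimate
\[
\sqrt{(1+\lambda_1^2)(1+\lambda_2^2)} \le (1+\lambda_1)(1+\lambda_2) = 1+(\lambda_1+\lambda_2)+\lambda_1\lambda_2
\]
holds. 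The first two summands will be absorbed into $o_j(1)$: their integrals are bounded by $\Haus^2(\del\tilde K_j)+\int_{\del\tilde K_j}|A|\,\dd\Haus^2$, both $o_j(1)$ by \cref{lem:error-terms-convex} since $\tilde K_j\subset B_{\delta_j}(p_N)$ and $\delta_j\to 0$.

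For the remaining term I will invoke the Gauss equation in $\S^3$: the intrinsic Gauss curvature of $\del\tilde K_j$ is $K_{\del\tilde K_j}=1+\lambda_1\lambda_2$, so by Gauss--Bonnet applied to the topological sphere $\del\tilde K_j$,
\[
\int_{\del\tilde K_j}(1+\lambda_1\lambda_2)\,\dd\Haus^2 = \int_{\del\tilde K_j}K_{\del\tilde K_j}\,\dd\Haus^2 = 2\pi\,\chi(\S^2) = 4\pi.
\]
Adding the three contributions yields $\AG(\tilde K_j)\le 4\pi+o_j(1)$, closing the squeeze against the lower bound. The only subtlety I anticipate is that the second fundamental form of $\del\tilde K_j$ exists only almost everywhere; however, $C^{1,1}$ regularity is precisely enough for both the pointwise Gauss equation and the integrated Gauss--Bonnet theorem to hold in the standard Lebesgue-integrated sense, so I expect this to be a routine technical verification rather than the genuine obstacle.
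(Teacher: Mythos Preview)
Your proposal is correct and follows essentially the same approach as the paper: the pointwise bound $\sqrt{(1+\lambda_1^2)(1+\lambda_2^2)}\le 1+\lambda_1+\lambda_2+\lambda_1\lambda_2$ (using $\lambda_i\ge 0$ from convexity), then \cref{lem:error-terms-convex} for the $\lambda_1+\lambda_2$ term and Gauss--Bonnet for $\int(1+\lambda_1\lambda_2)=4\pi$. The only cosmetic difference is that you treat $\int 1=\Haus^2(\del\tilde K_j)=o_j(1)$ as a separate error term before invoking Gauss--Bonnet, whereas the paper groups the ``$1$'' directly with $\lambda_1\lambda_2$; your bookkeeping is slightly redundant but entirely correct.
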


\begin{lemma}
  The cycles $N_j^\pm$ from \eqref{eq:decomp-N} satisfy
  \begin{equation}
    \label{eq:decomp-mass}
    \limsup_{j \to \infty} \Mass(N_j^+) \leq 4\pi
    \qquad \text{and} \qquad
    \limsup_{j \to \infty} \Mass(N_j^-) \leq 4 \pi g.
  \end{equation}
  Moreover we have the following convergence in total variation:
  \begin{equation}
    \label{eq:convergence-total-variation}
    \left| \| N_j^+\| + \| N_j^-\| - \| (\Lambda_j)_\# \DBrack{\Sigma} \| \right|(V_2(\R^4)) \xrightarrow{j \to \infty} 0.
  \end{equation}
  \begin{proof}
    Since $\mathcal{P}_{-\tilde \rho_j}(\Lambda_{\tilde K_j}(\tilde P_j(x))) = \Lambda_j(x)$ whenever $x \in \Sigma_j^+$, we have the following identities:
    \begin{equation}
      \label{eq:Nj-plus}
      \begin{split}
      N_j^+
      &= (\mathcal{P}_{-\tilde \rho_j} \circ \Lambda_{\tilde K_j})_\# \DBrack{\tilde P_j(\Sigma_j^+)} + (\mathcal{P}_{-\tilde \rho_j} \circ \Lambda_{\tilde K_j})_\# \DBrack{\del \tilde K_j \setminus \tilde P_j(\Sigma_j^+)} \\
      &= (\Lambda_j)_\# \DBrack{\Sigma_j^+} + (\mathcal{P}_{-\tilde \rho_j} \circ \Lambda_{\tilde K_j})_\# \DBrack{\del \tilde K_j \setminus \tilde P_j(\Sigma_j^+)}
      \end{split}
    \end{equation}
    and
    \begin{equation}
      \label{eq:Nj-minus}
      N_j^-
      = -(\Lambda_{j})_\# \DBrack{\Sigma} + N_j^+
      = -(\Lambda_{j})_\# \DBrack{\Sigma \setminus \Sigma_j^+} + (\mathcal{P}_{-\tilde \rho_j} \circ \Lambda_{\tilde K_j})_\# \DBrack{\del \tilde K_j \setminus \tilde P_j(\Sigma_j^+)}.
    \end{equation}
    On one hand, it is clear that
    \[
      \Mass(N_j^+)
      = \Mass\left( (\Lambda_{\tilde K_j})_\# \DBrack{\del \tilde K_j} \right)
      \leq \AG(\tilde K_j)
      = 4\pi + o_j(1)
    \]
    by \cref{lem:area-gauss-convex-4pi}.
    On the other hand, using \eqref{eq:pushforward-gauss-measure} and \cref{prop:convex-sets},
    \begin{equation}
      \label{eq:estimate-extra-mass}
      \begin{split}
        \Mass\left((\Lambda_{\tilde K_j})_\# \DBrack{\del \tilde K_j \setminus \tilde P_j(\Sigma_j^+)} \right)
  &\leq \mu_{\tilde K_j}\left(\del \tilde K_j \setminus \tilde P_j(\Sigma_j^+) \right)
  = \mu_{\tilde K_j}(\del \tilde K_j) - \mu_{\tilde K_j}(\tilde P_j(\Sigma_j^+) ) \\
  &= \AG(\tilde K_j) - \mu_j(\Sigma_j^+) 
  \leq (4\pi + o_j(1)) - (4\pi - o_j(1)) \xrightarrow{j \to \infty} 0.
      \end{split}
    \end{equation}
    This gives
    \begin{align*}
      \Mass(N_j^-)
  &\leq \Mass\left((\Lambda_{j})_\# \DBrack{\Sigma \setminus \Sigma_j^+}\right) + \Mass\left((\mathcal{P}_{-\tilde \rho_j})_\# (\Lambda_{\tilde K_j})_\# \DBrack{\del \tilde K_j \setminus \tilde P_j(\Sigma_j^+)}\right) \\
  &\leq \mu_j(\Sigma \setminus \Sigma_j^+) + \Mass\left((\Lambda_{\tilde K_j})_\# \DBrack{\del \tilde K_j \setminus \tilde P_j(\Sigma_j^+)}\right) \\
  &\leq \mu_j(\Sigma) - \mu_j(\Sigma_j^+) + o_j(1)
  \leq 4\pi(1+g) - 4\pi + o_j(1)
  = 4\pi g + o_j(1).
    \end{align*}
    For the last claim, note that since $\| (\Lambda_j)_\# \DBrack{\Sigma} \| = \| (\Lambda_j)_\# \DBrack{\Sigma_j^+} \| + \| (\Lambda_j)_\# \DBrack{\Sigma \setminus \Sigma_j^+} \|$,
    \begin{align*}
      \left| \| N_j^+\| + \| N_j^-\| - \| (\Lambda_j)_\# \DBrack{\Sigma} \| \right|
      &\leq \left| \| N_j^+\| - \| (\Lambda_j)_\# \DBrack{\Sigma_j^+} \| \right|
      + \left| \| N_j^-\| - \| (\Lambda_j)_\# \DBrack{\Sigma \setminus \Sigma_j^+} \| \right| \\
      &\leq 2 \| (\mathcal{P}_{-\tilde \rho_j} \circ \Lambda_{\tilde K_j})_\# \DBrack{\del \tilde K_j \setminus \tilde P_j(\Sigma_j^+)} \| 
    \end{align*}
    thanks to \eqref{eq:Nj-plus} and \eqref{eq:Nj-minus}, and we conclude using \eqref{eq:estimate-extra-mass}.
  \end{proof}
\end{lemma}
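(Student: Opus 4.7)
The plan is to extract two pieces of information from the construction of $N_j^+$ in \eqref{eq:decomp-N}: first, that the pushforward by $\mathcal{P}_{-\tilde\rho_j}$ (an isometry) preserves mass, so that $N_j^+$ has essentially the same total mass as the normal cycle of $\tilde K_j$; and second, that on the ``good'' set $\tilde P_j(\Sigma_j^+) \subset \del \tilde K_j$, the Legendrian Gauss map of $\tilde K_j$ composed with $\mathcal{P}_{-\tilde\rho_j}$ reproduces $\Lambda_j$ exactly, which is the cancellation that makes $N_j^-$ small. The main obstacle will be step (iii) below: once we have split each $N_j^\pm$ into a ``main part'' plus an ``error'', we need to argue that the three total variation measures $\|N_j^+\|$, $\|N_j^-\|$, $\|(\Lambda_j)_\#\DBrack{\Sigma}\|$ actually add up correctly, which requires that the pieces do not overlap in the image space.

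First, since $\mathcal{P}_{-\tilde\rho_j}$ is an isometry of $V_2(\R^4)$, I obtain
\[
  \Mass(N_j^+) = \Mass\bigl((\Lambda_{\tilde K_j})_\# \DBrack{\del \tilde K_j}\bigr) \le \AG(\tilde K_j),
\]
and the first bound in \eqref{eq:decomp-mass} follows immediately from \cref{lem:area-gauss-convex-4pi}. Next, using the identity $\mathcal{P}_{-\tilde\rho_j}\circ\Lambda_{\tilde K_j}\circ\tilde P_j = \Lambda_j$ on $\Sigma_j^+$ (which is a direct verification from the definitions), I split
\[
  N_j^+ = (\Lambda_j)_\# \DBrack{\Sigma_j^+} + E_j,
  \qquad
  E_j := (\mathcal{P}_{-\tilde\rho_j}\circ\Lambda_{\tilde K_j})_\#\DBrack{\del \tilde K_j \setminus \tilde P_j(\Sigma_j^+)}.
\]
Then $N_j^- = -(\Lambda_j)_\# \DBrack{\Sigma \setminus \Sigma_j^+} + E_j$, and \eqref{eq:pushforward-gauss-measure} together with $\mu_j(\Sigma_j^+)\ge 4\pi-\delta_j$ and $\AG(\tilde K_j)\to 4\pi$ give
\[
  \Mass(E_j) \le \mu_{\tilde K_j}(\del \tilde K_j) - \mu_j(\Sigma_j^+) \xrightarrow{j\to\infty} 0.
\]
Combining this with $\mu_j(\Sigma) = \AG(\phi_j) \to 4\pi(1+g)$ yields $\Mass(N_j^-) \le \mu_j(\Sigma\setminus\Sigma_j^+) + o(1) \to 4\pi g$.

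For the total variation convergence, I set $A_j := (\Lambda_j)_\# \DBrack{\Sigma_j^+}$ and $B_j := (\Lambda_j)_\# \DBrack{\Sigma\setminus\Sigma_j^+}$, so that $(\Lambda_j)_\#\DBrack{\Sigma} = A_j + B_j$, $N_j^+ = A_j + E_j$, and $N_j^- = -B_j + E_j$. The crucial observation is that because each $\phi_j$ is an embedding, $\Lambda_j = (\phi_j,\nu_j)$ is injective on $\Sigma$; in particular the supports of $A_j$ and $B_j$ are disjoint, so $\|(\Lambda_j)_\#\DBrack{\Sigma}\| = \|A_j\| + \|B_j\|$ as Radon measures on $V_2(\R^4)$. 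The triangle inequality in total variation then gives
\[
  \bigl|\,\|N_j^+\| - \|A_j\|\,\bigr|(V_2(\R^4)) \le \Mass(E_j),
  \qquad
  \bigl|\,\|N_j^-\| - \|B_j\|\,\bigr|(V_2(\R^4)) \le \Mass(E_j),
\]
so that
\[
  \bigl|\|N_j^+\| + \|N_j^-\| - \|(\Lambda_j)_\#\DBrack{\Sigma}\|\bigr|(V_2(\R^4))
  \le 2\,\Mass(E_j) \xrightarrow{j\to\infty} 0,
\]
which is \eqref{eq:convergence-total-variation}.

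The delicate point — and the reason I singled out step (iii) as the main obstacle — is really the disjoint-support claim for $A_j$ and $B_j$: without the embeddedness of $\phi_j$ one could imagine a point where $\Lambda_j$ identifies a piece of $\Sigma_j^+$ with a piece of its complement, producing mass cancellations in $(\Lambda_j)_\#\DBrack{\Sigma}$ that invalidate the additivity $\|(\Lambda_j)_\#\DBrack{\Sigma}\| = \|A_j\| + \|B_j\|$. This is the only place where embeddedness is used qualitatively in this lemma, and it is what makes the error term $E_j$ truly account for the entire discrepancy in total variation.
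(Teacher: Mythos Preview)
Your proof is correct and follows essentially the same route as the paper: the same splitting $N_j^+ = (\Lambda_j)_\#\DBrack{\Sigma_j^+} + E_j$ and $N_j^- = -(\Lambda_j)_\#\DBrack{\Sigma\setminus\Sigma_j^+} + E_j$, the same use of \cref{lem:area-gauss-convex-4pi} and \eqref{eq:pushforward-gauss-measure} to show $\Mass(E_j)\to 0$, and the same triangle-inequality argument for \eqref{eq:convergence-total-variation}. Your explicit justification of the additivity $\|(\Lambda_j)_\#\DBrack{\Sigma}\| = \|A_j\| + \|B_j\|$ via the injectivity of $\Lambda_j$ (from embeddedness of $\phi_j$) is a welcome clarification that the paper leaves implicit; one tiny imprecision is that the \emph{supports} (being closed) can share boundary points, but what you actually need and have is that the mass measures live on the disjoint images $\Lambda_j(\Sigma_j^+)$ and $\Lambda_j(\Sigma\setminus\Sigma_j^+)$.
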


\subsection{Proof of the compactness theorems}

We are finally ready to prove the main theorems of this paper.

\begin{proof}[Proof of \cref{thm:gauss-main-legendrian}]
  Recall that $[\pi_\# (\Lambda_j)_\# \DBrack{\Sigma}] = [(G_j)_\# \DBrack{\Sigma}] = (1-g, 1-g) \in \Z^2 \cong H_2(\Grass)$ (\cref{prop:degree-gauss-map}). Since the sets $\del \tilde K_j$ are spheres, the currents associated to their Gauss maps have $[\pi_\# N_j^+] = (1, 1)$, and therefore $[\pi_\# N_j^-] = (g, g)$ in $H_2(\Grass)$. Since $\Mass(N_j^+)$ and $\Mass(N_j^-)$ are uniformly bounded, after passing to another subsequence if necessary we have that
  \begin{equation}
    \label{eq:weak-limit-gauss}
    N_j^+ \weakto N^+
    \qquad \text{and} \qquad
    N_j^- \weakto N^-
  \end{equation}
  for Legendrian integral cycles $N^\pm$ in $V_2(\R^4)$. Moreover the Lagrangian cycles $T^\pm := \pi_\# N^\pm$ satisfy $[T^+] = (1, 1)$ and $[T^-] = (g, g)$ and, by \eqref{eq:decomp-mass} and the lower semicontinuity of the mass,
  \begin{equation}
    \label{eq:lsc-mass}
    \Mass(T^+) \leq \Mass(N^+) \leq 4\pi
    \qquad \text{and} \qquad
    \Mass(T^-) \leq \Mass(N^-) \leq 4\pi g.
  \end{equation}
It is well known that all of these homology classes in $\S^2_+(1/\sqrt{2}) \times \S^2_-(1/\sqrt{2})$ are calibrated by the K\"ahler form $\omega_+ + \omega_-$, therefore if $T$ is an integral cycle in the homology class $(k, k)$, for $k \geq 0$,
\[
  \Mass(T)
  \geq \langle T, \omega_+ + \omega_- \rangle
  = k \left( \int_{\S^2_+} \omega_+ + \int_{\S^2_-} \omega_- \right)
  = k \left( 2\pi + 2\pi \right) = 4\pi k
\]
with equality if and only if $T$ is holomorphic. Thus \eqref{eq:lsc-mass} is an equality and $T^+$ and $T^-$ are holomorphic $2$-dimensional integral currents. Now it is well known (see for example \cite{riviere-tian-annals} for a much more general statement) that $T^+$ and $T^-$ are represented by holomorphic curves with at most finitely many singularities $(z_1, w_1), \cdots, (z_p, w_p)$. 

For simplicity we will omit the factor $1/\sqrt{2}$ and the isomorphism $I$ from \eqref{eq:isometry-I}. On small open sets $U \subset \S^2_+$ away from $\{ z_1, \ldots, z_p \}$ there exist holomorphic functions $u_0, u_1, \ldots, u_g : U \to \S^2_- \cong \CP^1$ such that
\[
  T^+ \res U \times \S^2_- = \DBrack{\graph(u_0|_U)}
\]
and
\[
  T^- \res U \times \S^2_- = \DBrack{\graph(u_1|_U)} + \cdots + \DBrack{\graph(u_g|_U)}.
\]
At any point $z \in U$, the differentials $\Dd u_k : \C \cong T_z\S^2_+ \to \C \cong T_{u_k(z)}\S^2_-$ are complex-linear, and by the Lagrangian condition on $T^\pm$ we have that $(\id, u_k)^*(\omega_+, -\omega_-) = 0$, so that $\det_\R \Dd u_k = \det_\R \id = 1$. Hence $\Dd u_k \in \mathsf{U}(1) = \SO(2)$, which means that $u$ is locally an orientation-preserving isometry, and therefore there exists $Q_k \in \SO(3)$ such that $u_k(z) = Q_k z$ for all $z \in U$.

We can repeat this argument on each open set away from $\{z_1, \ldots, z_p\}$ and, by unique continuation for holomorphic functions, get that the rotations $Q_k$ agree globally. This gives that
\[
  T^+ = \DBrack{\graph(Q_0)}
  \qquad \text{and} \qquad
  T^- = \DBrack{\graph(Q_1)} + \cdots + \DBrack{\graph(Q_g)}.
\]
For each $0 \leq k \leq g$, choose $p_k \in \S^3$ such that $Q_k = R_{p_k}$, and using \cref{prop:graph-rotation-plane} write $\graph(R_{p_k}) = \{ p_k \wedge v : v \in \S^3 \cap p_k^\perp \}$. \cref{lem:legendrian-lift} below with $U = \Grass$ and $\Gamma_{p_0}$ gives immediately that
\[
  N^+ = \DBrack{\{ (p_0 \cos \theta_0 + z \sin \theta_0, -p_0 \sin \theta_0 + z \cos \theta_0) : z \in \S^3 \cap p_0^\perp \}}
\]
for some $\theta_0 \in [0, \pi]$. For $N^-$, assume (by reordering the points) that $\{p_1, \ldots, p_{g'}\}$ are all the distinct points that appear in $\{ p_1, \ldots, p_g \}$, with $1 \leq g' \leq g$, and apply \cref{lem:legendrian-lift} to $N^-$ with $\Gamma_{p_k}$ and $U = \Grass \setminus \cup_{1 \leq i \leq g', i \neq k} \Gamma_{p_i}$ (notice that $U \cap \Gamma_{p_k}$ is just a sphere with a finite set of points removed). This gives that
  \[
    N^- \res \pi^{-1}(\Gamma_{p_k})
    = N^- \res \pi^{-1}(U)
    = \sum_{i=1}^{m_k} \DBrack{\{ (p_k \cos \theta_{k,i} + z \sin \theta_{k,i}, -p_k \sin \theta_{k,i} + z \cos \theta_{k,i}) : z \in \S^3 \cap p_k^\perp \}},
  \]
  because
  \[
    \Mass(N^- \res \pi^{-1}(U \setminus \Gamma_{p_k}))
    = \Mass(N^- \res \pi^{-1}(\Grass \setminus \cup_{1 \leq i \leq g'} \Gamma_{p_i}))
    = \Mass(T^- \res \Grass \setminus \cup_{1 \leq i \leq g'} \Gamma_{p_i})
    = 0
  \]
  thanks to the equality in \eqref{eq:lsc-mass}. Since $\Gamma_{p_i} \cap \Gamma_{p_k}$ consists of just two points for $1 \leq i < k \leq g'$, we may add up these contributions and obtain
  \[
    N^-
    = \sum_{k=1}^{g'} N^- \res \pi^{-1}(\Gamma_{p_k})
    = \sum_{i=1}^{g} \DBrack{\{ (p_i \cos \theta_{i} + z \sin \theta_{i}, -p_i \sin \theta_{i} + z \cos \theta_{i}) : z \in \S^3 \cap p_i^\perp \}}
  \]
  for some $\theta_1, \ldots, \theta_g \in [0, \pi]$.

  We also have that $\| N_j^\pm \| \weakto \| N^\pm \|$, because no mass is lost at the limit, and therefore, denoting by $\Pi : V_2(\R^4) \subset \S^3 \times \S^3 \to \S^3$ the projection onto the first factor, also $\Pi_\# \| N_j^\pm \| \weakto \Pi_\# \| N^\pm \|$ as measures on $\S^3$. But using \eqref{eq:convergence-total-variation} and the fact that the supports of $\Pi_\# \| (\Lambda_j)_\# \DBrack{\Sigma}\|$ converge in the Hausdorff distance to $S$, it is clear that $\Pi_\# (\|N^+\| + \|N^-\|)$ must be supported in $S$ and therefore the spheres $\del B_{\theta_i}(p_i)$ are either points or equal to $S$.

  In view of \eqref{eq:convergence-total-variation}, to finish the proof of \cref{thm:gauss-main-legendrian} we just need to show that $\del B_{\theta_0}(p_0) = S$ and $\theta_i = 0$ for $i \geq 1$. We will therefore assume that $S$ is nondegenerate---otherwise there is nothing to prove. It is easy to see that $\spt \Pi_\# \| N^+ \| = \del B_{\theta_0}(p_0)$ must be equal to $S$.
  Now we claim that all the other spheres must be points, or equivalently, that if we write
  \[
    \Pi_\# N^- = q \DBrack{S}
  \]
  for $q \in \N$, then $q$ must be zero. Since
  \[
    (\phi_j)_\# \DBrack{\Sigma}
    = \Pi_\# (\Lambda_j)_\# \DBrack{\Sigma}
    = \Pi_\# N_j^+ - \Pi_\# N_j^-
    \weakto (1-q) \DBrack{S},
  \]
  it is enough to show that $\phi_j(\Sigma)$ lies in the same homology class as $S$ in a tubular neighborhood $B_\eps(S)$, for $\eps > 0$ small. But this is easy: for example, take a point $x_j \in \Sigma$ such that $\phi_j(\Sigma)$ is contained in $\S^3 \setminus (B^+ \cup B^-)$ for two balls $B^\pm$ that touch at $\phi_j(x_j)$ and such that $\S^3 \setminus (B^+ \cup B^-) \subset B_\eps(S)$, as in the proof of \cref{cor:convergence-round-sphere}. Then observe that, if $\sigma : \S^3 \setminus (B^+ \cup B^-) \to \del B^+$ denotes the nearest point projection, $\sigma$ is smooth and $\phi_j(x_j)$ is a regular point and the only preimage of itself, thus $\sigma \circ \phi_j$ has degree one and the claim follows.
\end{proof}

\begin{lemma}
  \label{lem:legendrian-lift}
  Suppose that $U \subset \Grass$ is an open set, $N$ is a Legendrian integral current in $V_2(\R^4)$ such that $(\del N) \res \pi^{-1}(U) = 0$ and $\spt (\pi_\# N) \cap U \subset \Gamma_p$, where $\Gamma_p = \{ p \wedge v : v \in \S^3 \cap p^\perp \} \subset \Grass$ for some $p \in \S^3$. Assume moreover that $U \cap \Gamma_p$ is connected. Then there exist $m \in \N$ and angles $\theta_1, \ldots, \theta_m \in [0, \pi]$ such that
  \[
    N \res \pi^{-1}(U) = \sum_{i=1}^m \DBrack{\{ (p \cos \theta_i + z \sin \theta_i, -p \sin \theta_i + z \cos \theta_i) : z \in \S^3 \cap p^\perp \}} \res \pi^{-1}(U).
  \]
  \begin{proof}
    We define the function $f : V_2(\R^4) \to \R$ by $f(a, b) = \langle a, p \rangle$. Since $N$ is Legendrian, $\langle \nabla^N a, b \rangle = 0$ $\|N\|$-almost everywhere, and also $\langle \nabla^N a, a \rangle = 0$ since $|a| = 1$. On the other hand, for every $(a, b) \in \spt N \cap \pi^{-1}(U) \subset \pi^{-1}(\Gamma_p)$ we have that $a \wedge b = p \wedge v$ for some $v \in \S^3 \cap p^\perp$, thus $p$ is a linear combination of $a$ and $b$ and it follows that $\nabla^N f = \langle \nabla^N a, p \rangle = 0$.

    For simplicity identify $p \wedge v \in \Gamma_p$ with $v \in \S^2$. We will now use a standard slicing technique: let $\alpha$ be a $1$-form on $\Gamma_p$ compactly supported in $U \cap \Gamma_p$ and $\psi \in C^\infty_c(\R)$, and consider the $0$-dimensional slices $\langle N, \pi, v \rangle$ for $v \in \S^2$, supported in the circle $\pi^{-1}(p \wedge v)$. We have that
    \begin{align*}
      \int \langle N, \pi, v \rangle(f^*\psi) \, \dd \alpha(v)
      &= N\left( (f^*\psi) \wedge \dd \pi^* \alpha \right)
      = -N\left( \dd ((f^*\psi) \wedge \pi^* \alpha) \right) + N\left( \dd (\psi \circ f) \wedge \pi^* \alpha) \right) \\
      &= -\del N\left( (f^*\psi) \wedge \pi^* \alpha \right)
      = 0
    \end{align*}
    because $N$ has no boundary in $\pi^{-1}(U)$ and $\nabla^N f \equiv 0$. This means that
    \[
      \int \dd \left( \langle N, \pi, v \rangle(f^*\psi) \right) \wedge \alpha(v)
      = \int \langle N, \pi, v \rangle(f^*\psi) \, \dd \alpha(v)
      = 0
    \]
    for every $\alpha$ and therefore, since $U \cap \Gamma_p$ is connected, $f_\# \langle N, \pi, v \rangle(\psi) = \langle N, \pi, v \rangle(f^*\psi)$ is constant (up to a set of measure zero) for each $\psi$ and hence the $0$-current $f_\# \langle N, \pi, v \rangle$ is independent of $v$. If we write $\langle N, \pi, v \rangle = \sum_{i=1}^{m(v)} \DBrack{(p \cos \theta_i(v) + v \sin \theta_i(v), -p \sin \theta_i(v) + v \cos \theta_i(v))}$, then $f_\# \langle N, \pi, v \rangle = \sum_{i=1}^{m(v)} \DBrack{\cos \theta_i(v)}$ and we deduce that $m$ is a constant and the $\theta_i$ are constants too (almost everywhere).
    The lemma follows since the vertical slices characterize a Legendrian current completely: any $2$-form $\beta$ on $\pi^{-1}(U) = \{ (p \cos \theta + v \sin \theta, -p \sin \theta + v \cos \theta) : v \in U, \theta \in \S^1 \}$ can be written in local coordinates as $h(v, \theta) \dd v^1 \wedge \dd v^2 + g_1(v, \theta) \dd v^1 \wedge \dd \theta + g_2(v, \theta) \dd v^2 \wedge \dd \theta$ and then, since $\nabla^N \theta = 0$,
    \[
      \pushQED{\qed}
      N (\beta)
      = N (h(v, \theta) \dd v^1 \wedge \dd v^2)
      = \int_U \langle N, \pi, v \rangle (h(v, \cdot)) \, \dd v.
      \qedhere
    \]
  \end{proof}
\end{lemma}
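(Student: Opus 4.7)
The plan is to exploit the fact that every fiber of $\pi : V_2(\R^4) \to \Grass$ above a point of $\Gamma_p$ is a circle naturally parametrized by an angle $\theta$, and to show, using a slicing argument and the Legendrian condition on $N$, that the slices of $N$ by $\pi$ are supported on a fixed finite set of values $\{\theta_1, \ldots, \theta_m\}$ independent of the base point in $\Gamma_p \cap U$.

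First I would introduce the auxiliary function $f : V_2(\R^4) \to \R$ given by $f(a,b) = \langle a, p \rangle$, so that on $\pi^{-1}(\Gamma_p)$ one has $f = \cos \theta$, where $\theta$ is the angle parametrizing the circle fiber. The key observation is that $\nabla^N f = 0$ $\|N\|$-almost everywhere on $\pi^{-1}(U)$. Indeed, the Legendrian condition $\langle \nabla^N a, b \rangle = 0$ combined with $\langle \nabla^N a, a \rangle = 0$ (from $|a|=1$) forces $\nabla^N a$ to lie in $\mathrm{span}(a,b)^\perp$; but on $\pi^{-1}(\Gamma_p)$ the vector $p$ lies in $\mathrm{span}(a,b)$, hence $\nabla^N f = \langle \nabla^N a, p \rangle = 0$.

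Next I would slice $N$ by $\pi$: for $\|\pi_\# N\|$-a.e.\ $v \in \Gamma_p \cap U$, the slice $\langle N, \pi, v \rangle$ is a $0$-dimensional integer rectifiable current supported in the circle $\pi^{-1}(p \wedge v)$. Given any $1$-form $\alpha$ on $\Gamma_p$ compactly supported in $U \cap \Gamma_p$ and any test function $\psi \in C^\infty_c(\R)$, the computation
\[
  \int \langle N, \pi, v \rangle(f^*\psi) \, \dd \alpha(v)
  = N\bigl( (f^*\psi) \wedge \dd(\pi^*\alpha) \bigr)
  = -\del N\bigl((f^*\psi) \wedge \pi^*\alpha\bigr) + N\bigl( \dd(f^*\psi) \wedge \pi^*\alpha \bigr) = 0
\]
vanishes because $\del N \res \pi^{-1}(U) = 0$ and $\nabla^N f = 0$. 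By connectedness of $U \cap \Gamma_p$, this forces the $0$-current $v \mapsto f_\# \langle N, \pi, v \rangle$ to be constant on a set of full measure. Writing each slice as a finite sum of integer-weighted points and noting that $f$ restricted to a half-circle fiber is a homeomorphism onto $[-1,1]$, the constancy of the pushforward under $f$ forces the multiplicity $m$ and the angles $\theta_1, \ldots, \theta_m \in [0,\pi]$ to be constants.

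Finally I would reconstruct $N \res \pi^{-1}(U)$ from these slices. Any $2$-form on $\pi^{-1}(U)$ can be written in coordinates $(v, \theta)$ as a sum of a ``horizontal'' piece $h(v,\theta)\, \dd v^1 \wedge \dd v^2$ and a ``vertical'' piece involving $\dd\theta$; the vertical piece pairs trivially with a Legendrian current since $\nabla^N \theta = 0$, and the horizontal piece is recovered by integration of the slice pushforwards. This yields the explicit formula for $N \res \pi^{-1}(U)$ in the statement. I expect the only delicate step to be the slicing identity above: one must verify that $\dd(f^*\psi) \wedge \pi^*\alpha$ indeed vanishes against $N$, which reduces to $\nabla^N f = 0$ paired with the fact that $\pi^*\alpha$ is horizontal.
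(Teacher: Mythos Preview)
Your proposal is correct and follows essentially the same approach as the paper's proof: the same auxiliary function $f(a,b) = \langle a,p\rangle$, the same use of the Legendrian condition to get $\nabla^N f = 0$, the same slicing computation to show $f_\# \langle N,\pi,v\rangle$ is constant, and the same reconstruction of $N$ from its horizontal slices using $\nabla^N \theta = 0$.
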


\begin{proof}[Proof of \cref{thm:gauss-main}]
  First we obtain a subsequence, a sphere $S$ and points $p_0, p_1, \ldots, p_g$ satisfying the conclusions of \cref{thm:gauss-main-legendrian}. Now \eqref{eq:convergence-cycles-1} and \eqref{eq:convergence-cycles-2} are clear from \eqref{eq:convergence-legendrian-cycles}, and by applying $\Pi_\#$ on \eqref{eq:convergence-associated-measures} we obtain
  \[
    \gamma_j
    = \Pi_\# \| (\Lambda_j)_\# \DBrack{\Sigma} \|
    \xrightharpoonup{\; j \to \infty \;} \Pi_\# (\Haus^2 \res \Gamma_{p_0}) + \sum_{k=1}^g \Pi_\# (\Haus^2 \res \Gamma_{p_k})
    = \gamma_S + \sum_{k=1}^g 4\pi \delta_{p_k}
  \]
  as measures on $\S^3$, which is \eqref{eq:convergence-measures-s3}. The varifold convergence \eqref{eq:convergence-varifolds} comes from the separate varifold convergence of
  \[
    \mathbf{v}(\pi_\# N_j^+) \weakto \mathbf{v}(\Gamma_{p_0})
    \qquad \text{and} \qquad
    \mathbf{v}(\pi_\# N_j^-) \weakto \mathbf{v}(\Gamma_{p_1}) + \cdots + \mathbf{v}(\Gamma_{p_g})
  \]
  (which can be justified, for example, using \eqref{eq:convergence-legendrian-cycles}, the convergence of the total masses of $\pi_\# N_j^\pm$, and Reshetnyak's continuity theorem~\cite[Theorem~10.3]{rindler-book} treating currents as vector-valued measures), together with \eqref{eq:convergence-total-variation}.
\end{proof}
\printbibliography

\end{document}